\documentclass[a4paper,12pt]{amsart}

\usepackage{amssymb,xspace,amscd,yhmath,mathdots,txfonts,
mathrsfs, youngtab,stmaryrd,float,accents}
\usepackage{array}
\usepackage{dynkin-diagrams}
\usepackage[matrix,arrow,curve]{xy}\CompileMatrices
\usepackage{adjustbox}
\usepackage{graphicx}
\usepackage{hyperref}
\usepackage{changepage}
\usepackage{yfonts}[1998/10/03]
\usepackage{float}

\DeclareMathAlphabet{\mathpzc}{OT1}{pzc}{m}{it}
\numberwithin{equation}{section}

\theoremstyle{plain}

\newtheorem{thm}{Theorem}[section]

\newtheorem{lem}[thm]{Lemma}

\newtheorem{cor}[thm]{Corollary}

\newtheorem{prop}[thm]{Proposition}

\theoremstyle{definition}
\newtheorem{dfn}{Definition}[section]
\newtheorem{ntz}{Notation}[section]

\newtheorem{exam}[thm]{Example}
\newtheorem{nnott}{Notation}[section]

\newtheorem{rmk}[thm]{Remark}

\DeclareMathAlphabet{\mathpzc}{OT1}{pzc}{m}{it}

\DeclareMathOperator{\R}{{\mathbb{R}}}

\DeclareMathOperator{\kt}{{\kappaup}}

\DeclareMathOperator{\Gf}{\mathbf{G}}

\DeclareMathOperator{\supp}{\mathrm{supp}}

\DeclareMathOperator{\Ad}{\mathrm{Ad}}

\DeclareMathOperator{\Z}{\mathbb{Z}}

\DeclareMathOperator{\at}{\mathfrak{a}}
\DeclareMathOperator{\pt}{\mathfrak{p}}
\DeclareMathOperator{\ft}{\mathfrak{f}}

\DeclareMathOperator{\re}{{\mathrm{Re}}}

\DeclareMathOperator{\C}{\mathbb{C}}

\newcommand\Qf{\mathbf{Q}}

\newcommand\qt{\mathfrak{q}}

\newcommand\gt{\mathfrak{g}}

\newcommand\Rad{\mathpzc{R}}

\newcommand\hg{\mathfrak{h}}

\newcommand\Hq{\mathpzc{H}}
\newcommand\Fq{\mathpzc{F}}
\newcommand\Tq{\mathpzc{T}}

\newcommand\spt{\mathfrak{sp}}

\newcommand\Jd{\mathrm{J}}

\newcommand\SU{\mathbf{SU}}

\newcommand\Id{\mathrm{Id}}

\newcommand\Ci{\mathcal{C}^\infty}

\newcommand\slt{\mathfrak{sl}}
\newcommand\su{\mathfrak{su}}

\newcommand\sfE{\textsf{E}}

\newcommand\T{\mathrm{T}}

\newcommand\so{\mathfrak{so}}

\newcommand\Hb{\mathbb{H}}
\newcommand\Qc{\mathpzc{Q}}

\newcommand{\F}{\mathrm{F}}

\newcommand\epi{\varepsilon}

\newcommand\ktt{\texttt{k}}

\newcommand\Ta{\textsf{T}}
\newcommand\Ha{\textsf{H}}
\newcommand\sfM{\textsf{M}}
\newcommand\sfF{\textsf{F}}

\newcommand\e{\texttt{e}}

\newcommand\gs{\mathfrak{g}^{\sigmaup}}

\newcommand\Bz{\mathcal{B}}

\newcommand\Fc{\mathcal{F}}

\newcommand\hs{\mathfrak{h}^{\sigmaup}}

\newcommand\utilde[1]{\underaccent{\tilde}#1}
\newcommand\de{\textswab{d}_{e}}


   \def\DHLhksqrt#1#2{\setbox0=\hbox{$#1\sqrt{#2\,}$}\dimen0=\ht0
     \advance\dimen0-0.2\ht0
     \setbox2=\hbox{\vrule height\ht0 depth -\dimen0}%
     {\box0\lower0.4pt\box2}}
\hyphenation{semi-sim-ple}

\title{On finitely nondegenerate closed homogeneous 
 CR  manifolds}
\author{S.Marini, C.Medori, M.Nacinovich}
\address{Stefano Marini: Dipartimento di Scienze Matematiche, Fisiche e 
Informatiche\\ Universit\`a di Parma\\ Parco Area delle Scienze 53/A 
(Campus), 43124 Parma
 (Italy)} \email{stefano.marini@unipr.it}

\address{Costantino Medori:
Dipartimento di Scienze Matematiche, Fisiche e Informatiche\\ Universit\`a 
di Parma\\ Parco Area delle Scienze 53/A (Campus), 43124 Parma
 (Italy)} \email{costantino.medori@unipr.it}

\address{Mauro Nacinovich:
Dipartimento di Matematica\\ II Universit\`a di Roma
``Tor Ver\-ga\-ta''\\ Via della Ricerca Scientifica\\ 00133 Roma
(Italy)}
\email{nacinovi@mat.uniroma2.it}

\subjclass[2000]{Primary: 32V35, 32V40,
Secondary:  17B22, 17B10 }
\keywords{Equivalence problem, CR,  Levi nondegenerate,  real orbits, 
simple model}

\date\today

\begin{document}

\thanks{
This work has been financially supported by the Programme “FIL-Quota 
Incentivante” of University of Parma, co-sponsored by Fondazione 
Cariparma, by the PRIN project ``Real and Complex Manifolds: Topology, 
Geometry and holomorphic dynamics''- CUP D94I19000800001, and by 
the group G.N.S.A.G.A. of I.N.d.A.M.} 

\maketitle 
\begin{abstract}
A complex flag manifold $\sfF{=}\Gf/\Qf$ decomposes into finitely many  real orbits 
under the action of a real form 
$\Gf^\sigmaup$ of $\Gf$.  Their embedding into $\sfF$ 
define on them CR manifold structures.  We characterize the closed real orbits which are finitely nondegenerate.

\end{abstract}
\tableofcontents

\section*{Introduction}

In order to solve the biholomorphic 
equivalence problem for smooth complex domains,  
one needs to study  biholomorphic invariants of their boundaries.
These are  \emph{closed} smooth hypersurfaces
and are the codimension one 
prototypes of 
the more general objects that
we call now \emph{Cauchy Riemann} (or CR) \emph{manifolds}.   \par

Within specific classes of CR manifolds one aims at determining 
complete systems of geometric invariants characterizing equivalence. 
An important step 
in this context is investigating  
transformation groups 
preserving these invariants.

\par
In this paper we will consider CR manifolds which are orbits of real forms in complex flag manifolds
(cf. e.g. \cite{Wo1969}).\par
Let  $\Gf$ be a complex semisimple Lie group,  $\Qf$ a parabolic subgroup and $\Gf^{\sigmaup}$ a real form of $\Gf$.  
The complex flag manifold $\sfF{=}\Gf/\Qf$ decomposes 
into finitely many $\Gf^{\sigmaup}$-orbits; among them there exists 
exactly one 
of minimal dimension, which is closed and hence compact 
(see  \cite{AlMeNa2008} for details on its topology).   
Their embedding into $\sfF$ yields natural  CR-manifold structures. 
\par
An important characteristic of a  CR  manifold is the degeneracy/non\-de\-gen\-er\-acy of its Levi form. 
This is  a vector valued 
sesquilinear form which,  roughly speaking, measures the \textit{non-integrability}
of its  CR  distribution.  In several instances this form has a nontrivial kernel and, 
by considering its higher order iterations, we are lead to   
weaker notions of nondegeneracy (see e.g. \cite{Fe2007,Sa2020}). 
This leads to a notion of \emph{Levi order}, 
expressed by 
 a positive integer $\ktt$ (degeneracy corresponding to $\ktt\,{=}\,{+}\infty$).  
 In \cite {MaMeNa2021} we found the upper bound $\ktt{=}3$ 
 for the finitely nondegenerate orbits of real forms in complex flag manifolds,  
 which drops to $2$ when restricting to the closed ones
(or, more generally, to those of \textit{minimal type}). \par
A fundamental tool, while studying homogeneous  CR  manifolds, 
is employing the notion of CR  algebra introduced in \cite{MeNa2005} (see \S\ref{s2} below).  
In this note we compute the Levi order 
of minimal 
orbits,  by using the description 
of their  CR  algebras given by 
cross-marked Satake diagrams,  improving and completing 
results 
of \cite{Fe2007}.

\par
Other interesting classes of homogeneous CR manifolds were considered by many Authors,  as those of hypersurface type (see  e.g. \cite{DuMeTh2022,MeSp2012, Sa2020, SyZe2022}),  or those  maximally symmetric simple with associated symbol of lenght $2$ 
(see  e.g. \cite{Gr2021}),  or those which are reductive compact 
(see e.g. \cite{AlMeNa2013}).
\par
\vspace{3pt}
The  paper is structured as follows.   In \S\ref{s1} we summarize basic notions 
of  CR  geometry and, for later use,  we provide
Lemma~\ref{l1.3},  which states that 
a CR manifold admitting a complex CR fibration cannot be 
 finitely nondegenerate. 
In \S\ref{s2}
we 
focus on
homogeneous  CR  manifolds; the notions of the previous section 
are rewritten 
 in terms of \textit{ CR  algebras}: these objects are pairs
  $(\gt^{\sigmaup},\qt_{\phiup})$,  
  consiting 
  of a real Lie algebra 
 $\gt^{\sigmaup}$,
 whose elements are infinitesimal  
  CR  automorphism of the manifold,
  and a complex Lie subalgebra 
 $\qt_{\phiup}$ of its complexification
$\gt\,{\doteq}\C{\otimes}\gt^{\sigmaup}$, which encodes its $\Gf^{\sigmaup}$-invariant  CR  structure.  
 In \S\ref{s3} we describe the CR algebras of 
 orbits of real forms in complex flag manifolds,  concentrating upon the unique closed one (cf. e.g.~\cite{AlMe2006}).  
 In \S\ref{s4} we characterize  
 those which are finitely nondegenerates (cf. \cite[Sec.5]{Fe2007}),  
 and, adapting a more general result \cite[Th.2.11]{MaMeNa2021}, 
 we show that   $2$ 
 is an upper bound 
 for the length of the chain of kernels of iterated Levi's forms.
 In \S\ref{s5} we describe 
 canonical CR  fibrations in terms of the subsets of simple roots 
  $\phiup$ employed to
 define the parabolic  CR  algebra $(\gt^{\sigmaup},\qt_{\phiup})$.
In \S\ref{s6}, by using the results of the previous 
sections, 
 we give a complete classification of all closed simple homogeneous  CR  manifolds 
 which are $1$-  and $2$-nondegenerate.
 The final classification is summarized in  Table ~\ref{finite nnndg} and Table ~\ref{finite nnndg ex}.

\section{Preliminaries on  CR  manifolds}\label{s1}

A   CR  manifold of type $(n,d)$ is
a pair $(\sfM, \T^{0,1}\sfM)$,  consisting of a 
smooth manifold $\sfM$ of real dimension $2n{+}d,$ and a rank $n$ 
smooth complex  subbundle
$\T^{0,1}\sfM$ of its complexified tangent bundle $\T^{\C}\sfM{\doteq}\C\,{\otimes}\,\T{M},$ satisfying 
\begin{enumerate}
\item  $\T^{0,1}\sfM\cap\overline{\T^{0,1}\sfM}=\{0\}$;
\item  $[\Gamma^{\infty}(\sfM,\T^{0,1}\sfM),\Gamma^{\infty}(\sfM,\T^{0,1}\sfM)]\subseteq
 \Gamma^{\infty}(\sfM,\T^{0,1}\sfM).$
\end{enumerate}
The integers $n,d$ are its \emph{CR-dimension} and  \emph{CR-codimension}, respectively, and
$(2)$ is the  \emph{ formal integrability} condition.
\begin{ntz}
We use the following notation:
\begin{itemize}
\item $\T^{1,0}\sfM\doteq\overline{\T^{0,1}\sfM}$;
\item $\Ha^{\C}\sfM\doteq \T^{1,0}\sfM\oplus \T^{0,1}\sfM$;
\item $\Ha\sfM\doteq \Ha^{\C}\sfM\cap \T\sfM.$
\end{itemize}
\end{ntz}
The rank $2n$ real subbundle $\Ha\sfM$ of $\T\sfM$ is the \emph{real contact distribution}
underlying the  CR  structure of $\sfM.$ Condition (1) above 
defines a 
smooth complex structure $\Jd_{\sfM}$  on the fibers of $\Ha\sfM$ by
\begin{equation}\label{CRstructure}
\T^{0,1}\sfM{=}\{X+i\,\Jd_{\sfM} {X}\mid X
\in\Ha\sfM\}.
\end{equation}
The map $\Jd_{\sfM}$ squares to $-\Id_{\Ha\sfM}$ and is called the \emph{partial complex structure}
of~$\sfM.$ 
An equivalent definition of the  CR  structure can be given by assigning first an even dimensional
distribution $\Hq$ and then a smooth partial complex structure $\Jd_{\sfM}$ on $\Hq$ 
in such a way that
the complex distribution \eqref{CRstructure} satisfies the integrability condition  $(2)$. \par 

\begin{dfn} 

A   CR-manifold $(\sfM, \Ha\sfM, \Jd_{\sfM})$ is \emph{(locally) embeddable} if admits a  (local)  
 CR-embedding into a complex manifold  $(\hat{\sfM},\Jd)$,   
 i.e.    $\sfM\,{\hookrightarrow}\,\hat{\sfM}$ is a smooth (local) embedding and 
 $\Ha_x\sfM  = \T_x\sfM \cap\Jd \T_x\sfM$,  $\Jd_{\sfM}=\Jd|_{\Ha\sfM}$.
 
\end{dfn}
Every real-analytic   CR-manifold $(\sfM, \Ha\sfM, \Jd_{\sfM})$ admits a   
CR-embedding into a complex manifold  $(\hat{\sfM},\Jd)$  (see 
\cite{AnFr1979}).
\begin{ntz}
We denote by $\Hq$ (resp. $\Tq,$ $\Tq^{\C},$ $\Hq^{\C}$, $\Tq^{0,1},$ $\Tq^{1,0}$)
the sheaf of germs of smooth sections of $\Ha\sfM$ 
(resp. $\Ta\sfM,$ $\Ta^{\C}\sfM,$ $\Ha^{\C}\sfM,$ $\Ta^{0,1}\sfM,$ $\Ta^{1,0}\sfM$).
\end{ntz} 
\begin{dfn}\label{fundamental}
 A  CR  manifold $\sfM$ is called \emph{fundamental} at its point $x$ 
  if $\Hq_{x}$
generates under 
Lie brackets the Lie algebra $\Tq_{x}.$ 
\end{dfn}
The \emph{Levi form} of $(\sfM,\T^{0,1}\sfM)$ 
at  the point $x\in\sfM$ is the Hermitian symmetric map
\begin{equation}\label{e1.2}
{\mathcal{L}_x}:\T_{x}^{0,1}\sfM\otimes \T_{x}^{1,0}\sfM\rightarrow 
\T_x^{\C}\sfM/ \Ha_{x}^{\C}\sfM\,
\end{equation}
defined by 
\begin{equation}\label{e1.3}
{\mathcal{L}_x}(Z,\overline{W})= \frac{1}{2i}\,
\piup ([\utilde{Z},\overline{\utilde{W}}]_{x} ),\;\;\;\forall Z,W\in\T_{x}^{0,1}\sfM,
\end{equation}
where $\piup$ is the canonical projection 
$\T_{x}^{\C}\sfM\rightarrow T_{x}^{\C}\sfM/ \Ha^{\C}_{x}\sfM,$ and  
$\utilde{Z},\utilde{W}\in \Tq_x^{0,1}$ are smooth germs
with $\utilde{Z}_{x}\,{=}\,Z$,  $\utilde{W}_{x}\,{=}\,W$.
\par 
The \emph{Levi kernel} at $x\in M $ is the null space of 
the Levi form
\begin{equation*}
\F_{x}\sfM
=\{Z\in \T_{x}^{0,1}\sfM
\,|\,\mathcal{L}_x(Z,\overline{W})=0, \ \forall \, {W}\in \T_x^{0,1}\sfM \}.
\end{equation*}
\par
When the Levi kernel is not trivial, one needs to consider \emph{higher order Levi forms}.
A formulation analogous to \eqref{e1.2}, \eqref{e1.3} would involve jet bundles.
We may simplify the argument by considering 
germs of smooth sections. \par
For each 
point $x\in\sfM$  and any germ $\utilde{Z}\,{\in}\Tq^{0,1}_{x}$ with $\utilde{Z}_{x}{\neq}0$,
we can take the infimum $\ktt(x,\utilde{Z})$ of the set of positive integers $k$ for which 
there are germs $\utilde{Z}_{1},\hdots,\utilde{Z}_{k}\,{\in}\,\Tq^{0,1}_{x}$ such that\footnote{$\ktt(x,\utilde{Z})={+}\infty$
if there is not such an integer $k$.}
\begin{equation} \label{e1.4}
 [\utilde{Z}_{1},[\utilde{Z}_{2},\hdots,[\utilde{Z}_{k},\bar{\utilde{Z}}]]]_{x}\notin \Ha^{\C}_{x}\sfM.
 \end{equation}
 \par
 The \emph{nondegeneracy} of the higher order Levi form is measured by 
\begin{equation}
 \ktt_{x}=\sup\{\ktt(x,\utilde{Z})\mid \utilde{Z}\in\Tq^{0,1}_{x},\; \utilde{Z}_{p}{\neq}0\}.
\end{equation}
\par 
To investigate the degeneracy/nondegeneracy of the Levi form, 
Freeman in \cite[Thm.3.1]{Fr1977} defined 
a nested sequence of sheaves of germs of
smooth complex valued   vector fields on $\sfM$ 

\begin{equation}\label{Freeman sequence}
 \Fq^{(0)}\supseteq\Fq^{(1)}\supseteq\cdots\supseteq\Fq^{(k)}
 \supseteq\Fq^{(k+1)}\supseteq\cdots
\end{equation}
by setting 
\begin{equation} 
\begin{cases}
 \Fq^{(0)}{=}\Tq^{0,1},\\[3pt]
 \Fq^{(k)}{=}{\bigsqcup}_{{x\in\sfM}}\left\{\utilde{Z}\in{\Fq_{x}^{(k-1)}}\left| \; [\utilde{Z},\Tq_{x}^{1,0}]
 {\subseteq}\Fq_{x}^{(k-1)}\oplus\Tq^{1,0}_{x}\right\}\right., 
 \;\text{for $k{\geq}1.$}
\end{cases}
\end{equation}\par
The $k$-th order  Levi form  may be defined by the map 
\begin{equation}
{\mathcal{L}^{(k)}_{x}}:\Fq^{(k-1)}_{p}\otimes \Tq_{p}^{0,1}\rightarrow 
\left(\Fq^{(k-2)}_{x}\oplus \Tq_{x}^{1,0}\right)
/\left(\Fq^{(k-1)}_{x}\oplus \Tq_{x}^{1,0}\right),
\end{equation}
with
\begin{equation}
{\mathcal{L}^{(k)}_{x}}(\utilde{Z},\overline{\utilde{W}})= \frac{1}{2i}\,
\piup^{(k)} ([\utilde{Z},\overline{\utilde{W}}] ),
\end{equation}
where $\piup^{(k)}$ is the canonical projection 
\begin{equation}
\piup^{(k)}{:}\Fq^{(k-2)}_{x}\oplus\Tq_{x}^{1,0}\to (\Fq^{(k-2)}_{x}\oplus\Tq_{x}^{1,0})\diagup(\Fq^{(k-1)}_{x}\oplus \Tq_{x}^{1,0}),
\end{equation}
 and  $\utilde{Z},\utilde{W}$ are smooth germs in  $\Fq_{x}^{(k-1)}$ and $\Tq_{x}^{0,1}$, respectively.

\begin{dfn}\label{def3.2}
A  CR  manifold $\sfM$ is \emph{$\ktt$-nondegenerate} at $p$ iff there is a positive integer 
$\ktt$ such that
\begin{equation}\label{k-nndg}
\Fq_{p}^{(\ktt-1)}{\supsetneqq}\Fq_{p}^{(\ktt)}{=}\{0\}.
\end{equation}
In this case $\sfM$ is called \emph{finitely nondegenerate}, otherwise \emph{holomorphically degenerate}.
The integer $\ktt$ of \eqref{k-nndg} is called its \emph{Levi order}. 
\end{dfn}
The standard notion of Levi \emph{nondegeneracy} 
in literature is equivalent to $1$-nondegeneracy, or Levi order $1$, in definition \eqref{def3.2}.  

\begin{exam}[\cite{MeSp2012}]\label{exam_intro}
The \emph{tube over the future light cone}
\begin{equation*}
\sfM\doteq\{(z_1,z_2, z_{3})\in\C^3\,|\,  (\re{z}_1)^2+(\re{z}_2)^2-(\re{z}_3)^2=0\,,
\re{z}_3>0\}\subset\C^3
\end{equation*}
is a hypersurface in $\C^{3}$, and hence 
has a  CR  structure of type $(2,1)$,
being a smooth hypersurface immersed in $\C^{3}$,
which  is 
$2$-nondegenerate.
\end{exam}\par\smallskip
If $\sfM$ and $\sfM'$ are CR manifolds,  a  CR map $f\,{:}\,\sfM\to\sfM'$ is a $\Ci$-smooth
map such that $f_{*}^{\C}(T_{x}^{1,0}M)\,{\subseteq}\,T_{f(x)}^{1,0}\sfM'$ for all $x\,{\in}\,\sfM$. \par
A smooth fibration $\piup\,{:}\,\sfM\to\sfM'$ is said to be CR if 
$\piup_{*}^{\C}(T_{x}^{1,0}M)\,{=}\,T_{f(x)}^{1,0}\sfM'$ for all $x\,{\in}\,\sfM$.\par
We recall the following: 
\begin{lem}[Prop.4.1\cite{AlMeNa2006}]\label{l1.3} 
 Let $\sfM$ and $\sfM^{\,\prime}$ be  CR  manifolds. Assume that 
$\sfM^{\,\prime}$
is locally
embeddable and that there exists a  CR  fibration 
 \begin{equation*}
\piup:\sfM\to\sfM^{\,\prime}
\end{equation*}
with 
 complex
fibers of positive dimension. Then $\sfM$ is holomorphically degenerate. \qed
\end{lem}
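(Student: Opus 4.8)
The plan is to unwind the definition of the Freeman sequence \eqref{Freeman sequence} along the fibers of $\piup$ and show that the vertical $(0,1)$-vectors survive in every $\Fq^{(k)}$, so that $\Fq^{(k)}_x$ never shrinks to $\{0\}$ and hence $\sfM$ cannot be $\ktt$-nondegenerate at any point. Concretely, set $\sfV\doteq\ker\piup_*^{\C}\cap\Tq^{0,1}$, the sheaf of germs of $(0,1)$-vector fields tangent to the fibers. Since $\piup$ is a CR fibration with complex fibers of positive dimension, each fiber $\piup^{-1}(y)$ is a complex manifold of positive dimension, so $\sfV$ is a nonzero subsheaf of $\Tq^{0,1}$; note also that $\overline{\sfV}=\ker\piup_*^{\C}\cap\Tq^{1,0}$ consists of the vertical $(1,0)$-vectors. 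The first key point is that $\sfV$ is closed under bracket with $\Tq^{1,0}$ modulo $\Tq^{1,0}$ itself: if $\utilde Z\in\sfV$ and $\utilde W\in\Tq^{1,0}$, then $\piup_*^{\C}[\utilde Z,\overline{\utilde W}]=[\piup_*^{\C}\utilde Z,\piup_*^{\C}\overline{\utilde W}]=[0,\piup_*^{\C}\overline{\utilde W}]=0$, so $[\utilde Z,\overline{\utilde W}]$ is again vertical; being a bracket of a $(0,1)$ and a $(1,0)$ field it lies in $\Ha^{\C}\sfM$, hence it is a vertical section of $\Ha^{\C}\sfM=\Tq^{1,0}\oplus\Tq^{0,1}$, i.e. it lies in $\sfV\oplus\overline{\sfV}\subseteq\Tq^{0,1}\oplus\Tq^{1,0}$.

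Next I would prove by induction on $k$ that $\sfV_x\subseteq\Fq^{(k)}_x$ for every $x$ and every $k\geq 0$. The base case $k=0$ is immediate from $\Fq^{(0)}=\Tq^{0,1}\supseteq\sfV$. For the inductive step, assume $\sfV\subseteq\Fq^{(k-1)}$. Given $\utilde Z\in\sfV_x$ we must check $[\utilde Z,\Tq^{1,0}_x]\subseteq\Fq^{(k-1)}_x\oplus\Tq^{1,0}_x$. By the bracket computation above, for $\utilde W\in\Tq^{1,0}_x$ we have $[\utilde Z,\overline{\utilde W}]$ — reindexing, $[\utilde Z, \utilde W']$ with $\utilde W'\in\Tq^{1,0}_x$ — lands in $\sfV_x\oplus\overline{\sfV}_x\subseteq\Fq^{(k-1)}_x\oplus\Tq^{1,0}_x$, using $\sfV\subseteq\Fq^{(k-1)}$ from the inductive hypothesis and $\overline{\sfV}\subseteq\Tq^{1,0}$. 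Hence $\utilde Z\in\Fq^{(k)}_x$, completing the induction.

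Finally, since $\piup$ has complex fibers of \emph{positive} dimension, $\sfV_x\neq\{0\}$ at every point, so $\Fq^{(k)}_x\supseteq\sfV_x\neq\{0\}$ for all $k$; by Definition \ref{def3.2} there is no positive integer $\ktt$ with $\Fq^{(\ktt)}_x=\{0\}$, so $\sfM$ is not finitely nondegenerate, i.e. it is holomorphically degenerate. The local embeddability of $\sfM'$ enters only to ensure that the partial complex structure on $\sfM'$ — hence the notion of CR fibration and the pushforward identity $\piup_*^{\C}(\Tq^{1,0}\sfM)=\Tq^{1,0}\sfM'$ — behaves as expected and that the fibers genuinely carry a complex (not merely formal CR) structure of positive CR-dimension with vanishing CR-codimension; the main technical obstacle is precisely to justify, from local embeddability, that $\ker\piup_*^{\C}\cap\Ha^{\C}\sfM$ is exactly the complexified tangent space to the fibers and splits as $\sfV\oplus\overline{\sfV}$ with $\sfV$ of positive rank, after which the bracket bookkeeping above is routine. (Since this lemma is quoted from \cite[Prop.~4.1]{AlMeNa2006}, I would at this point simply cite that reference rather than belabor the embeddability subtleties.)
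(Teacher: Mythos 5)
Your overall strategy --- exhibit the vertical $(0,1)$-fields $\sfV\doteq\ker\piup_{*}^{\C}\cap\Tq^{0,1}$ as a nonzero subsheaf that survives in every term $\Fq^{(k)}$ of the Freeman sequence --- is the right one (the paper itself offers no proof of this lemma, only the citation to \cite{AlMeNa2006}), and the containment your induction actually needs, namely $[\sfV,\Tq^{1,0}]\subseteq\sfV\oplus\Tq^{1,0}\subseteq\Fq^{(k-1)}\oplus\Tq^{1,0}$, is true. But both steps of your justification of it are incorrect. First, the identity $\piup_{*}^{\C}[\utilde{Z},\utilde{W}]=[\piup_{*}^{\C}\utilde{Z},\piup_{*}^{\C}\utilde{W}]$ is only meaningful for $\piup$-projectable fields; a general $\utilde{W}\in\Tq^{1,0}$ is not projectable, and the bracket of a vertical field with a non-projectable field is \emph{not} vertical (for the projection $(x,y)\mapsto x$ one has $[\partial_{y},\,y\partial_{x}]=\partial_{x}$). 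Second, the claim that a bracket of a $(0,1)$-field with a $(1,0)$-field automatically lies in $\Ha^{\C}\sfM$ is false: the failure of precisely this containment is what the Levi form \eqref{e1.3} measures, so asserting it without argument amounts to assuming the Levi kernel is everything.

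The missing ingredient is exactly where the local embeddability of $\sfM^{\,\prime}$ must be used --- not, as you suggest, merely to guarantee that the fibers are complex (that is a hypothesis). Embed $\sfM^{\,\prime}$ locally in $\C^{N}$ and pull back the coordinates to CR functions $f_{j}=\piup^{*}w_{j}$ on $\sfM$. For $\utilde{Z}\in\sfV$ and $\utilde{W}\in\Tq^{1,0}$ one has $\utilde{Z}\bar{f}_{j}=0$ (verticality) and $\utilde{W}\bar{f}_{j}=0$ (because $f_{j}$ is CR), hence
\begin{equation*}
[\utilde{Z},\utilde{W}]\bar{f}_{j}=\utilde{Z}(\utilde{W}\bar{f}_{j})-\utilde{W}(\utilde{Z}\bar{f}_{j})=0 ,
\end{equation*}
so that $\piup_{*}^{\C}[\utilde{Z},\utilde{W}]\in\T^{1,0}\sfM^{\,\prime}$; since the fibers are complex, a dimension count identifies $(\piup_{*}^{\C})^{-1}(\T^{1,0}\sfM^{\,\prime})$ with $\Tq^{1,0}\oplus\sfV$, giving $[\sfV,\Tq^{1,0}]\subseteq\sfV\oplus\Tq^{1,0}$ (the $(1,0)$-component need not be vertical, so your stronger claim $[\sfV,\Tq^{1,0}]\subseteq\sfV\oplus\overline{\sfV}$ is not what comes out, but the weaker containment suffices for your induction). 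With this repair your induction and the final conclusion go through verbatim; as written, however, the key step rests on two false assertions, so the proof has a genuine gap.
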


\section{Homogeneous CR manifolds and CR algebras}\label{s2}
Let $\Gf^{\sigmaup}$ be a real Lie group of  CR  diffeomorphisms acting transitively on a  CR  manifold $\sfM.$
Fix a point $x$ in $\sfM$ and let $$\piup\,{:}\,\Gf^{\sigmaup}\,{\ni}\,g\,{\to}\,g{\cdot}x\,{\in}\,\sfM$$
be the natural projection. Its  differential at $p$ maps the Lie algebra $\gt^{\sigmaup}$
of $\Gf^{\sigmaup}$ onto the tangent space to $\sfM$ at $x.$ 
By the \emph{formal integrability} condition,  
the pullback of $\T_x^{0,1}\sfM$ by the complexification $\hat{\pi}_*:\gt\rightarrow T^{\C}_x\sfM$ of 
$\pi_*:\gt^{\sigmaup}\rightarrow T_x\sfM$ is complex a Lie subalgebra $\qt$ of $\gt\doteq\gt^{\sigmaup}\otimes\C$:
\begin{equation}
\qt\doteq\hat\pi^{*}(\T_x^{0,1}\sfM).
\end{equation}
We will denote by $\sigmaup$ the anti-$\C$-linear involution of $\gt$ fixing $\gs$.
\par\smallskip
A different choice of the base point $x$ would yield another  $\Ad(\Gf^{\sigmaup})$-con\-ju\-gated
complex Lie subalgebra.  
\par\smallskip
  Vice versa,  the assignment  of a complex
Lie subalgebra $\qt$ of $\gt$  
yields a $\Gf^{\sigmaup}$-equivariant 
structure of CR-manifold on a (locally) 
$\Gf^{\sigmaup}$-homogeneous space
 $\sfM$, by requiring that $\Tq_{x}^{0,1}$ is generated by 
 the pushforward of $\qt$ (see \cite{MeNa2005} for more details).

\begin{dfn}
A \emph{  CR  algebra} is a pair  $(\gt^{\sigmaup},\qt)$
consisting of a real Lie algebra $\gt^{\sigmaup}$ and  
a complex Lie subalgebra $\qt$ of its complexification
$\gt{=}\C{\otimes}_{\R}\gt^{\sigmaup},$ 
 such that the quotient $\gt^{\sigmaup}/(\gt^{\sigmaup}\cap\qt)$ is a finite dimensional real vector space. \par
 \end{dfn}

The
 real Lie 
 algebra $\gt^{\sigmaup}$ 
 encodes 
 the transitive group of  CR  diffeomorphisms 
 and 
 the 
 complex Lie 
 subalgebra
 $\qt$  
 the  CR  structure.
The CR-di\-men\-sion $n$ and 
CR-codimension $d$ of $\sfM$ are expressed 
in terms of its associated  CR  algebra 
$(\gt^{\sigmaup},\qt)$ by 
\begin{equation} \label{1.1}
\begin{cases} n=\dim_{\C}\qt-\dim_{\C}(\qt{\cap}\sigmaup(\qt)),\\
d=\dim_{\C}\gt - \dim_{\C}(\qt+\sigmaup(\qt)).
\end{cases}
\end{equation} 

\par
A
CR  algebra $(\gt^{\sigmaup},\qt)$ is called
\begin{itemize}
\item\emph{totally real}: if  $\qt=\qt\cap\sigmaup(\qt)$,  i.e.
the  CR  dimension $n$ is $0$;
\item \emph{totally complex}: if $\qt+\sigmaup(\qt)=\gt$,  i.e.  the  CR 
codimension $d$ is $0$.
\end{itemize}\par 
 We call the intersection $\qt\cap\gt^{\sigmaup}$  its
 \emph{isotropy subalgebra},   and the subspace
$\Hq=(\qt+\sigmaup(\qt))\cap\gt^{\sigmaup}$ its \emph{holomorphic tangent space}.  
Moreover,  a  CR  algebra $(\gt^{\sigmaup},\qt)$ is said to be \emph{effective}  
if  no nontrivial ideal of $\gt^{\sigmaup}$ is contained in $\gt^{\sigmaup}\cap\qt$.  \par 
The strong non integrability condition for the CR distribution of Definition\,\ref{fundamental}
translates to the following notion for CR algebras.
\par \begin{dfn} A CR  algebra $(\gt^{\sigmaup},\qt)$ 
is called \emph{fundamental} 
if 
$\qt\,{+}\,\sigmaup(\qt)$ generates $\gt$ as a Lie algebra. 
\end{dfn}

For a  finitely nondegenerate 
(locally) homogeneous   CR  manifold $\sfM$ the Levi order $\ktt$ can be computed
by using its associated  CR  algebra $(\gt^{\sigmaup},\qt)$.  
Noticing that $\gt\,{\cap}\,\sigmaup(\qt)$ is the 
 complexification of its isotropy subalgebra $\gt^{\sigmaup}{\cap}\,\qt$, 
 the $1$-nondegeneracy of the Levi form  can be
 stated by 
\begin{equation}
 \forall{Z}\in\qt{\setminus}\sigmaup(\qt), \;\;\exists\, Z'\in{\qt} \;\;\;\text{such that}\;\;\; [Z,\sigmaup({Z}')]\notin\qt
 +\sigmaup(\qt),
\end{equation}
and this is equivalent to 
\begin{equation*}
 \qt^{{(1)}}\doteq\{Z\in\qt\mid [Z,\sigmaup(\qt)]\subseteq\qt+\sigmaup(\qt)\}=\qt\cap\sigmaup(\qt).
\end{equation*}

\par
Similarly  to \eqref{e1.4},
in the homogeneous case one can seek whether, for any given  
$Z\,{\in}\,\qt{\backslash}(\qt{\cap}\sigmaup(\qt))$ one can find a positive integer $k$ and 
${Z}_{1},\hdots,{Z}_{k}\,{\in}\,\qt$ such that
 $$[\overline{Z}_{1},\hdots,\overline{Z}_{k},Z]\,{\notin}\,
\qt{+}\sigmaup(\qt).$$To this aim, 
in analogy to 
the
Freeeman's sequence \eqref{Freeman sequence}
of \S\ref{s1},  
it is  convenient to consider 
the descending chain (see e.g.  \cite{ Fe2007,  Fr1977, KaZa2006}) 
\begin{equation}\label{e1.1}
\qt^{(0)}\supseteq\qt^{(1)}\supseteq\cdots\supseteq\qt^{(k-1)}\supseteq\qt^{(k)}
\supseteq\qt ^{{(k+1)}}\supseteq \cdots,
\end{equation}
with 
\begin{equation*}
 \begin{cases}
\qt^{(0)}=\qt,\\
\qt^{(k)}=\{Z\in\qt^{(k-1)}\mid [Z,\sigmaup(\qt)]\subseteq\qt^{(k-1)}+\sigmaup(\qt)\},\;\;\text{for $k{\geq}1.$}
\end{cases}
\end{equation*} \par
Note that $\qt{\cap}\sigmaup(\qt)\,{\subseteq}\,\qt^{(k)}$ for all integers 
$k{\geq}0.$
If $(\gt^{\sigmaup},\qt)$ is fundamental and $\qt/(\qt{\cap}\sigmaup(\qt))$ is finite dimensional,
there is a smallest nonnegative 
integer $\ktt$ such that $\qt^{(k')}\,{=}\,\qt^{(\ktt)}$ for all $k'{\geq}\ktt.$ 

\begin{dfn}\label{knndg}
A   CR  algebra $(\gt^{\sigmaup},\qt)$ is said to be \emph{$\ktt$-nondegenerate} iff 
\begin{equation}
\qt^{(\ktt-1)}{\supsetneq}\,\qt^{(\ktt)}
 {=}\,\qt{\cap}\sigmaup({\qt}).
 \end{equation}
\end{dfn}
\begin{prop}[\cite{MaMeNa2021}]\label{Prop4.1}
 The terms $\qt^{(k)}$ of  \eqref{e1.1} are Lie subalgebras of~$\qt.$\qed
\end{prop}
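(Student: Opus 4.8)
The plan is to prove that each $\qt^{(k)}$ is a complex Lie subalgebra of $\qt$ by induction on $k$, exploiting the fact that $\qt$ and $\sigmaup(\qt)$ are already Lie subalgebras of $\gt$ and that the defining condition for $\qt^{(k)}$ is a bracket condition. The base case $k=0$ is trivial, since $\qt^{(0)}=\qt$. For the inductive step, I assume $\qt^{(k-1)}$ is a Lie subalgebra and show $\qt^{(k)}=\{Z\in\qt^{(k-1)}\mid [Z,\sigmaup(\qt)]\subseteq\qt^{(k-1)}+\sigmaup(\qt)\}$ is closed under the bracket (it is a $\C$-subspace by $\C$-bilinearity of the bracket, and contained in $\qt^{(k-1)}\subseteq\qt$ by construction).

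The heart of the argument is the following: take $Z_1,Z_2\in\qt^{(k)}$ and $W\in\sigmaup(\qt)$, and compute $[[Z_1,Z_2],\sigmaup(W)]$ via the Jacobi identity,
\begin{equation*}
[[Z_1,Z_2],\sigmaup(W)]=[Z_1,[Z_2,\sigmaup(W)]]-[Z_2,[Z_1,\sigmaup(W)]].
\end{equation*}
By the definition of $\qt^{(k)}$, both $[Z_2,\sigmaup(W)]$ and $[Z_1,\sigmaup(W)]$ lie in $\qt^{(k-1)}+\sigmaup(\qt)$. So I need to control $[Z_i,\qt^{(k-1)}+\sigmaup(\qt)]$. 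Now $[Z_i,\qt^{(k-1)}]\subseteq\qt^{(k-1)}$ since $\qt^{(k-1)}$ is a subalgebra (inductive hypothesis) and $Z_i\in\qt^{(k)}\subseteq\qt^{(k-1)}$; and $[Z_i,\sigmaup(\qt)]\subseteq\qt^{(k-1)}+\sigmaup(\qt)$ again because $Z_i\in\qt^{(k)}$. Hence each term $[Z_i,[Z_j,\sigmaup(W)]]$ lies in $\qt^{(k-1)}+\sigmaup(\qt)$, and therefore so does their difference $[[Z_1,Z_2],\sigmaup(W)]$. Since also $[Z_1,Z_2]\in\qt^{(k-1)}$, this shows $[Z_1,Z_2]\in\qt^{(k)}$, completing the induction.

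The one subtlety I would flag is the role of $\sigmaup(\qt)$ as a subalgebra: the decomposition step $[Z_i,\qt^{(k-1)}+\sigmaup(\qt)]\subseteq[Z_i,\qt^{(k-1)}]+[Z_i,\sigmaup(\qt)]$ is just bilinearity of the bracket, but to land back in $\qt^{(k-1)}+\sigmaup(\qt)$ I genuinely use that $[Z_i,\sigmaup(\qt)]\subseteq\qt^{(k-1)}+\sigmaup(\qt)$, which is the membership condition for $\qt^{(k)}$ rather than a general fact. So the proof is really an unwinding of the recursive definition through Jacobi, and the ``main obstacle'' is purely bookkeeping: making sure the inductive hypothesis is invoked with the correct index ($k-1$) in each of the two places it is needed, namely to know $\qt^{(k-1)}$ is a subalgebra and to know the containment $\qt^{(k)}\subseteq\qt^{(k-1)}$ lets us bracket elements of $\qt^{(k)}$ against $\qt^{(k-1)}$ inside $\qt^{(k-1)}$. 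No analytic input is required; everything is algebraic and finite.
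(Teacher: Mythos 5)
Your proof is correct, and it is the standard argument for this statement: the paper itself gives no proof (it cites \cite{MaMeNa2021} with a \qed), and the argument in that reference is exactly this induction on $k$ via the Jacobi identity, using the inductive hypothesis that $\qt^{(k-1)}$ is a subalgebra together with the membership condition $[Z_i,\sigmaup(\qt)]\subseteq\qt^{(k-1)}+\sigmaup(\qt)$. Your bookkeeping of where each hypothesis is used is accurate, so nothing needs to be added.
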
 

\begin{rmk}[\cite{MaMeNa2021},\cite{MaMeNaSp2017}]
We point out that the notion of 
\emph{finite nondegeneracy} 
defined in 
\cite{MeNa2005} by the requirement that for any complex Lie subalgebra $\qt '$ of~$\gt,$
\begin{equation} \label{e2.6}
 \qt\subseteq\qt '\subseteq\qt +\sigmaup(\qt) \;\Longrightarrow \qt '=\qt,
\end{equation}
is equivalent to that of Definiton\,\eqref{knndg}.
\par
Indeed, it easily follows from \cite[Lemma 6.1]{MeNa2005} that 
\begin{equation*}
 \qt'=\qt+\sigmaup({\qt}^{(\infty)}),\;\;\;\text{with}\;\;\; \qt^{(\infty)}={\bigcap}_{k{\geq}0}\qt^{(k)}
\end{equation*}
 is the largest complex Lie subalgebra $\qt '$ of $\gt$ with
 $\qt\,{\subseteq}\,\qt '\,{\subseteq}\,\qt+\sigmaup({\qt}).$  
 \end{rmk}
\section{Structure of the closed orbit}\label{s3}

A real Lie algebra $\gt^{\sigmaup}$ is  a \emph{real form} 
of a complex Lie algebra $\gt$ if  
\begin{equation*}
 \gt\simeq\C{\otimes}_{\R}\gs.
\end{equation*}\par
The 
real forms of $\gt$ are the 
fixed points loci  \begin{equation}
 \gt^{\sigmaup}=\{X\in\gt \,|\,  \sigmaup (X)\!=\!X\}
 \end{equation}
of its anti-$\C$-linear involutions $\sigmaup$.\par\smallskip
A complex flag manifold $\sfF$ is
a smooth compact algebraic variety that can be described as the quotient
of a
complex semisimple Lie group $\Gf$ by  a parabolic subgroup $\Qf$.  
In \cite[Theorem 2.6]{Wo1969}  J.  A.  Wolf shows the action of 
any real form $\Gf^{\sigmaup}$ of $\Gf$ partitions $\sfF$ into finitely many orbits.
With the partial complex structures induced by $\sfF,$
these orbits make a nice class of homogeneous
 CR  manifolds that were studied by many authors 
(see e.g. 
\cite{AlMe2006, Fe2007, MaMeNa2021,MaNa2017}).
Among
these orbits, only one is 
closed; it is also compact, connected    
and of minimal dimension \cite[Theorem 3.3]{Wo1969}.\par
Being connected and simply connected, a
complex flag manifold $\sfF{=}\Gf/\Qf$ is 
completely described by the Lie pair
$(\gt,\qt)$ consisting of the complex Lie algebras of $\Gf$ and $\Qf$
and vice versa to any  Lie pair $(\gt,\qt)$ of a complex semisimple Lie algebra and
a parabolic subalgebra $\qt$ 
corresponds a unique flag manifold $\sfF.$ 
Therefore  the classification of complex flag manifolds reduces to that
of parabolic subalgebras of semisimple complex Lie algebras. \par
Parabolic subalgebras  
$\qt$  of $\gt$ are classified, modulo automorphisms,  by a finite set of parameters as follows.  
Having fixed a Cartan subalgebra $\hg$ of~$\qt$, 
a Weyl chamber $C$ of the root system $\Rad$ of $(\gt,\hg)$,
the classes of parabolic subalgebras of $\gt$ are in a one to one correspondence  
with the subsets $\phiup$
of the basis $\Bz(C)$ 
of simple positive  roots for the lexicographic  order of 
 $C$ 
(see e.g. \cite[Ch.VIII,\S{3.4}]{Bo2005}). In the following we will drop, for simplicity,
the explicit reference to $C$, writing e.g. $\Bz$ instead of $\Bz(C)$.
\par
Each root $\beta$ in $\Rad$ can be written in a unique way as a 
notrivial linear combination 
\begin{equation}
 \beta={\sum}_{\alpha\in\mathcal{B}}n_{\beta,\alpha}\alpha,
\end{equation}
with integral coefficients $n_{\beta,\alpha}$ which are either all $\geq{0},$
or all $\leq{0}$.  Set 
\begin{equation}
\supp(\beta)=\{\alpha\in\mathcal{B}\mid n_{\beta,\alpha}\neq{0}\}.
\end{equation} 
The parabolic subset and the parabolic subalgebra associated to $\phiup\,{\subseteq}\,\Bz$ are 
\begin{gather}
\label{e3.4}
 \Qc_{\;\phiup}{=}\{\beta\in\Rad\mid n_{\beta,\alpha}\geq{0},\,\forall\alpha\in\phiup\}\subseteq\Rad,\\
 \label{e3.5}
 \qt_{\phiup}{=}\hg\oplus{\sum}_{\beta\in\Qc_{\;\phiup}}\gt_{\beta},\;\;\text{with}\;\;
 \gt_{\beta}\,{=}\,\{Z\,{\in}\,\gt\,{\mid}\,[H,Z]\,{=}\,\beta(H)Z\}. 
\end{gather}

The fact that 
$\Qc_{\;\phiup}$ is \emph{parabolic} 
means 
that
$$(\Qc_{\;\phiup}{+}\Qc_{\;\phiup})\cap\Rad\subseteq\Qc_{\;\phiup}\;\;\text{and}\;\;   
\Qc_{\;\phiup}\cup(-\Qc_{\;\phiup})=\Rad.$$

\begin{nnott}\label{n1}
We denote by 
$\xiup_{\phiup}$ be the $\R$-linear functional on the $\R$-linear span $\sfE$ 
of $\Rad$ which satisfies 
\begin{equation}\label{xiup}
\xiup_{\phiup}(\alpha)=
\begin{cases}
1, & \text{if } \alpha\in\phiup, \\
0, & \text{if } \alpha\in\mathcal{B}{\backslash}\phiup.
\end{cases}
\end{equation}
Then 
\begin{equation}
 \Qc_{\;\phiup}=\{\beta\in\Rad\mid \xiup_{\phiup}(\beta)\geq{0}\}
\end{equation}
and we get the partitions
\begin{equation} \label{equ2.5}
\Qc_{\;\phiup}=\Qc^{r}_{\,\,\phiup}\cup\Qc^{n}_{\,\,\phiup},\;\;\;
\Rad =\Qc^{r}_{\,\,\phiup}\cup\Qc^{n}_{\,\,\phiup}\cup\Qc^{-n}_{\,\,\phiup},\;\;
\end{equation}
where,
\begin{itemize}
 \item $\Qc^{r}_{\,\,\phiup}\doteq\{\beta\in\Qc_{\;\phiup}\mid -\beta\in\Qc_{\;\phiup}\}=
 \{\beta\in\Rad\mid \xiup_{\phiup}(\beta)=0\},$
 \item $\Qc^{n}_{\,\,\phiup}\doteq\{\beta\in\Qc_{\;\phiup}\mid -\beta\notin\Qc_{\;\phiup}\}=
 \{\beta\in\Rad\mid \xiup_{\phiup}(\beta)>0\} ,$
  \item $\Qc^{-n}_{\,\,\phiup}\doteq\{\beta\in\Rad\mid -\beta\in\Qc^{n}_{\,\,\phiup}\}=
 \{\beta\in\Rad\mid \xiup_{\phiup}(\beta)<0\}.$
\end{itemize}

We recall (see e.g. \cite[Ch.VIII,\S{3}]{Bo2005}):
\begin{itemize}
 \item $\qt^{r}_{\phiup}=\hg\oplus{\sum}_{\alpha\in\Qc^{r}_{\phiup}}\gt_{\alpha}$ is a reductive
 complex Lie algebra;
 \item $\qt^{n}_{\phiup}={\sum}_{\alpha\in\Qc^{n}_{\;\phiup}}\gt_{\alpha}$ is the nilradical of $\qt_{\phiup}$; 
 \item $\qt_{\phiup}=\qt^{r}_{\phiup}\oplus\qt^{n}_{\phiup}$ is a Levi-Chevalley decomposition of $\qt_{\phiup}$;
 \item $\qt_{\phiup}^{-n}={\sum}_{\alpha\in\Qc^{-n}_{\,\,\phiup}}\gt_{\alpha}$
 is a nilpotent Lie algebra;
 \item 
 $\qt^{\text{opp}}_{\phiup}=\qt^{r}_{\phiup}\oplus\qt^{-n}_{\phiup}$ is the
 parabolic Lie subalgebra of $\gt$ \emph{opposite of $\qt_{\phiup}$},
 decomposed into the direct sum of its reductive subalgebra $\qt^{r}_{\phiup}$ and
its nilradical $\qt^{-n}_{\phiup}.$
\end{itemize}
\end{nnott}

\smallskip\par
Given a Cartan subalgebra $\hs$ of a semisimple real Lie algebra
$\gs$, we can find a \textit{compatible} Cartan decomposition
$\gt^{\sigmaup}=\kt\oplus\pt$ of $\gs$, such that $\hs\,{=}\,(\hs\,{\cap}\,\kt)\,{\oplus}\,(\hs\,{\cap}\,\pt)$.
The two summands are its \textit{toroidal} and  \textit{vector} parts.
Among the equivalence classes of Cartan subalgebras of $\gs$, there are only one with maximal toroidal part
and only one with maximal vector part. \par

 A real Lie subalgebra $\mathfrak{t}^{\sigmaup}$ of $\gt^{\sigmaup}$ is \emph{triangular} if all linear maps $ad_{\gt^{\sigmaup}}(X)\in \mathfrak{gl}_{\R}(\gt^{\sigmaup})$
with $X\in\mathfrak{t}^{\sigmaup}$ can be simultaneously represented by triangular matrices in a suitable
basis of $\gt^{\sigmaup}$.  
All maximal triangular subalgebras of $\gt^{\sigmaup}$ are conjugate by an inner
automorphism (\cite{Mo1961}, \S{5.4}).  
A real Lie subalgebra of $\gt^{\sigmaup}$ containing a maximal
triangular subalgebra of $\gt^{\sigmaup}$ is called a $\mathfrak{t}^{\sigmaup}$\emph{-subalgebra}.
An effective parabolic CR algebra $(\gt^{\sigmaup}, \qt_{\phiup})$  
will be called \emph{minimal} if  $\qt\cap\gt^{\sigmaup}$ is
a $\mathfrak{t}$-subalgebra of $\gt$.
\begin{thm}[{\cite[Th. 5.8]{AlMeNa2006}}]
The  CR  manifold  associated to an effective parabolic
subalgebra $(\gt^{\sigmaup}, \qt_{\phiup})$ is closed if and only if $\gt^{\sigmaup}\cap\qt_{\phiup}$ is a $\mathfrak{t}^{\sigmaup}$-subalgebra of $\gt^{\sigmaup}$, i.e. $(\gt^{\sigmaup}, \qt_{\phiup})$ is minimal.
\end{thm}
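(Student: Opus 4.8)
The plan is to understand the closedness of a $\Gf^{\sigmaup}$-orbit in $\sfF = \Gf/\Qf$ through Wolf's description together with the correspondence between orbits and conjugacy classes of triangular subalgebras. First I would recall from \cite[Theorem 3.3]{Wo1969} (and the refinements in \cite{AlMeNa2006}) that among the finitely many $\Gf^{\sigmaup}$-orbits in $\sfF$ there is exactly one closed orbit, which is also the unique orbit of minimal dimension, and that closedness of an orbit $\Gf^{\sigmaup}\!\cdot\!x$ is equivalent to $\Gf^{\sigmaup}\!\cdot\!x$ being compact, equivalently to the isotropy subgroup at $x$ being "as large as possible" in a suitable sense. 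Translating to Lie algebras, the isotropy of the orbit through the point corresponding to $\qt_{\phiup}$ is $\gt^{\sigmaup}\cap\qt_{\phiup}$, so the statement to prove is: the orbit is closed iff $\gt^{\sigmaup}\cap\qt_{\phiup}$ contains a maximal triangular subalgebra of $\gt^{\sigmaup}$, i.e. is a $\mathfrak{t}^{\sigmaup}$-subalgebra.

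Next I would set up the two implications. For the ``only if'' direction: suppose the orbit is closed, hence compact. A maximal triangular subalgebra $\mathfrak{t}^{\sigmaup}$ of $\gt^{\sigmaup}$ is the Lie algebra of a closed connected solvable (in fact minimal parabolic $\times$ unipotent-type) subgroup $\Tf^{\sigmaup}$; by the Borel fixed point theorem applied to the complexification, $\Tf^{\sigmaup}$ — being a solvable algebraic-type group acting on the projective variety $\sfF$ — has a fixed point in every closed $\Gf^{\sigmaup}$-invariant subvariety, in particular in the closed orbit $\Gf^{\sigmaup}\!\cdot\!x$. Since $\Gf^{\sigmaup}$ acts transitively on that orbit, after conjugating the base point we may assume $\Tf^{\sigmaup}$ fixes $x$, so $\mathfrak{t}^{\sigmaup}\subseteq\gt^{\sigmaup}\cap\qt_{\phiup}$, which is exactly what it means for $\gt^{\sigmaup}\cap\qt_{\phiup}$ to be a $\mathfrak{t}^{\sigmaup}$-subalgebra (here we use that all maximal triangular subalgebras are inner-conjugate, by \cite{Mo1961}, so ``containing one'' and ``containing a conjugate'' coincide up to moving $x$ within its orbit).

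For the ``if'' direction: suppose $\gt^{\sigmaup}\cap\qt_{\phiup}\supseteq\mathfrak{t}^{\sigmaup}$ for some maximal triangular subalgebra. Then the isotropy subgroup $\Gf^{\sigmaup}_x$ at $x$ contains the corresponding maximal triangular subgroup $\Tf^{\sigmaup}$, so $\Gf^{\sigmaup}/\Gf^{\sigmaup}_x$ is a quotient of $\Gf^{\sigmaup}/\Tf^{\sigmaup}$, which is compact because $\Tf^{\sigmaup}$ contains a minimal parabolic subgroup of $\Gf^{\sigmaup}$ (this is the standard fact that the flag variety of a real reductive group, $G/P_{\min}$, is compact, together with the observation that adjoining the extra unipotent directions in $\mathfrak{t}^{\sigmaup}$ only enlarges the subgroup). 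Hence the orbit $\Gf^{\sigmaup}\!\cdot\!x\cong\Gf^{\sigmaup}/\Gf^{\sigmaup}_x$ is compact, thus closed in $\sfF$. Combining the two implications with the uniqueness of the closed orbit gives the theorem, and also identifies it as the minimal one in the sense defined just above.

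The main obstacle, and the point that needs the most care, is the ``only if'' direction: one must make precise what kind of group $\Tf^{\sigmaup}$ is and why it has a fixed point on the closed orbit. The clean way is to pass to the Iwasawa decomposition $\gt^{\sigmaup}=\mathfrak{k}\oplus\mathfrak{a}\oplus\mathfrak{n}$: a maximal triangular subalgebra contains $\mathfrak{a}\oplus\mathfrak{n}$ plus a maximal torus of the centralizer, and the corresponding group $\Tf^{\sigmaup}=AN\cdot(\text{compact torus})$ is cocompact in a minimal parabolic. One then invokes the real analogue of Borel's fixed point theorem for the closed orbit (equivalently, uses that $\Gf^{\sigmaup}/P_{\min}$ is the unique compact orbit and maps onto every closed orbit), being careful that the ``triangular'' notion of \cite{Mo1961} is exactly calibrated so that a $\mathfrak{t}^{\sigmaup}$-subalgebra is the same as a subalgebra containing a minimal-parabolic-plus-extra-unipotent piece; the rest is bookkeeping with the root-space decompositions \eqref{e3.4}--\eqref{e3.5} and the fact, already recorded in the excerpt, that all maximal triangular subalgebras are conjugate.
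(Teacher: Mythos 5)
Your strategy is sound but genuinely different from the paper's: the paper disposes of this theorem in one line by citing Onishchik's general criterion (a homogeneous space of a real linear group is closed if and only if the isotropy contains a maximal connected triangular subgroup), whereas you set out to reprove that criterion in this special case via the Iwasawa decomposition and a fixed-point argument. That is a legitimate, more self-contained route, but as written it has two slips that need repair. First, in the ``if'' direction you assert that $\Tf^{\sigmaup}$ \emph{contains} a minimal parabolic subgroup; the containment is the other way around. A maximal triangular subalgebra is (conjugate to) $\at\oplus\nt$ for an Iwasawa decomposition $\gt^{\sigmaup}=\kd\oplus\at\oplus\nt$, and it cannot contain the compact factor $M$ of $P_{\min}=MAN$, nor the ``maximal torus of the centralizer'' you adjoin to it: a compact torus acts with purely imaginary $\ad$-eigenvalues and is never triangular over $\R$. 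Compactness of $\Gf^{\sigmaup}/\Tf^{\sigmaup}$ follows instead from $\Gf^{\sigmaup}=KAN$, equivalently from the cocompactness of $AN$ in $P_{\min}$; with that correction the ``if'' direction is fine. Second, in the ``only if'' direction, Borel's fixed point theorem applied to the complexification of $\Tf^{\sigmaup}$ only produces a fixed point somewhere in $\sfF$, not in the closed orbit, which is a compact real submanifold and not a complex subvariety; you acknowledge this, and the correct tool is the fixed point theorem for split solvable groups over $\R$ (or Onishchik's compactness criterion itself), but that step is precisely the nontrivial content of the reference the paper cites and cannot be waved through via the complex statement. With these two points fixed your argument does recover the theorem; the paper's approach buys brevity at the cost of outsourcing the entire content to the cited result of Onishchik.
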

\begin{proof}
Indeed, since $\Gf^{\sigmaup}$ is a linear group, a $\Gf^{\sigmaup}$-homogeneous space is closed 
if and only if the isotropy $\Gf^{\sigmaup}\,{\cap}\,\Qf_{\,\phiup}$ 
contains a maximal connected triangular subgroup (see \cite[
Part\,{II}, Ch.5, \S{1.1}]{On1993}), i.e. 
if $\gt^{\sigmaup}\cap\qt_{\phiup}$ is a $\mathfrak{t}^{\sigmaup}$-subalgebra of $\gt^{\sigmaup}.$
\end{proof}
\begin{nnott}
Fix a 
Cartan subalgebra $\mathfrak{h}^{\sigmaup}$ of $\gs$. Its complexification $\mathfrak{h}$
is a Cartan subalgebra of $\gt$. The conjugation $\sigmaup$ induces a symmetry
on $\Rad\,{=}\,\Rad(\gt,\hg)$, that we will still denote by the same symbol $\sigmaup$.

A root $\alpha\in\Rad$ is called :
\begin{itemize}
\item \emph{real} if $\sigmaup (\alpha)=\alpha$;
\item \emph{immaginary} if $\sigmaup (\alpha)=-\alpha$;
\item  \emph{complex} if $\sigmaup (\alpha)\neq
\pm\alpha$. 
\end{itemize}
We  denote by $\Rad_{\;\bullet}^{\sigmaup}$ the set of 
imaginary roots in $\Rad$.  
\end{nnott}

We say that $C$ is an $S$-chamber if the 
$\sigmaup$-conjugate of any positive complex 
root stays  positive: 
\begin{equation}\label{Adapted Weyl}
 \sigmaup(\alpha)\in\Rad^{+},\;\;\forall \alpha\in\Rad^{+}{\setminus}\Rad_{\;\bullet}^{\sigmaup}.
\end{equation}
We can always find Weyl chabers with this property
(see e.g. \cite[Proposition\,{6.1}]{AlMeNa2006})
and, on the basis $\Bz$ of simple positive roots of an $S$\!-chamber  
an involution $\epi\,{:}\,\mathcal{B}{\to}\mathcal{B}$ is defined, 
which keeps fixed
the elements of 
$\mathcal{B}^{\sigmaup}_{\bullet}\,{\doteq}\,\mathcal{B}\,{\cap}\,\Rad_{\;\bullet}^{\sigmaup}$ 
and    
such that, for nonnegative $n_{\alpha,\beta}\,{\in}\,\Z,$  
\begin{equation} \label{sigmaup}
\begin{cases}
\sigmaup({\alpha})=-\alpha, & \forall\alpha\in\Bz^{\sigmaup}_{\bullet},\\
\sigmaup({\alpha})=\epi(\alpha)+{\sum}_{\beta\in\Bz^{\sigmaup}_{\bullet}}
n_{\alpha,\beta}\beta,
&\forall \alpha\in\mathcal{B}{\backslash}\Bz^{\sigmaup}_{\bullet}.
\end{cases}
\end{equation}
\par 
\begin{dfn}
\smallskip
A  \emph{Satake  diagram} $\mathcal{S}$ is obtained from  a Dynkin 
diagram, with  the  simple roots of an $S$-chamber, 
 by painting black the roots in
$\Bz^{\sigmaup}_{\bullet}$ and joining by an 
 arch the pairs of distinct complex roots $\alpha,\beta$ 
with $\epi(\alpha)=\beta.$ 
\end{dfn} 
The Satake diagrams associated to an $\hs$ with maximal vector part 
classify (modulo equivalence) the
real forms of the complex Lie algebra 
dexcribed by the underlying 
 the Dynkin diagram  (see e.g.  \cite{Ar1962}).  
 \par 
 Fix a subset $\phiup$ of $\mathcal{B}$ and consider the diagram $(\mathcal{S}, \phiup)$ obtained from $\mathcal{S}$ by adding a \emph{cross-mark} on each node of S corresponding to a root in $\phiup$.

\begin{exam} \label{ex2.5}
Fix the real form $\gt^{\sigmaup}=\mathfrak{su}(1,3)$, then  the  CR  algebra $(\gt^{\sigmaup},\qt_{\phiup})$,  where $\phiup{=}\{\alpha_2\}{\subset}\{\alpha_1,\alpha_2,\alpha_3\}=\mathcal{B}$,  is described by the \emph{cross-marked Satake diagram}
\vspace{1cm}
\begin{equation*} 
	 \xymatrix@R=-.3pc{&\!\!\medcirc\!\!\ar@{-}[r]\ar@{<->}@/^1pc/[rr]
&\!\!\medbullet\!\! \ar@{-}[r]&\!\!\medcirc\!\!\\
&\alpha_1{=}e_1{-}e_2&\alpha_2{=}e_2{-}e_3&\alpha_3{=}e_3{-}e_4
\\
  &&\times&}
\end{equation*}
where the cross-mark indicates the root in $\phiup$.
\par
This is the CR algebra of the minimal orbit $\sfM$ 
of $\SU(1,3)$, consisting of the totally isotropic 
two-planes of $\C^{4}$ for 
a hermitian symmetric form of signature~$(1,3).$ 

Here $\gt\,{\simeq}\,\slt_{4}(\C),$ $\Rad\,{=}\,\{{\pm}(e_{i}{-}e_{j})\,{\mid}\, 1{\leq}i{<}j{\leq}4\},$  with a basis of simple roots 
$$\mathcal{B}\,{=}\,\{e_{1}{-}e_{2},\,e_{2}{-}e_{3},\,e_{3}{-}e_{4}\}$$ for an orthonormal basis
$e_{1},e_{2},e_{3},e_{4}$ of $\R^{4}$.  
Then the linear functional  $\xiup_{\phiup}$ on $\Rad$ in \eqref{xiup} is given by
\begin{equation*}
 \xiup_{\phiup}(e_{i})= 
\begin{cases}
 1, & i{=}1,2,\\
 0, & i{=}3,4,
\end{cases}
\end{equation*}
and the conjugation $\sigmaup$ induced by $\mathfrak{su}(1,3)$ is 
\begin{equation*}
 \sigmaup(e_{1})={-}e_{4},\;\; \sigmaup(e_{i})={-}e_{i}, \,\,\,i{=}2,3.\\
\end{equation*}
\end{exam}\par
\smallskip
Summarizing,  to a parabolic subalgebra $\qt_{\phiup}$ we associate 
a cross-marked Dynkin diagram, consisting of the Dynkin diagram of $\gt$ 
with marks on the nodes corresponding to
roots in $\phiup$. The correspondence 
between isomorphism classes of parabolic subalgebras and cross-marked Dynkin diagrams is bijective.
\par
An effective parabolic minimal  CR  algebra $(\gt^{\sigmaup}, \qt)$ 
can always be described by a cross-marked Satake diagram:  
in fact, we can find a Cartan subalgebra $\mathfrak{h}^{\sigmaup}$ of $\gt^{\sigmaup}$  
with maximal vector part and an $S$-chamber $C$ such that, 
for a subset $\phiup$ of $\Bz$, $\qt\,{=}\,\qt_{\phiup}$.
We have indeed
\begin{thm}[{\cite[Th.6.3]{AlMeNa2006}}]\label{t3.4}
Having fixed a Cartan subalgebra $\hs$ of $\gs$ with maximal vector part, there is
a one to one correspondence 
\begin{equation}
(\mathcal{S},\phiup)\leftrightarrow (\gt^{\sigmaup},\qt_{\phiup})
\end{equation}
between cross-marked Satake diagrams 
(modulo automorphism of cross-marked Satake diagrams) 
and minimal effective parabolic  CR  algebras (modulo  CR  algebra isomorphisms).  
\qed
\end{thm}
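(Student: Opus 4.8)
The plan is to exhibit mutually inverse maps between the two sides of the correspondence and then check that they descend to the indicated equivalence classes. \emph{From a diagram to a CR algebra:} given a cross-marked Satake diagram $(\mathcal{S},\phiup)$, the underlying Dynkin diagram determines the complex semisimple Lie algebra $\gt$; the Satake decoration (the black nodes $\Bz^{\sigmaup}_{\bullet}$ and the arrows $\epi$) reconstructs, via the normal form \eqref{sigmaup}, an anti-$\C$-linear involution $\sigmaup$ of $\gt$ — this is the content of the Satake--Araki classification of the real forms relative to a maximally split Cartan $\hs$ and an $S$-chamber $C$ (\cite{Ar1962}) — and the cross-marked subset $\phiup\subseteq\Bz$ produces the parabolic $\qt_{\phiup}$ through \eqref{e3.4}--\eqref{e3.5}. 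The pair $(\gt^{\sigmaup},\qt_{\phiup})$ is a parabolic CR algebra by construction; it is effective precisely when $\phiup$ meets every $\sigmaup$-orbit of connected components of $\mathcal{S}$, so that no simple ideal of $\gt^{\sigmaup}$ is absorbed into $\qt_{\phiup}$, and one restricts attention to cross-marked diagrams with this property; and it is minimal: since $\qt_{\phiup}\supseteq\bt\doteq\hg\oplus\bigoplus_{\alpha\in\Rad^{+}}\gt_{\alpha}$, the isotropy $\gt^{\sigmaup}\cap\qt_{\phiup}$ contains $\gt^{\sigmaup}\cap\bt=(\bt\cap\sigmaup(\bt))^{\sigmaup}$, and because $C$ is an $S$-chamber the positive system $\Rad^{+}$ restricts to a positive restricted-root system on the vector part $\at$ of $\hs$, so that $\bt$ contains both $\at$ and the nilpotent $\nt$ of the corresponding Iwasawa decomposition, and hence $\gt^{\sigmaup}\cap\bt$ contains the maximal triangular subalgebra $\at\oplus\nt$ of $\gt^{\sigmaup}$ (\cite{Mo1961}); by the criterion \cite[Th.\,5.8]{AlMeNa2006} this is exactly minimality.

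\emph{From a CR algebra to a diagram:} conversely, let $(\gt^{\sigmaup},\qt)$ be a minimal effective parabolic CR algebra. By minimality the isotropy $\gt^{\sigmaup}\cap\qt$ contains a maximal triangular subalgebra $\mathfrak{t}^{\sigmaup}$; completing $\mathfrak{t}^{\sigmaup}$ to an Iwasawa decomposition of $\gt^{\sigmaup}$ yields a maximal split abelian subalgebra $\at$ together with its nilpotent $\nt$, with $\at\oplus\nt\subseteq\gt^{\sigmaup}\cap\qt$. Choose a Cartan subalgebra $\hs$ of $\gt^{\sigmaup}$ having $\at$ as vector part — so $\hs$ has maximal vector part — and, using \cite[Proposition\,6.1]{AlMeNa2006}, an $S$-chamber $C$ whose positive system $\Rad^{+}$ contains the roots occurring in $\nt$, so that the Borel $\bt=\hg\oplus\bigoplus_{\alpha\in\Rad^{+}}\gt_{\alpha}$ satisfies $\nt\subseteq\bt$; one then verifies $\bt\subseteq\qt$. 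Since the parabolic subalgebras of $\gt$ containing $\bt$ are exactly the $\qt_{\phiup}$ with $\phiup\subseteq\Bz$, we obtain $\qt=\qt_{\phiup}$ for $\phiup=\{\alpha\in\Bz\mid\gt_{-\alpha}\not\subseteq\qt\}$, and the Satake diagram of $\sigmaup$ in the basis $\Bz$, cross-marked on $\phiup$, is the diagram attached to $(\gt^{\sigmaup},\qt)$; effectiveness of the CR algebra translates into the combinatorial condition on $(\mathcal{S},\phiup)$ noted above.

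\emph{Compatibility and bijectivity:} an isomorphism of cross-marked Satake diagrams lifts to an isomorphism $\gt\to\gt'$ carrying the fixed Cartan subalgebras and chambers to one another; by the rigidity of the Satake classification it conjugates $\sigmaup$ to $\sigmaup'$ and, by construction, $\qt_{\phiup}$ onto $\qt'_{\phiup'}$, hence it is a CR-algebra isomorphism. Conversely, a CR-algebra isomorphism $(\gt^{\sigmaup},\qt)\to((\gt')^{\sigmaup'},\qt')$ complexifies to an isomorphism $\gt\to\gt'$ intertwining $\sigmaup$ with $\sigmaup'$ and $\qt$ with $\qt'$; composing with inner automorphisms of $\gt^{\sigmaup}$ — legitimate because maximally split Cartan subalgebras are all $\mathrm{Inn}(\gt^{\sigmaup})$-conjugate and, for a fixed one, the $S$-chambers form a single orbit under the inner automorphisms preserving it (equivalently, under the relative Weyl group) — we may assume it matches the frames used to read off the diagrams, so it induces an isomorphism of cross-marked Satake diagrams; the same conjugacy facts show that the diagram attached to $(\gt^{\sigmaup},\qt)$ is independent, up to isomorphism, of the choices made. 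Since for a fixed frame $(\hs,C)$ the correspondences $\sigmaup\leftrightarrow\mathcal{S}$ and $\qt\leftrightarrow\phiup$ are honest bijections, the two maps are mutually inverse. The step I expect to be the main obstacle is precisely the interface between Mostow's maximal triangular subalgebras of $\gt^{\sigmaup}$ and the $S$-adapted Borel subalgebras of $\gt$: proving that minimality of $(\gt^{\sigmaup},\qt)$ really does force $\qt$ to contain a Borel arising from a maximally split Cartan and an $S$-chamber (the step ``$\bt\subseteq\qt$'' above), together with the dual identification of $\gt^{\sigmaup}\cap\bt$ in the first part; granting this, everything else is bookkeeping on top of the classical Satake--Araki classification of real forms and the standard description of parabolic subalgebras lying over a Borel.
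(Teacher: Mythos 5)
The paper proves nothing here: Theorem~\ref{t3.4} is imported from \cite[Th.6.3]{AlMeNa2006} and stated with the proof omitted, so the only meaningful comparison is with the argument of that reference --- and your outline is, in structure, exactly that argument: read off $(\gt,\sigmaup,\qt_{\phiup})$ from $(\mathcal{S},\phiup)$ via the Satake--Araki normal form \eqref{sigmaup} and the recipe \eqref{e3.4}--\eqref{e3.5}; in the other direction, use minimality to place a maximal triangular subalgebra inside the isotropy, build a maximally vector Cartan subalgebra and an $S$-chamber around it, and recover $\phiup$ from the classification of parabolic subalgebras over the resulting Borel; finally, invoke conjugacy of maximally vector Cartan subalgebras and of $S$-chambers, together with diagram automorphisms, to make everything well defined on isomorphism classes. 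Your characterization of effectiveness ($\phiup$ meets each $\sigmaup$-orbit of connected components), your recovery of $\phiup$ from $\qt$, and the first direction (minimality of $(\gs,\qt_{\phiup})$ because $\gs\cap\bt\supseteq\at\oplus\nt$, which is a maximal triangular subalgebra by \cite{Mo1961}) are all correct as written.

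The one real gap is the step you yourself flag: showing that for a minimal effective parabolic $(\gs,\qt)$ the parabolic $\qt$ actually contains an $S$-adapted Borel $\bt$ built from a maximally vector Cartan subalgebra. This is the substantive content of the theorem, and ``one then verifies $\bt\subseteq\qt$'' leaves it unproved. The difficulty is that $\at\oplus\nt\subseteq\gs\cap\qt$ only yields $\C\at\oplus\C\nt\subseteq\qt\cap\sigmaup(\qt)$; one must still produce inside $\qt$ the full Cartan subalgebra $\hg$ --- whose toroidal part sits in the centralizer $\mathfrak{m}$ of $\at$ in $\kt$ and has no a priori relation to $\qt$ --- together with all of $\Rad^{+}$, in particular the positive imaginary root spaces. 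The way this is closed in \cite{AlMeNa2006} is, in substance, to use that $\qt\cap\sigmaup(\qt)$, being an intersection of two parabolic subalgebras, contains a $\sigmaup$-stable Cartan subalgebra of $\gt$, and that the normalizer of $\nt$ in $\gs$ is the minimal parabolic subalgebra $\mathfrak{m}\oplus\at\oplus\nt$, whose complexification contains every $S$-adapted Borel; this pins the Cartan subalgebra down to one of maximal vector part and forces $\bt\subseteq\qt$. Until that interface is supplied, your inverse map is only defined on the image of the forward map, and surjectivity onto all minimal effective parabolic CR algebras --- hence the bijection itself --- remains open.
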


\begin{rmk}

Having fixed a $\hs$ with maximal vector part and 
$\phiup\,{\subset}\,\Bz$ for a base of simple positive roots
of a Weyl chamber $C$ satisfying \eqref{Adapted Weyl}
 (see \cite[Prop.6.1,\,6.2]{AlMeNa2006}) on $\Rad$,
 the symmetry  
 $\vartheta(\alpha)\,{=}\,{-}\sigmaup(\alpha)$, 
 corresponding to a Cartan involution on $\gs$, 
leaves invariant
 $\Qc_{\;\phiup}^{-n}\cap\sigmaup\left({\Qc}^{\,n}_{\;\phiup}\right)$:
 this set 
 is 
 therefore the union of 
  its fixed points, which are in $\Rad_{\,\bullet}^{\sigmaup}$,
 and of pairs $(\alpha, \sigmaup({\alpha}))$ of symmetric distinct roots. \par
If   
 $(\gs,\qt_{\phiup})$ is the effective parabolic $CR$ algebra of a minimal orbit, then by \eqref{Adapted Weyl}
 the set of roots $\Qc_{\;\phiup}$ associated to
  $\qt_{\phiup}$ satisfies
\begin{equation}\label{blacknodes}
 \Qc^{n}_{\;\phiup}\cap\sigmaup(\Qc_{\;\phiup}^{-n})\,{\subseteq}\,\Rad_{\,\;\bullet}^{\sigmaup}, 
 \text{or, equivalently }\Qc^{-n}_{\;\phiup}\cap\sigmaup(\Qc_{\;\phiup}^{n})\,{\subseteq}\,\Rad_{\,\;\bullet}^{\sigmaup}.
\end{equation}
\end{rmk}

\begin{rmk}[{\cite[Prop.6.2]{AlMe2006}}]\label{prodotti}
In the following we will further restrict our consideration to the case of a simple $\gs$.  In fact if $(\gt^{\sigmaup},\qt_{\phiup})$ 
is an effective  parabolic 
CR  algebra and $\gt^{\sigmaup}{=}\gt^{\sigmaup}_1\oplus\cdots\oplus\gt^{\sigmaup}_{\ell}$ 
the decomposition of $\gt^{\sigmaup}$ into the direct sum of its simple ideals,  then the following holds:
\begin{enumerate}
\item $\qt=\qt_1\oplus\cdots\oplus\qt_{\ell},$ where $\qt_j=\qt\cap\gt_j$;
\item for each $j\,{=}\,1,\hdots,\ell$, the pair 
$(\gt^{\sigmaup}_j,\qt_j)$ is an effective fundamental parabolic  CR  algebra;
\item $(\gt^{\sigmaup},\qt_{\phiup})$ is fundamental iff $(\gt^{\sigmaup}_j,\qt_j)$ is fundamental for all $j\in\{1,\dots,\ell\}$;
\item $(\gt^{\sigmaup},\qt_{\phiup})$ is finitely nondegenerate iff $(\gt^{\sigmaup}_j,\qt_j)$ is finitely nondegenerate for all $j\in\{1,\dots,\ell\}$.
\end{enumerate}
\end{rmk}

\begin{thm}[{\cite[Th.10.2]{AlMeNa2006}}]\label{t3.6}
A simple effective parabolic minimal CR algebra $(\gt^{\sigmaup}, 
\qt_{\phiup})$ with
associated cross-marked Satake diagram $(\mathcal{S}, \phiup)$ is  
totally complex if and only if 
\begin{equation}\label{e3.13}
 \Qc_{\;\phiup}^{n}\cap\sigmaup(\Qc_{\;\phiup}^{n})\,{=}\,\emptyset,
\end{equation}
i.e. if and only if 
 one of the following holds:

\begin{enumerate}
\item $\gt^{\sigmaup}$ is the \emph{compact real form} of $\gt$;
\item $\gt^{\sigmaup}$ is of the complex type 
(i.e. its complexification $\gt$ is semisimple, but non simple)
and all cross-marked nodes 
belong to the same connected component of $\mathcal{S}$;
\item $(\mathcal{S},\phiup)$ is one of the cases 
of the following table:
\begin{table}[H]

\begin{tabular}{ | c | c | c | c |} 
  \hline
  Type & $\,\,\,\gs$  &$\mathcal{S}$& $\phiup$\\ 
  
  \hline

    \textsc{A\,II} & $\mathfrak{sl}(\mathbb{H},p),\;\ell{=}2p{-}1$ &
    \dynkin[labels={\alpha_1,\alpha_2,,,\alpha_\ell},scale=2.0]A{II}&
    $\phiup{=}\{\alpha_1\},\{\alpha_\ell\}.$\\ 
  \hline
   \textsc{D\,{II}} & $\mathfrak{so}(1,{2\ell{-}1})$ 
  &
\dynkin[labels={\alpha_1,,,\alpha_{\ell-1},\alpha_{\ell}},scale=1.8]D{II}         
& 
   
  $\phiup{=}\{\alpha_{\ell-1}\};$
   $\phiup{=}\{\alpha_\ell\}.$
  \\ 
  \hline
\end{tabular}\vspace{6pt}
 \caption{\label{complex ones}Totally complex  CR   algebras $(\gt^{\sigmaup},
\qt_{\phiup})$ with simple $\gt$.}
\end{table}
\end{enumerate}
\end{thm}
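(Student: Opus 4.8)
The statement contains two equivalences, which I would establish in turn.

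\smallskip
\emph{The combinatorial criterion.} Since $\sigmaup(\hg)=\hg$ and $\sigmaup(\gt_{\beta})=\gt_{\sigmaup(\beta)}$, formula \eqref{e3.5} gives $\qt_{\phiup}+\sigmaup(\qt_{\phiup})=\hg\oplus\bigoplus_{\beta\in\Qc_{\;\phiup}\cup\,\sigmaup(\Qc_{\;\phiup})}\gt_{\beta}$, so $(\gt^{\sigmaup},\qt_{\phiup})$ is totally complex iff $\Qc_{\;\phiup}\cup\sigmaup(\Qc_{\;\phiup})=\Rad$. As $\Rad=\Qc_{\;\phiup}\sqcup\Qc_{\;\phiup}^{-n}$, this says $\Qc_{\;\phiup}^{-n}\subseteq\sigmaup(\Qc_{\;\phiup})$, i.e. $\xiup_{\phiup}(\beta)<0\Rightarrow\xiup_{\phiup}(\sigmaup(\beta))\geq 0$ for all $\beta\in\Rad$; since $\sigmaup$ is an involution and $\xiup_{\phiup}(-\beta)=-\xiup_{\phiup}(\beta)$, replacing $\beta$ by $-\beta$ this is exactly the non-existence of $\beta\in\Rad$ with $\xiup_{\phiup}(\beta)>0$ and $\xiup_{\phiup}(\sigmaup(\beta))>0$, which is \eqref{e3.13}. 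I would also record that for $\beta\in\Rad^{+}$ one has $\xiup_{\phiup}(\beta)>0$ iff $\supp(\beta)\cap\phiup\neq\emptyset$, so $\Qc_{\;\phiup}^{n}=\{\beta\in\Rad^{+}\mid\supp(\beta)\cap\phiup\neq\emptyset\}$, and, since $\sigmaup(\beta)=-\beta\notin\Rad^{+}$ for imaginary $\beta$, \eqref{e3.13} is equivalent to: \emph{no positive non-imaginary root $\beta$ has both $\supp(\beta)$ and $\supp(\sigmaup(\beta))$ meeting $\phiup$}.

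\smallskip
\emph{Sufficiency of (1)--(3).} For the compact form $\sigmaup(\beta)=-\beta$ for all $\beta$, so $\sigmaup(\Qc_{\;\phiup}^{n})=\Qc_{\;\phiup}^{-n}$ is disjoint from $\Qc_{\;\phiup}^{n}$. If $\gt=\gt_1\oplus\gt_2$ is of complex type, $\sigmaup$ interchanges the two simple ideals and hence the positive root systems $\Rad_1^{+},\Rad_2^{+}$; if all crosses lie in $\gt_1$ then $\Qc_{\;\phiup}^{n}\subseteq\Rad_1^{+}$ and $\sigmaup(\Qc_{\;\phiup}^{n})\subseteq\Rad_2^{+}$, again disjoint. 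For \textsc{A\,II} with $\phiup=\{\alpha_1\}$ one has $\sigmaup(e_{2k-1})=e_{2k}$, $\sigmaup(e_{2k})=e_{2k-1}$ and $\Qc_{\;\phiup}^{n}=\{e_1-e_j\mid 2\leq j\leq 2p\}$, whose $\sigmaup$-images $e_2-e_{j'}$ are either negative or have zero $\alpha_1$-coefficient; for \textsc{D\,II} with $\phiup=\{\alpha_\ell\}$ one has $\sigmaup(e_1)=e_1$, $\sigmaup(e_i)=-e_i$ $(i\geq 2)$ and $\Qc_{\;\phiup}^{n}=\{e_i+e_j\mid 1\leq i<j\leq\ell\}$, whose $\sigmaup$-images are negative roots or roots $e_1-e_j$, all with zero $\alpha_\ell$-coefficient; the choices $\phiup=\{\alpha_\ell\}$ in \textsc{A\,II} and $\phiup=\{\alpha_{\ell-1}\}$ in \textsc{D\,II} follow by the diagram symmetry.

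\smallskip
\emph{Necessity, reductions.} Assume \eqref{e3.13}. If $\gt$ is not simple then $\gt^{\sigmaup}$ is of complex type; writing $\theta_j$ for the highest root of the $j$-th ideal, $\sigmaup(\theta_1)=\theta_2$ up to labelling, so if $\phiup$ met both components then $\theta_1\in\Qc_{\;\phiup}^{n}\cap\sigmaup(\Qc_{\;\phiup}^{n})$, impossible: this gives case (2). Let now $\gt$ be simple; if $\gt^{\sigmaup}$ is compact we are in case (1). Otherwise the highest root $\theta$ of $\gt$ has full support and is non-imaginary (its support is not contained in the black part of $\mathcal{S}$), hence $\theta\in\Qc_{\;\phiup}^{n}$; by \eqref{e3.13}, $\supp(\sigmaup(\theta))\cap\phiup=\emptyset$, while $\supp(\sigmaup(\theta))$ contains every white node of $\mathcal{S}$ by \eqref{sigmaup} and \eqref{Adapted Weyl}. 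Therefore $\phiup$ contains no white node, i.e. $\phiup\subseteq\Bz^{\sigmaup}_{\bullet}$.

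\smallskip
\emph{Necessity, the classification.} It remains to show that for $\gt$ simple non-compact and $\emptyset\neq\phiup\subseteq\Bz^{\sigmaup}_{\bullet}$ the criterion holds precisely for the entries of Table~\ref{complex ones}. I would do this by running through the Satake diagrams with non-empty black part: the root $\theta$ already forces $\phiup$ to avoid the black nodes occurring in $\supp(\sigmaup(\theta))$; combining this with the height-two roots $\alpha+\alpha''$ ($\alpha\in\phiup$, $\alpha''$ a white neighbour of $\alpha$, for which $\sigmaup(\alpha+\alpha'')=\sigmaup(\alpha'')-\alpha$) and the explicit $e_i$-action of $\sigmaup$, one produces, for every diagram and every admissible $\phiup$ not in the table, a positive non-imaginary root $\beta$ with $\supp(\beta)\cap\phiup\neq\emptyset\neq\supp(\sigmaup(\beta))\cap\phiup$, contradicting \eqref{e3.13}; the two surviving cases are those checked in the sufficiency step. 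I expect this final case-by-case verification to be the only laborious part of the proof: everything preceding it is formal, but eliminating the many remaining real forms and cross-markings requires the explicit root-system bookkeeping just described.
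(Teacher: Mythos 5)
Your proof of the first equivalence is the same as the paper's: the paper observes that $\Rad$ is the disjoint union of $\Qc_{\;\phiup}\cup\sigmaup(\Qc_{\;\phiup})$ and $\Qc_{\;\phiup}^{-n}\cap\sigmaup(\Qc_{\;\phiup}^{-n})$, so that total complexity ($\qt_{\phiup}+\sigmaup(\qt_{\phiup})=\gt$) is equivalent to the emptiness of the latter set, which is \eqref{e3.13} after applying $\beta\mapsto{-}\beta$; your complement argument is this identity in a slightly different order, and your reformulation via supports is correct. Where you diverge is in the classification: the paper disposes of case (1) by the geometric remark that an open minimal orbit coincides with $\sfF$ (automatic for compact $\gs$) and then simply cites \cite[Cor.\,1.7]{Wo2001} and \cite[Th.\,10.2]{AlMeNa2006} for cases (2) and (3), whereas you give self-contained root-theoretic verifications of sufficiency for all three cases and a clean necessity reduction (highest roots force $\phiup$ to lie in one component in the complex type, and force $\phiup\subseteq\Bz^{\sigmaup}_{\bullet}$ in the simple non-compact case, since $\supp(\sigmaup(\theta))$ contains every white node). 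These checks are all correct. The one piece you do not carry out --- eliminating all remaining Satake diagrams and cross-markings with $\emptyset\neq\phiup\subseteq\Bz^{\sigmaup}_{\bullet}$ other than the two table entries --- is exactly the piece the paper also leaves to the cited references, so your proposal is no less complete than the paper's own argument; just be aware that this residual case-by-case verification is where the real content of statement (3) lives.
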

\begin{proof} Condition \eqref{e3.13} is a consequence of the fact that $\Rad$ is a disjoint union
\begin{equation*}
 \Rad=\left(\Qc_{\;\phiup}\cup\sigmaup(\Qc_{\;\phiup})\right)\cup
 \left(\Qc_{\;\phiup}^{-n}\cap\sigmaup(\Qc_{\;\phiup}^{-n})\right).
\end{equation*}
\par
An open minimal orbit $\sfM$, being also closed, coincides with the flag manifold
$\sfF$. This is always the case 
when $\gs$ is compact (see e.g. \cite{Mo1950,Wo2001}).
Cases (2) and (3)  follow easily from \eqref{e3.13}
(see also \cite[Corollary 1.7]{Wo2001}).
\end{proof}

\section{A bound for the k-nondegeneracy of the closed orbit}\label{s4}
Finite nondegeneracy  of the  
closed orbits  of real forms was characterized in \cite[Th.11.5]{AlMeNa2006} 
in terms of their description by cross-marked Satake diagrams.  
Using Prop.\ref{Prop4.1} we can  discuss the order of finite
nondegeneracy   in  terms of the chain \eqref{e1.1} for $(\gt^{\sigmaup},\qt_{\phiup})$
 and can be investigated  by using 
the combinatorics of the root system. 
Let us set 
\begin{equation}
 \Qc_{\;\phiup}^{(\,k)}=\{\alpha\in\Rad\mid \gt_{\alpha}\subseteq
 \qt_{\,\,\phiup}^{(k)}\}, \;\;\text{so that}\;\;
 \qt_{\,\,\phiup}^{(k)}=\hg\oplus{\sum}_{\alpha\in\Qc_{\;\phiup}^{(k)}}
 \gt_{\alpha}.
\end{equation}
They can be defined recursively by  
\begin{equation} 
\begin{cases}
\Qc_{\;\phiup}^{(0)}=\Qc_{\;\phiup},\\[4pt]
 \Qc_{\;\phiup}^{(1)}=\{\alpha\in\Qc_{\;\phiup}\mid (\alpha+\sigmaup({\Qc}_{\;\phiup}))
 \cap\Rad\subseteq \Qc_{\;\phiup}\cup\sigmaup({\Qc}_{\;\phiup})\},\\[4pt]
 \begin{aligned}
 \Qc_{\;\phiup}^{(k)}=\{\alpha\in\Qc_{\;\phiup}^{(k-1)}\,{\mid}\, 
 (\alpha{+}\sigmaup({\Qc}_{\;\phiup}))\,{\cap}\,\Rad\,{\subseteq}\, \Qc_{\;\phiup}^{(k-1)}\,{\cup}\,
 \sigmaup({\Qc}_{\;\phiup})\},\; 
 \text{for $k{>}1.$}\end{aligned}
\end{cases}
\end{equation}
This yields a characterization of finite  nondegeneracy in terms 
of roots:
\begin{prop}\label{prop5.3}
 A necessary and sufficient condition for $(\gt^{\sigmaup},\qt_{\phiup})$ 
 being finitely nondegenerate of order $k$ 
 is that $\Qc_{\;\phiup}^{\,(k-1)}{\supsetneqq}\,\Qc_{\;\phiup}^{\,(k)}\,{=}\,
 \Qc_{\;\phiup}\,{\cap}\,\sigmaup({\Qc}_{\;\phiup}).$ \qed
\end{prop}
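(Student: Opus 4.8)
The plan is to reduce the statement to Definition~\ref{knndg} by translating the subalgebra chain $\{\qt_{\phiup}^{(k)}\}$ of \eqref{e1.1} into the root chain $\{\Qc_{\phiup}^{(k)}\}$. The key point is that every object entering the recursion is graded by the fixed Cartan subalgebra $\hg$: since $\sigmaup$ fixes $\hs$ pointwise it stabilizes $\hg=\C\otimes_{\R}\hs$ and permutes root spaces, $\sigmaup(\gt_{\alpha})=\gt_{\sigmaup(\alpha)}$, so $\sigmaup(\qt_{\phiup})=\hg\oplus\sum_{\alpha\in\sigmaup(\Qc_{\phiup})}\gt_{\alpha}$ is $\hg$-graded, and so is $\qt_{\phiup}^{(0)}=\qt_{\phiup}$ by \eqref{e3.5}.

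First I would prove by induction on $k\ge 0$ that $\qt_{\phiup}^{(k)}=\hg\oplus\sum_{\alpha\in\Qc_{\phiup}^{(k)}}\gt_{\alpha}$, with $\Qc_{\phiup}^{(k)}$ given by the displayed recursion. The case $k=0$ is the definition. For the inductive step, assume $\qt_{\phiup}^{(k-1)}$ is $\hg$-graded; then $\qt_{\phiup}^{(k-1)}+\sigmaup(\qt_{\phiup})=\hg\oplus\sum_{\alpha\in\Qc_{\phiup}^{(k-1)}\cup\sigmaup(\Qc_{\phiup})}\gt_{\alpha}$ is $\hg$-graded as well. Writing an arbitrary $Z\in\qt_{\phiup}^{(k-1)}$ as $Z=H_{0}+\sum_{\gamma}Z_{\gamma}$ with $Z_{\gamma}\in\gt_{\gamma}$, $\gamma\in\Qc_{\phiup}^{(k-1)}$, and using that $[\qt_{\phiup}^{(k-1)},\hg]\subseteq\qt_{\phiup}^{(k-1)}$ and $[\hg,\sigmaup(\qt_{\phiup})]\subseteq\sigmaup(\qt_{\phiup})$, the inclusion $[Z,\sigmaup(\qt_{\phiup})]\subseteq\qt_{\phiup}^{(k-1)}+\sigmaup(\qt_{\phiup})$ may be tested against single root vectors $W_{\delta}\in\gt_{\delta}$, $\delta\in\sigmaup(\Qc_{\phiup})$. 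Since $[\gt_{\gamma},\gt_{\delta}]=\gt_{\gamma+\delta}$ whenever $\gamma+\delta\in\Rad$ (and lies in $\hg$ when $\gamma+\delta=0$, and is $\{0\}$ otherwise), and since for $\delta$ fixed the degrees $\gamma+\delta$ are pairwise distinct, the test decouples over root spaces: $\hg\subseteq\qt_{\phiup}^{(k)}$ always, and $\gt_{\gamma}\subseteq\qt_{\phiup}^{(k)}$ precisely when $\gamma\in\Qc_{\phiup}^{(k-1)}$ and $(\gamma+\sigmaup(\Qc_{\phiup}))\cap\Rad\subseteq\Qc_{\phiup}^{(k-1)}\cup\sigmaup(\Qc_{\phiup})$, which is the definition of $\Qc_{\phiup}^{(k)}$. (Here one uses the standard nonvanishing $[\gt_{\gamma},\gt_{\delta}]\neq 0$ for root vectors when $\gamma+\delta\in\Rad$, and that $\gt_{\delta}$ contains nonzero vectors.)

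Next I would record the easier identity $\qt_{\phiup}\cap\sigmaup(\qt_{\phiup})=\hg\oplus\sum_{\alpha\in\Qc_{\phiup}\cap\sigmaup(\Qc_{\phiup})}\gt_{\alpha}$, which is immediate because an intersection of $\hg$-graded subspaces is $\hg$-graded with graded pieces the intersections of the pieces; recall from \S\ref{s2} that this is exactly the complexification of the isotropy $\gt^{\sigmaup}\cap\qt_{\phiup}$, and that $\Qc_{\phiup}\cap\sigmaup(\Qc_{\phiup})\subseteq\Qc_{\phiup}^{(k)}$ for all $k$ by the recursion (matching $\qt_{\phiup}\cap\sigmaup(\qt_{\phiup})\subseteq\qt_{\phiup}^{(k)}$). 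Comparing the (one-dimensional) graded pieces of these subspaces, the relation $\qt_{\phiup}^{(k-1)}\supsetneqq\qt_{\phiup}^{(k)}=\qt_{\phiup}\cap\sigmaup(\qt_{\phiup})$ holds if and only if $\Qc_{\phiup}^{(k-1)}\supsetneqq\Qc_{\phiup}^{(k)}=\Qc_{\phiup}\cap\sigmaup(\Qc_{\phiup})$; by Definition~\ref{knndg} the former is precisely $k$-nondegeneracy of $(\gt^{\sigmaup},\qt_{\phiup})$ (and the chain stabilizes by Proposition~\ref{Prop4.1} and the discussion of \S\ref{s2}), which completes the argument.

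I expect the only substantive work to be the decoupling argument in the inductive step, i.e.\ making precise that membership in $\qt_{\phiup}^{(k)}$ can be checked root space by root space. The facts needed are standard structure theory—$\sigmaup(\gt_{\alpha})=\gt_{\sigmaup(\alpha)}$, the non-degenerate bracket $[\gt_{\alpha},\gt_{\beta}]=\gt_{\alpha+\beta}$ for $\alpha+\beta\in\Rad$—together with the observation that one may restrict to homogeneous test elements $W_{\delta}$ because $\sigmaup(\qt_{\phiup})$ is spanned by $\hg$ and root vectors; everything else is bookkeeping with the gradings.
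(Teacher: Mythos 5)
Your argument is correct and is exactly the justification the paper leaves implicit: Proposition~\ref{prop5.3} is stated with no proof because it is regarded as immediate from the recursion defining $\Qc_{\;\phiup}^{(k)}$ together with the fact that each $\qt_{\phiup}^{(k)}$ is $\hg$-graded. Your inductive decoupling over root spaces (using that the degrees $\gamma+\delta$ are pairwise distinct for fixed $\delta$ and that $\qt_{\phiup}^{(k-1)}+\sigmaup(\qt_{\phiup})$ is graded) is the standard way to make that precise, so this matches the paper's intended route.
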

By Proposition \ref{prop5.3} we obtain:
\begin{rmk}[{\cite[Lemma\,12.1]{AlMeNa2006}}]
A necessary and sufficient condition for $(\gt^{\sigmaup},\qt_{\phiup})$ being
 finitely nondegenerate   
is that  for all  $\beta\,{\in}\,\Qc_{\;\phiup}\,{\cap}\,\sigmaup({\Qc}_{\;\phiup}^{-n}),$ 
one can find
$k\in\Z_{+},$ and $\alpha_{1},\hdots,\alpha_{k}\,{\in}\,\sigmaup({\Qc}_{\,\, \phiup})$ 
such that 
\begin{equation}\label{e4.3}
\begin{cases}
 \gamma_h\,{\doteq}\,\beta{+}{\sum}_{i=1}^{h}\alpha_{i}\,{\in}\,\Rad,\;\forall 
  1{\leq} h {\leq} k,\\
 \gamma_k\in\Qc_{\;\phiup}^{-n}\cup\sigmaup({\Qc}_{\;\phiup}^{-n}).
 \end{cases}
\end{equation}
\end{rmk}
\begin{ntz}
Denote by  $\ktt^{\sigmaup}_{\phiup}(\beta)$ the smallest 
positive integer for which \eqref{e4.3} holds true. Set $\ktt^{\sigmaup}_{\phiup}(\beta)\,{=}\,{+}\infty$
if there is no $k$ for which
\eqref{e4.3} is satisfied.
\end{ntz}
\begin{prop}[{\cite[Lemma\,2.6]{MaMeNa2021}}] \label{p4.3} Let
 $\beta\,{\in}\,\Qc_{\;\phiup}\,{\cap}\,\sigmaup({\Qc}_{\;\phiup}^{-n})$ and
 assume that $k\,{=}\,\ktt^{\sigmaup}_{\phiup}(\beta)\,{<}\,{+}\infty$.
 Then any sequence 
 $\alpha_{1},\hdots,\alpha_{k}$ satisfying 
\eqref{e4.3}
has the properties: 
\begin{itemize}
\item[$(i)$]
 $\alpha_{i}\in\sigmaup({\Qc}_{\;\phiup})\,{\cap}\,\Qc_{\;\phiup}^{-n}$ for all $1{\leq}i{\leq}k$;
 \item[$(ii)$] $\beta+{\sum}_{i{\leq}h}\alpha_{i}\in\Qc_{\;\phiup}\,{\cap}\,\sigmaup({\Qc}_{\;\phiup}^{-n})$
 for all $h{<}k$;
 \item[$(iii)$] \eqref{e4.3} is satisfied by all permutations of 
 $\alpha_{1},\hdots,\alpha_{k}$; 
 \item[$(iv)$] $\alpha_{i}{+}\alpha_{j}\,{\notin}\,\Rad$ for all $1{\leq}i{<}j{\leq}k$.\qed
\end{itemize}
\end{prop}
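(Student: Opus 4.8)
The plan is to argue purely inside the root system $\Rad$. I would record membership in the four sets in play by the signs of two $\R$-linear functionals on $\sfE$: namely $\xiup_{\phiup}$ itself and its twist $\xiup_{\phiup}\circ\sigmaup$, so that $\Qc_{\;\phiup}{=}\{\gamma:\xiup_{\phiup}(\gamma){\geq}0\}$, $\sigmaup(\Qc_{\;\phiup}){=}\{\gamma:\xiup_{\phiup}(\sigmaup\gamma){\geq}0\}$, $\Qc_{\;\phiup}^{-n}{=}\{\gamma:\xiup_{\phiup}(\gamma){<}0\}$ and $\sigmaup(\Qc_{\;\phiup}^{-n}){=}\{\gamma:\xiup_{\phiup}(\sigmaup\gamma){<}0\}$; in particular $\Qc_{\;\phiup}$ and $\sigmaup(\Qc_{\;\phiup})$ are additively closed, i.e.\ they contain the sum of any two of their elements that happens to be a root. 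The single driving device is the \emph{minimality} of $k{=}\ktt^{\sigmaup}_{\phiup}(\beta)$: no proper initial segment $\alpha_{1},\ldots,\alpha_{h}$ with $h{<}k$ can satisfy \eqref{e4.3}. Besides that I would use only the $S$-chamber property \eqref{Adapted Weyl} and the minimal-orbit relation \eqref{blacknodes}.

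I would establish (ii) first, together with the sharper statement $\gamma_{k}{\in}\Qc_{\;\phiup}^{-n}{\cap}\sigmaup(\Qc_{\;\phiup}^{-n})$. Tracking the pair $\bigl(\xiup_{\phiup}(\gamma_{h}),\,\xiup_{\phiup}(\sigmaup\gamma_{h})\bigr)$ along $h{=}0,\ldots,k$, minimality rules out each $\gamma_{h}$, $h{<}k$, being a legitimate terminus of \eqref{e4.3}, which already confines $\gamma_{h}$ on the $\xiup_{\phiup}$-side. The delicate point is to show that none of these intermediate roots can drift out of $\sigmaup(\Qc_{\;\phiup})$ without leaving $\Qc_{\;\phiup}$, equivalently to pin $\gamma_{h}$ to $\Qc_{\;\phiup}{\cap}\sigmaup(\Qc_{\;\phiup}^{-n})$ for $h{<}k$; here I would play the $\xiup_{\phiup}$-trajectory against the $\xiup_{\phiup}\circ\sigmaup$-trajectory using the Cartan symmetry $\vartheta{=}{-}\sigmaup$ and \eqref{blacknodes}, which forces any root sitting in $\Qc_{\;\phiup}^{n}{\cap}\sigmaup(\Qc_{\;\phiup}^{-n})$ to be imaginary.

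Next I would prove (iv) for consecutive indices: if $\alpha_{h}{+}\alpha_{h+1}{\in}\Rad$ then, by additive closedness, $\alpha_{h}{+}\alpha_{h+1}{\in}\sigmaup(\Qc_{\;\phiup})$, and replacing the entries $\alpha_{h},\alpha_{h+1}$ by their sum yields a sequence of length $k{-}1$ whose partial sums are among the old $\gamma_{j}$ and whose terminus is still $\gamma_{k}$ --- contradicting minimality; the same device gives $\alpha_{h}{\neq}{-}\alpha_{h+1}$. Then (iii) reduces, via adjacent transpositions generating the symmetric group, to checking that swapping $\alpha_{h}$ with $\alpha_{h+1}$ again satisfies \eqref{e4.3}: all partial sums but $\gamma_{h-1}{+}\alpha_{h+1}$ are unchanged, and $\gamma_{h-1}{+}\alpha_{h+1}{\in}\Rad$ follows from a Jacobi-identity computation --- from $\gamma_{h-1}$, $\gamma_{h-1}{+}\alpha_{h}$, $\gamma_{h-1}{+}\alpha_{h}{+}\alpha_{h+1}{\in}\Rad$ and $\alpha_{h}{+}\alpha_{h+1}{\notin}\Rad$ one gets $[\gt_{\gamma_{h-1}},\gt_{\alpha_{h+1}}]{\neq}0$ --- once the degenerate possibility $\gamma_{h-1}{=}{-}\alpha_{h+1}$ has been excluded (which, via (ii) and \eqref{blacknodes}, can happen only when $\alpha_{h+1}$ is imaginary, a case one treats directly). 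With (iii) available, (iv) in general follows by first moving $\alpha_{i},\alpha_{j}$ into the last two slots and reducing to the consecutive case; and (i) follows because (ii) makes $\xiup_{\phiup}$ nonincreasing along the chain and negative only at the last step, so $\xiup_{\phiup}(\alpha_{k}){<}0$, hence $\xiup_{\phiup}(\alpha_{i}){<}0$ for every $i$ after permuting $\alpha_{i}$ to the end, i.e.\ $\alpha_{i}{\in}\Qc_{\;\phiup}^{-n}$; the inclusion $\alpha_{i}{\in}\sigmaup(\Qc_{\;\phiup})$ is the hypothesis on the sequence.

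The hard part, I expect, is the root-combinatorial lemma behind (iii): guaranteeing that every partial sum stays in $\Rad$ under an arbitrary reordering of the steps. The Jacobi/root-string input has to be made uniform over all Dynkin types --- including $B_{\ell},C_{\ell},F_{4},G_{2}$, where root strings can be as long as four --- and the degenerate configurations (a partial sum equal to minus a remaining step, or two steps summing to zero) must be eliminated using (ii), \eqref{Adapted Weyl} and \eqref{blacknodes} rather than by counting signs alone.
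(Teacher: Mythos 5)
The paper does not actually prove Proposition~\ref{p4.3}: the statement is imported from \cite[Lemma 2.6]{MaMeNa2021} and closed with an end-of-proof mark, so there is no in-paper argument to measure your proposal against. Judged on its own, your architecture is the natural one and is essentially correct: (ii) from minimality of $k$; consecutive instances of (iv) by merging two steps into one (the merged step stays in $\sigmaup(\Qc_{\;\phiup})$ by additive closedness, the new partial sums are among the old ones, and the chain shortens, contradicting minimality); (iii) by adjacent transpositions plus the Jacobi identity; and then (i) and the general case of (iv) by permuting the relevant step(s) to the end. Two points are left softer than they should be. First, in (ii) you have the two halves in the wrong logical order: minimality of $k$ only excludes $\gamma_{h}\,{\in}\,\Qc^{-n}_{\;\phiup}\,{\cap}\,\sigmaup(\Qc^{-n}_{\;\phiup})$ for $h{<}k$, so it yields $\gamma_{h}\,{\in}\,\Qc_{\;\phiup}$ only \emph{after} one knows $\gamma_{h}\,{\in}\,\sigmaup(\Qc^{-n}_{\;\phiup})$; the latter is not the ``delicate point'' requiring $\vartheta$ and \eqref{blacknodes}, but an immediate monotonicity statement, since $\xiup_{\phiup}(\sigmaup(\gamma_{h}))=\xiup_{\phiup}(\sigmaup(\beta))+{\sum}_{i\leq h}\xiup_{\phiup}(\sigmaup(\alpha_{i}))$ is nondecreasing in $h$ and negative at $h{=}k$, hence negative for every $h$. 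Second, the degenerate case $\gamma_{h-1}{+}\alpha_{h+1}{=}0$ in the transposition step is flagged but not closed; your route through imaginarity is a detour, and the clean exclusion is that $\gamma_{h-1}{=}{-}\alpha_{h+1}$ would force $\gamma_{h+1}{=}\alpha_{h}$, which would then lie simultaneously in $\sigmaup(\Qc_{\;\phiup})$ (as a step of the chain) and in $\sigmaup(\Qc^{-n}_{\;\phiup})$ (as a partial sum, by (ii) or by the terminal condition of \eqref{e4.3}), which is impossible. With these two repairs the sketch becomes a complete proof; your worry about long root strings in types $B_{\ell}$, $C_{\ell}$, $F_{4}$, $G_{2}$ is unfounded, since the Jacobi computation you describe is type-independent.
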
 

\begin{exam}
We continue with the Example \ref{ex2.5}.  We obtain 
\begin{align*}
 &\Qc^{-n}_{\,\,\phiup}\cap\sigmaup({\Qc}^{-n}_{\phiup})=\{\alpha_1{+}\alpha_2{+}
 \alpha_3\,{=}\,e_{1}{-}e_{4}\},\\
 &\sigmaup({\Qc}_{\;\phiup})
 \cap\Qc^{-n}_{\,\,\phiup}=\{\alpha_1{+}\alpha_2\,{=}\,e_{1}{-}
 e_{3},\alpha_2\,{=}\,e_{2}{-}e_{3},\alpha_2{+}\alpha_3\,{=}\,e_{2}{-}e_{4}\},\\
 &\Qc_{\;\phiup}\cap\sigmaup({\Qc}^{-n}_{\phiup})
 =\{\alpha_3\,{=}\,e_{3}{-}e_{4},-\alpha_2\,{=}\,e_{3}{-}e_{2},\alpha_1\,{=}\,e_{1}{-}e_{2}\}.
\end{align*}
We obtain that $(\gt^{\sigmaup},\qt_{\phiup})$
is fundamental and finitely nondegenerate of order~$2$,  since $\ktt^{\sigmaup}_{\phiup}(e_{3}{-}e_{2})\,{\geq}2$
and equals $2$ because 
\begin{equation*}
 e_{1}-e_{4}=(e_{3}-e_{2})+(e_{1}{-}e_{3})+(e_{2}{-}e_{4}).\;\;
\end{equation*}
\end{exam}

\begin{rmk}
Since $\xiup_{\phiup}(\alpha){\leq}-1$ for all $\alpha\,{\in}\Qc^{-n}_{\,\,\phiup},
$ 
 if $\beta\,{\in}\,\Qc_{\;\phiup}\,{\cap}\,\sigmaup({\Qc}^{-n}_{\,\,\phiup})$ and \hfill\par\noindent
 $\ktt^{\sigmaup}_{\phiup}(\beta){<}{+}\infty,$ then 
\begin{equation}\label{eq2.10}
 \ktt^{\sigmaup}_{\phiup}(\beta)\leq 1+\xiup_{\phiup}(\beta).
\end{equation}
\end{rmk}
\begin{cor}\label{c4.6}
 If $\alpha\,{\in}\, \Qc^{\,r}_{\;\phiup}\,{\cap}\,\sigmaup(\Qc_{\;\phiup}^{-n}),$ 
 then $\ktt^{\sigmaup}_{\phiup}(\alpha)=1$   or~$\ktt^{\sigmaup}_{\phiup}
 (\alpha)={+}\infty.$\qed
\end{cor}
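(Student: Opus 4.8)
The plan is to read Corollary~\ref{c4.6} off the bound \eqref{eq2.10} together with the description of $\Qc^{r}_{\;\phiup}$ recorded in \eqref{equ2.5}. First I would note that $\Qc^{r}_{\;\phiup}\subseteq\Qc_{\;\phiup}$, so the hypothesis $\alpha\in\Qc^{r}_{\;\phiup}\cap\sigmaup(\Qc^{-n}_{\;\phiup})$ in particular places $\alpha$ in $\Qc_{\;\phiup}\cap\sigmaup(\Qc^{-n}_{\;\phiup})$; hence $\ktt^{\sigmaup}_{\phiup}(\alpha)$ is defined and, if it is finite, \eqref{eq2.10} gives $\ktt^{\sigmaup}_{\phiup}(\alpha)\le 1+\xiup_{\phiup}(\alpha)$. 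By \eqref{equ2.5}, belonging to $\Qc^{r}_{\;\phiup}$ is exactly the condition $\xiup_{\phiup}(\alpha)=0$, so the bound collapses to $\ktt^{\sigmaup}_{\phiup}(\alpha)\le 1$; since $\ktt^{\sigmaup}_{\phiup}(\alpha)$ is by definition a positive integer whenever finite, it must equal $1$, which is the asserted dichotomy.

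For an argument that is self-contained and makes transparent why the reductive-part hypothesis is the relevant one, I would instead reason by contradiction directly from Proposition~\ref{p4.3}. Assume $k=\ktt^{\sigmaup}_{\phiup}(\alpha)$ is finite with $k\ge 2$, and choose $\alpha_{1},\dots,\alpha_{k}\in\sigmaup(\Qc_{\;\phiup})$ realizing \eqref{e4.3}. Part $(i)$ of Proposition~\ref{p4.3} gives $\alpha_{1}\in\Qc^{-n}_{\;\phiup}$, hence $\xiup_{\phiup}(\alpha_{1})\le -1$ ($\xiup_{\phiup}$ being integer valued on roots), while part $(ii)$, applied at $h=1<k$, gives $\gamma_{1}=\alpha+\alpha_{1}\in\Qc_{\;\phiup}$, hence $\xiup_{\phiup}(\gamma_{1})\ge 0$. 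But $\xiup_{\phiup}(\gamma_{1})=\xiup_{\phiup}(\alpha)+\xiup_{\phiup}(\alpha_{1})=\xiup_{\phiup}(\alpha_{1})\le -1$, since $\xiup_{\phiup}(\alpha)=0$ by \eqref{equ2.5}, a contradiction. Hence $k\le 1$, so $k=1$ whenever finite.

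There is no genuine obstacle here: the content is simply that the grading functional $\xiup_{\phiup}$, being zero on $\alpha$, leaves no room for an intermediate root $\gamma_{1}=\alpha+\alpha_{1}$ with $\alpha_{1}\in\sigmaup(\Qc_{\;\phiup})\cap\Qc^{-n}_{\;\phiup}$ to remain in $\Qc_{\;\phiup}$, so a chain \eqref{e4.3} for $\alpha$, if it exists at all, must consist of a single step. The only points demanding a moment's care are the elementary translation $\alpha\in\Qc^{r}_{\;\phiup}\iff\xiup_{\phiup}(\alpha)=0$ from \eqref{equ2.5}, the inclusion $\Qc^{r}_{\;\phiup}\subseteq\Qc_{\;\phiup}$ that lets one invoke \eqref{eq2.10} and Proposition~\ref{p4.3}, and the integrality of $\xiup_{\phiup}$ on roots used to pass from $\xiup_{\phiup}(\alpha_{1})<0$ to $\xiup_{\phiup}(\alpha_{1})\le -1$.
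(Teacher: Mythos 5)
Your proposal is correct and follows the paper's own route: the corollary is stated there as an immediate consequence of the bound \eqref{eq2.10} combined with the characterization $\alpha\in\Qc^{r}_{\;\phiup}\Leftrightarrow\xiup_{\phiup}(\alpha)=0$ from Notation~\ref{n1}, which is exactly your first paragraph. Your second, contradiction-based argument via Proposition~\ref{p4.3} is a sound expanded version of the same grading computation and adds nothing essentially different.
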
 
We obtain also a useful criterion of finite nondegeneracy (cf. \cite[Thm.6.4]{AlMeNa2006b})
\begin{prop} \label{p2.9}
The parabolic  CR  algebra $(\gt^{\sigmaup},\qt_{\phiup})$
is 
finitely nondegenerate if and only if 
\begin{equation}\label{e2.12}
 \forall \beta\,{\in}\,\Qc_{\;\phiup}\cap\sigmaup({\Qc}^{-n}_{\;\phiup})\;\;\exists\, 
 \alpha\in\sigmaup({\Qc}_{\;\phiup})
 \cap\Qc^{-n}_{\;\phiup}\;\;\text{such that}\;\; \beta+\alpha\in\sigmaup(\Qc_{\;\phiup}^{-n}).
\end{equation}\qed
\end{prop}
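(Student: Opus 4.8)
The plan is to read the one-step condition \eqref{e2.12} against the characterization of finite nondegeneracy recalled above: $(\gt^{\sigmaup},\qt_{\phiup})$ is finitely nondegenerate precisely when $\ktt^{\sigmaup}_{\phiup}(\beta)<{+}\infty$ for every $\beta$ in $V:=\Qc_{\phiup}\cap\sigmaup(\Qc_{\phiup}^{-n})$, i.e. when every such $\beta$ admits a chain as in \eqref{e4.3}. I will use repeatedly that $\Rad=\Qc_{\phiup}\sqcup\Qc_{\phiup}^{-n}$, so that $\sigmaup(\Qc_{\phiup}^{-n})$ is the disjoint union of $V$ and $\Qc_{\phiup}^{-n}\cap\sigmaup(\Qc_{\phiup}^{-n})$, together with $\xiup_{\phiup}\geq 0$ on $\Qc_{\phiup}$ and $\xiup_{\phiup}\leq -1$ on $\Qc_{\phiup}^{-n}$, and Proposition~\ref{p4.3} as the principal tool.

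For the sufficiency of \eqref{e2.12} I would argue by iteration. Given $\beta\in V$, \eqref{e2.12} yields $\alpha_{1}\in\sigmaup(\Qc_{\phiup})\cap\Qc_{\phiup}^{-n}$ with $\gamma_{1}:=\beta+\alpha_{1}\in\sigmaup(\Qc_{\phiup}^{-n})$; if $\gamma_{1}\in\Qc_{\phiup}^{-n}$ we stop, otherwise $\gamma_{1}\in V$ and we feed it back into \eqref{e2.12}. Since $\xiup_{\phiup}(\gamma_{i})\leq\xiup_{\phiup}(\beta)-i$ while $\xiup_{\phiup}$ remains $\geq 0$ as long as we stay in $V$, the process stops after at most $1+\xiup_{\phiup}(\beta)$ steps, producing $\alpha_{1},\dots,\alpha_{k}\in\sigmaup(\Qc_{\phiup})$ with all partial sums in $\Rad$ and terminal value $\gamma_{k}\in\Qc_{\phiup}^{-n}\cap\sigmaup(\Qc_{\phiup}^{-n})$. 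This is exactly \eqref{e4.3}, so the CR algebra is finitely nondegenerate.

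For the necessity, fix $\beta\in V$ and put $k=\ktt^{\sigmaup}_{\phiup}(\beta)<{+}\infty$, with realizing chain $\alpha_{1},\dots,\alpha_{k}$; by Proposition~\ref{p4.3}(i), $\alpha_{1}\in\sigmaup(\Qc_{\phiup})\cap\Qc_{\phiup}^{-n}$. If $k\geq 2$, Proposition~\ref{p4.3}(ii) gives $\beta+\alpha_{1}\in\Qc_{\phiup}\cap\sigmaup(\Qc_{\phiup}^{-n})\subseteq\sigmaup(\Qc_{\phiup}^{-n})$, so $\alpha=\alpha_{1}$ already witnesses \eqref{e2.12}. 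The case $k=1$ is the delicate one: a priori $\gamma_{1}=\beta+\alpha_{1}$ lies only in $\Qc_{\phiup}^{-n}\cup\sigmaup(\Qc_{\phiup}^{-n})$, and one must dispose of the possibility $\gamma_{1}\in\Qc_{\phiup}^{-n}\cap\sigmaup(\Qc_{\phiup})$. In that event, applying $\xiup_{\phiup}$ to $\sigmaup(\gamma_{1})=\sigmaup(\beta)+\sigmaup(\alpha_{1})$ and using $\xiup_{\phiup}(\sigmaup(\beta))\leq -1$, $\xiup_{\phiup}(\sigmaup(\gamma_{1}))\geq 0$ forces $\xiup_{\phiup}(\sigmaup(\alpha_{1}))\geq 1$, hence $\alpha_{1}\in\Qc_{\phiup}^{-n}\cap\sigmaup(\Qc_{\phiup}^{n})$, which by \eqref{blacknodes} makes $\alpha_{1}$ imaginary, $\sigmaup(\alpha_{1})=-\alpha_{1}$; then $\sigmaup(\gamma_{1})\in V$ and $\sigmaup(\gamma_{1})+\alpha_{1}=\sigmaup(\beta)$. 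At this point I would invoke the Cartan symmetry $\vartheta(\gamma)=-\sigmaup(\gamma)$ of the Remark introducing \eqref{blacknodes}, which preserves $\Qc_{\phiup}^{-n}\cap\sigmaup(\Qc_{\phiup}^{n})$, together with a root-string analysis along the imaginary root $\alpha_{1}$, to produce a replacement $\alpha$ for $\beta$ whose bracket lands in $\sigmaup(\Qc_{\phiup}^{-n})$, thereby excluding the bad configuration.

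The main obstacle is exactly this base case $k=1$ in the necessity direction: showing that a single destabilising bracket can be chosen so as to carry $\gt_{\beta}$ into the $\sigmaup(\Qc_{\phiup}^{-n})$-part of the root system rather than merely into $\Qc_{\phiup}^{-n}$. Everything else reduces to bookkeeping with $\xiup_{\phiup}$ and to Proposition~\ref{p4.3}; the extra input needed to close the gap is the minimality of the orbit, recorded in \eqref{blacknodes}, which confines the asymmetric overlap $\Qc_{\phiup}^{n}\cap\sigmaup(\Qc_{\phiup}^{-n})$ to imaginary (black-node) roots.
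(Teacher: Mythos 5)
Your sufficiency argument (iterating \eqref{e2.12} and using that $\xiup_{\phiup}$ drops by at least $1$ at each step while remaining ${\geq}0$ on $\Qc_{\;\phiup}$) and the case $\ktt^{\sigmaup}_{\phiup}(\beta)\geq 2$ of the necessity (via Proposition~\ref{p4.3}$(i)$--$(ii)$) are correct, and they are the intended route — the paper states this proposition without proof. The genuine defect is the base case $\ktt^{\sigmaup}_{\phiup}(\beta)=1$ of the necessity, which you explicitly leave open: the sketched repair via the Cartan symmetry and \eqref{blacknodes} never actually produces the required $\alpha$, and it would in any case smuggle the minimality hypothesis into a step that does not need it.

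The obstacle you describe is, however, an artifact of reading the terminal condition of \eqref{e4.3} literally as a union. The condition $[\overline{Z}_{1},\hdots,\overline{Z}_{k},Z]\notin\qt_{\phiup}{+}\sigmaup(\qt_{\phiup})$ that $\ktt^{\sigmaup}_{\phiup}$ is meant to encode translates on root spaces to $\gamma_{k}\in\Rad\setminus\left(\Qc_{\;\phiup}\cup\sigmaup(\Qc_{\;\phiup})\right)=\Qc^{-n}_{\;\phiup}\cap\sigmaup(\Qc^{-n}_{\;\phiup})$; this intersection (not the union) is what the recursive definition of $\Qc^{(1)}_{\;\phiup}$ forces, what the worked example after Proposition~\ref{p4.3} checks (there $e_{1}{-}e_{4}$ is the unique element of $\Qc^{-n}_{\;\phiup}\cap\sigmaup(\Qc^{-n}_{\;\phiup})$), and what the proof of Theorem~\ref{th.5.1} uses when it invokes $\Qc^{-n}_{\;\phiup}\cap\sigmaup(\Qc^{-n}_{\;\phiup})\cap\Rad_{\;\bullet}^{\sigmaup}=\emptyset$. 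With that reading, a chain realizing $\ktt^{\sigmaup}_{\phiup}(\beta)=1$ satisfies $\beta{+}\alpha_{1}\in\Qc^{-n}_{\;\phiup}\cap\sigmaup(\Qc^{-n}_{\;\phiup})\subseteq\sigmaup(\Qc^{-n}_{\;\phiup})$, and $\alpha_{1}\in\sigmaup(\Qc_{\;\phiup})\cap\Qc^{-n}_{\;\phiup}$ by Proposition~\ref{p4.3}$(i)$, so $\alpha=\alpha_{1}$ witnesses \eqref{e2.12} with no further work: your ``bad configuration'' $\beta{+}\alpha_{1}\in\Qc^{-n}_{\;\phiup}\cap\sigmaup(\Qc_{\;\phiup})$ cannot occur for a realizing chain, since a root space in $\sigmaup(\qt_{\phiup})$ lies in $\qt_{\phiup}{+}\sigmaup(\qt_{\phiup})$ and so would not certify nondegeneracy at all. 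Once this is corrected, your argument closes and needs neither \eqref{blacknodes} nor minimality.
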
 
We reharse the result contained in \cite[Cor.2.19]{MaMeNa2021},
in a form which is suitable for our purposes.
\begin{thm}\label{th.5.1} 
The closed orbit is either holomorphically 
degenerate or 
is finitely nondegenerate with Levi order $\ktt$ less or equal than two. 
\end{thm}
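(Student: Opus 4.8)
The plan is to rephrase everything in the root‑theoretic language of \S\ref{s4}, to use that the closed orbit has a \emph{minimal} CR algebra (\cite[Th.\,5.8]{AlMeNa2006}), so that \eqref{blacknodes} is available, and then to reduce the bound to \cite[Cor.\,2.19]{MaMeNa2021}. First I would settle the dichotomy itself: by Proposition~\ref{p2.9}, if \eqref{e2.12} fails then $(\gt^{\sigmaup},\qt_{\phiup})$ is not finitely nondegenerate, i.e.\ holomorphically degenerate, and there is nothing left to prove; so assume \eqref{e2.12}, and, splitting $\gt^{\sigmaup}$ into simple ideals by Remark~\ref{prodotti}, assume $\gt^{\sigmaup}$ simple. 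Under \eqref{e2.12} every $\ktt^{\sigmaup}_{\phiup}(\beta)$ with $\beta\in\Qc_{\;\phiup}\cap\sigmaup(\Qc_{\;\phiup}^{-n})$ is finite, and by Proposition~\ref{prop5.3} the Levi order of $(\gt^{\sigmaup},\qt_{\phiup})$ is the largest of these integers; hence it suffices to show $\ktt^{\sigmaup}_{\phiup}(\beta)\le 2$ for every such $\beta$.

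Next I would split $\beta$ along the Levi--Chevalley decomposition of $\qt_{\phiup}$. If $\beta\in\Qc^{r}_{\,\,\phiup}\cap\sigmaup(\Qc_{\;\phiup}^{-n})$, then Corollary~\ref{c4.6} gives $\ktt^{\sigmaup}_{\phiup}(\beta)=1$. If $\beta\in\Qc^{n}_{\,\,\phiup}\cap\sigmaup(\Qc_{\;\phiup}^{-n})$ --- and this is the only place where closedness really enters --- then \eqref{blacknodes} forces $\beta\in\Rad_{\;\bullet}^{\sigmaup}$, i.e.\ $\sigmaup(\beta)=-\beta$, so $\beta$ is a combination of the black simple roots of $\mathcal{S}$. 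When $\xiup_{\phiup}(\beta)=1$ the inequality $\ktt^{\sigmaup}_{\phiup}(\beta)\le 1+\xiup_{\phiup}(\beta)$ of \eqref{eq2.10} already closes the argument, so the whole statement reduces to the case of an imaginary root $\beta\in\Qc^{n}_{\,\,\phiup}\cap\sigmaup(\Qc_{\;\phiup}^{-n})$ with $\xiup_{\phiup}(\beta)\ge 2$.

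For that remaining case I would invoke the key fact --- this is \cite[Cor.\,2.19]{MaMeNa2021} --- that for a minimal orbit such a $\beta$ nonetheless admits a \emph{single} admissible step: there is $\alpha\in\sigmaup(\Qc_{\;\phiup})\cap\Qc^{-n}_{\,\,\phiup}$ with $\beta+\alpha$ a root, $\beta+\alpha\in\sigmaup(\Qc_{\;\phiup}^{-n})$, and $\xiup_{\phiup}(\beta+\alpha)\le 0$. Granting this: if $\xiup_{\phiup}(\beta+\alpha)<0$ then $\beta+\alpha$ already lies outside $\Qc_{\;\phiup}\cup\sigmaup(\Qc_{\;\phiup})$, so $\ktt^{\sigmaup}_{\phiup}(\beta)=1$; if $\xiup_{\phiup}(\beta+\alpha)=0$ then $\beta+\alpha\in\Qc^{r}_{\,\,\phiup}\cap\sigmaup(\Qc_{\;\phiup}^{-n})$, to which Corollary~\ref{c4.6} applies and produces one further step, so $\ktt^{\sigmaup}_{\phiup}(\beta)\le 2$. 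Combined with Corollary~\ref{c4.6} in the reductive case and \eqref{eq2.10} in the case $\xiup_{\phiup}(\beta)=1$, this yields $\ktt\le 2$, and the theorem follows.

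The hard part is exactly the italicised one‑step reduction, i.e.\ establishing \cite[Cor.\,2.19]{MaMeNa2021} in the present setting. The approach is to take a minimal escape $\alpha_{1},\dots,\alpha_{k}$ of $\beta$ as in \eqref{e4.3} and to use Proposition~\ref{p4.3}: the $\alpha_{i}$ all lie in $\sigmaup(\Qc_{\;\phiup})\cap\Qc^{-n}_{\,\,\phiup}$, the partial sums stay in $\Qc_{\;\phiup}\cap\sigmaup(\Qc_{\;\phiup}^{-n})$ until the last step, the value of $\xiup_{\phiup}$ drops by at least one at each step, and no two of the $\alpha_{i}$ add to a root. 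Since $\beta$, and by \eqref{blacknodes} every intermediate partial sum that remains in the nilradical, is a combination of black simple roots, and since the $S$‑chamber condition \eqref{Adapted Weyl} keeps $\sigmaup$ compatible with positivity away from $\Rad_{\;\bullet}^{\sigmaup}$, one is reduced to an analysis of the relevant root strings, where one checks directly that the $k$ separate moves can be collapsed into a single one lowering $\xiup_{\phiup}$ to a non‑positive value; this is precisely the point at which the bound $3$ of the general case \cite[Th.\,2.11]{MaMeNa2021} improves to $2$. Finally, in the degenerate situations of Theorem~\ref{t3.6} (the compact real form and the totally complex cases) one has $\qt_{\phiup}+\sigmaup(\qt_{\phiup})=\gt$, so the orbit is either the whole flag manifold or holomorphically degenerate, hence falls under the first alternative and contributes no Levi order exceeding $2$.
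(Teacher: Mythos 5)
Your overall skeleton matches the paper's: decompose $\Qc_{\;\phiup}\cap\sigmaup(\Qc_{\;\phiup}^{-n})$ into its reductive part, where Corollary~\ref{c4.6} gives order $1$, and the imaginary part $\Qc^{n}_{\;\phiup}\cap\Rad^{\sigmaup}_{\;\bullet}$ of the nilradical, and then show that from an imaginary $\beta$ one admissible step already lands on a root of order $1$. But the step you yourself isolate as the ``hard part'' --- the one-step reduction for $\beta\in\Qc^{n}_{\;\phiup}\cap\Rad^{\sigmaup}_{\;\bullet}$ with $\xiup_{\phiup}(\beta)\geq 2$ --- is exactly the content of the theorem, and you do not prove it: you defer it to \cite[Cor.\,2.19]{MaMeNa2021} and to an unspecified ``analysis of the relevant root strings'' in which ``the $k$ separate moves can be collapsed into a single one''. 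That mechanism cannot work as described: by Proposition~\ref{p4.3}\,$(iv)$ no two of the roots $\alpha_{1},\hdots,\alpha_{k}$ of a minimal escape sequence sum to a root, so there is no single root move that replaces several of them. This is a genuine gap.

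The argument that actually closes the case is shorter and of a different nature. Since $\Qc^{-n}_{\;\phiup}\cap\sigmaup(\Qc^{-n}_{\;\phiup})$ contains no imaginary roots while $\beta$ is imaginary, at least one $\alpha_{i}$ in the minimal sequence is \emph{not} imaginary, and by Proposition~\ref{p4.3}\,$(iii)$ it may be taken to be $\alpha_{1}$. Then $\beta+\alpha_{1}$ is again non-imaginary (the sum of an imaginary and a non-imaginary root cannot be imaginary), lies in $\Qc_{\;\phiup}\cap\sigmaup(\Qc^{-n}_{\;\phiup})$ by $(ii)$, and the $S$-chamber property \eqref{Adapted Weyl} forces $\xiup_{\phiup}(\beta+\alpha_{1})=0$, i.e.\ $\beta+\alpha_{1}\in\Qc^{r}_{\;\phiup}\cap\sigmaup(\Qc^{-n}_{\;\phiup})$; Corollary~\ref{c4.6}, together with the finiteness witnessed by $\alpha_{2},\hdots,\alpha_{k}$, then gives $\ktt^{\sigmaup}_{\phiup}(\beta+\alpha_{1})=1$ and hence $k=2$. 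So the intermediate statement you assert is true, but it is established by this reordering-plus-imaginarity observation, not by collapsing root strings; as written your proposal is missing the idea that makes the bound $2$ (rather than the general bound $3$) come out. The surrounding material --- the dichotomy via Proposition~\ref{p2.9}, the reduction to simple ideals, the use of \eqref{blacknodes}, \eqref{eq2.10} and Corollary~\ref{c4.6} on the reductive part --- is correct and consistent with the paper.
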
 
\begin{proof}
Let $(\gt^{\sigmaup},\qt_{\phiup})$ be a parabolic  CR  algebra of the 
closed orbit,
 $\phiup$ being the set of crossed roots in a  
 cross-marked Satake diagram. 
 Since all roots $\beta$ 
 in ${\Qc}^{\,n}_{\,\,\phiup}$ are positive, 
by  \eqref{Adapted Weyl}, if $\beta\,{\in}\,{\Qc}^{\,n}_{\,\,\phiup}{\setminus}\Rad_{\,
\bullet},$ 
then its conjugate 
$\sigmaup({\beta})$ is positive, and hence has $\xiup_{\phiup}(\sigmaup(\beta))\,{\geq}\,0$, i.e.
\begin{equation}\label{asterisco}
 \xiup_{\phiup}(\beta)\geq{0},\;\;\forall \beta\,{\in}\sigmaup({\Qc}^{\,n}_{\,\,\phiup})
 \backslash\Rad_{\,\bullet}^{\sigmaup}.
\end{equation}
Let $\beta\,{\in}\,\Qc_{\;\phiup}\,{\cap}\,\sigmaup({\Qc}_{\;\phiup}^{-n}).$ If $
\beta\,{\notin}\,\Rad_{\,\bullet}^{\sigmaup},$
then $\xiup_{\phiup}(\beta)\,{=}\,0$ by \eqref{asterisco} and hence, by Cor.
~\ref{c4.6},
$\ktt^{\sigmaup}_{\phiup}(\beta)$ is either $1$ or ${+}\infty.$ \par
If
$\ktt^{\sigmaup}_{\phiup}(\beta)$ is an integer $k{>}1,$ 
then $\beta\,{\in}\,\Rad_{\,\bullet}^{\sigmaup}.$ 
Let $(\alpha_{1},\hdots,\alpha_{k})$ 
be a sequence satisfying \eqref{e4.3} and thus the conditions in 
Prop.\ref{p4.3}.\par
Since ${\Qc}^{-n}_{\;\phiup}\,{\cap}\,\sigmaup({\Qc}^{-n}_{\;\phiup})\,{\cap}\,\Rad_{\,
\bullet}\,{=}\,\emptyset,$
there is at least one root $\alpha_{i}$ which does not belong to $
\Rad_{\,\bullet}^{\sigmaup}.$
Again by $(iii)$ of Proposition \ref{p4.3} we can assume it is $\alpha_{1}.$ Then $\beta{+}
\alpha_{1}$
belongs to $(\Qc_{\;\phiup}\,{\cap}\,\sigmaup({\Qc}^{-n}_{\,\,\phiup})){\backslash}
\Rad_{\,\bullet}^{\sigmaup}$ and therefore,
by the first part of the proof, $\ktt^{\sigmaup}_{\phiup}(\beta{+}\alpha_{1})
{=}1.$
This implies that $k{=}2.$ The proof is complete. 
\end{proof}
\begin{rmk}
Theorem~\ref{th.5.1}  is a special case of a more general result 
proved in \cite{MaMeNa2021} which state that all 
finitely nondegenerate parabolic CR algebras have 
Levi order less or equal $3$.
A special class, generalizing minimal orbits, and called of 
\emph{minimal type} are proved there to  have also $\ktt{\leq}2$.
\end{rmk}
\begin{prop}\label{p4.10}
 Let $(\gs,\qt_{\phiup})$ be a  
  parabolic $CR$ algebra, corresponding  to a minimal orbit. Then 
\begin{equation}\label{e4.7}
 \Qc_{\;\phiup}\,{\cap}\,\sigmaup(\Qc_{\;\phiup}^{-n})=(\Qc^{n}_{\;\phiup}\cap\Rad
 ^{\sigmaup}
 _{\;\bullet})
 \cup (\Qc^{r}_{\;\phiup}\,{\cap}\,\sigmaup(\Qc_{\;\phiup}^{-n})).
\end{equation}
\par
  If $(\gs,\qt)$ is finitely nondegenerate,  then its order of nondegeneracy is 
\begin{itemize}
 \item $1$, if $\Qc^{n}_{\;\phiup}\cap\Rad
 ^{\sigmaup}
 _{\;\bullet}\,{=}\,\emptyset$;
 \item $\sup\{\ktt^{\sigmaup}_{\phiup}(\beta)\,{\mid}\,\beta\,{\in}\,\Qc^{n}_{\;\phiup}\,{\cap}\,\Rad
 _{\;\bullet}\}$,
   if $\Qc^{n}_{\;\phiup}\,{\cap}\,\Rad
   ^{\sigmaup}
   _{\;\bullet}\,{\neq}\,\emptyset$
\end{itemize}
\end{prop}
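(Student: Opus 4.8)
The plan is to prove the two assertions separately: the decomposition~\eqref{e4.7} is purely combinatorial, and the computation of the order then follows from it together with Corollary~\ref{c4.6} and the iterative description of the chain~\eqref{e1.1}. For~\eqref{e4.7}, since $\Qc_{\;\phiup}=\Qc^{r}_{\;\phiup}\sqcup\Qc^{n}_{\;\phiup}$ is a disjoint union, intersecting with $\sigmaup(\Qc_{\;\phiup}^{-n})$ yields
\[
\Qc_{\;\phiup}\cap\sigmaup(\Qc_{\;\phiup}^{-n})
=\bigl(\Qc^{r}_{\;\phiup}\cap\sigmaup(\Qc_{\;\phiup}^{-n})\bigr)\sqcup\bigl(\Qc^{n}_{\;\phiup}\cap\sigmaup(\Qc_{\;\phiup}^{-n})\bigr),
\]
so it is enough to identify $\Qc^{n}_{\;\phiup}\cap\sigmaup(\Qc_{\;\phiup}^{-n})$ with $\Qc^{n}_{\;\phiup}\cap\Rad_{\;\bullet}^{\sigmaup}$. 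The inclusion $\subseteq$ is exactly~\eqref{blacknodes}, which holds because $(\gs,\qt_{\phiup})$ corresponds to a minimal orbit. For the reverse inclusion, if $\beta\in\Rad_{\;\bullet}^{\sigmaup}$ then $\sigmaup(\beta)=-\beta$, so when moreover $\beta\in\Qc^{n}_{\;\phiup}$, i.e.\ $\xiup_{\phiup}(\beta)>0$, one gets $\xiup_{\phiup}(\sigmaup(\beta))=-\xiup_{\phiup}(\beta)<0$, hence $\sigmaup(\beta)\in\Qc_{\;\phiup}^{-n}$, that is $\beta\in\sigmaup(\Qc_{\;\phiup}^{-n})$. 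This establishes~\eqref{e4.7}.

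For the order of nondegeneracy, the first step is to tie the chain~\eqref{e1.1}, in its root form $\Qc_{\;\phiup}^{(k)}$, to the integers $\ktt^{\sigmaup}_{\phiup}(\beta)$: by induction on $k$, using Proposition~\ref{p4.3}, one shows that for every $\beta\in\Qc_{\;\phiup}\cap\sigmaup(\Qc_{\;\phiup}^{-n})$ one has $\beta\in\Qc_{\;\phiup}^{(k)}$ if and only if $\ktt^{\sigmaup}_{\phiup}(\beta)>k$. Since always $\Qc_{\;\phiup}\cap\sigmaup(\Qc_{\;\phiup})\subseteq\Qc_{\;\phiup}^{(k)}\subseteq\Qc_{\;\phiup}$ and $\Qc_{\;\phiup}=(\Qc_{\;\phiup}\cap\sigmaup(\Qc_{\;\phiup}))\sqcup(\Qc_{\;\phiup}\cap\sigmaup(\Qc_{\;\phiup}^{-n}))$, Proposition~\ref{prop5.3} then gives that, when $(\gs,\qt)$ is finitely nondegenerate, its order of nondegeneracy is
\[
\sup\{\,\ktt^{\sigmaup}_{\phiup}(\beta)\mid\beta\in\Qc_{\;\phiup}\cap\sigmaup(\Qc_{\;\phiup}^{-n})\,\}.
\]

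It remains to split this supremum by means of~\eqref{e4.7}. On the summand $\Qc^{r}_{\;\phiup}\cap\sigmaup(\Qc_{\;\phiup}^{-n})$, Corollary~\ref{c4.6} forces $\ktt^{\sigmaup}_{\phiup}(\beta)\in\{1,{+}\infty\}$, and finite nondegeneracy rules out ${+}\infty$, so each such $\beta$ contributes exactly $1$. If $\Qc^{n}_{\;\phiup}\cap\Rad_{\;\bullet}^{\sigmaup}=\emptyset$, then $\Qc_{\;\phiup}\cap\sigmaup(\Qc_{\;\phiup}^{-n})=\Qc^{r}_{\;\phiup}\cap\sigmaup(\Qc_{\;\phiup}^{-n})$, which is nonempty because a finitely nondegenerate CR algebra is not totally real, so the order equals $1$. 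If instead $\Qc^{n}_{\;\phiup}\cap\Rad_{\;\bullet}^{\sigmaup}\neq\emptyset$, then $\ktt^{\sigmaup}_{\phiup}(\beta)\geq1$ for every $\beta$ in this nonempty set, so the supremum over $\Qc_{\;\phiup}\cap\sigmaup(\Qc_{\;\phiup}^{-n})$ coincides with $\sup\{\ktt^{\sigmaup}_{\phiup}(\beta)\mid\beta\in\Qc^{n}_{\;\phiup}\cap\Rad_{\;\bullet}^{\sigmaup}\}$, as claimed (and this last supremum is at most $2$ by Theorem~\ref{th.5.1}).

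The step I expect to be the main obstacle is the inductive equivalence $\beta\in\Qc_{\;\phiup}^{(k)}\Longleftrightarrow\ktt^{\sigmaup}_{\phiup}(\beta)>k$, because one must reconcile the terminal condition of~\eqref{e4.3}, where the last partial sum is required only to lie in $\Qc_{\;\phiup}^{-n}\cup\sigmaup(\Qc_{\;\phiup}^{-n})$, with the condition defining $\Qc_{\;\phiup}^{(k)}$, where a root leaves the chain at step $k$ as soon as some $\beta+\alpha$ with $\alpha\in\sigmaup(\Qc_{\;\phiup})$ and $\beta+\alpha\in\Rad$ fails to lie in $\Qc_{\;\phiup}^{(k-1)}\cup\sigmaup(\Qc_{\;\phiup})$. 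Parts $(i)$ and $(ii)$ of Proposition~\ref{p4.3} are exactly what keeps all the intermediate partial sums inside $\Qc_{\;\phiup}\cap\sigmaup(\Qc_{\;\phiup}^{-n})$, so that the induction closes, and part $(iii)$ allows the $\alpha_i$ to be reordered whenever convenient.
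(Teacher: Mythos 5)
Your proof is correct and follows essentially the same route as the paper: the decomposition \eqref{e4.7} comes down to the $S$-chamber property of minimal orbits (you invoke it via \eqref{blacknodes} where the paper runs the real/complex/imaginary case analysis directly), and the order statement is then read off from Corollary~\ref{c4.6}. The only difference is one of explicitness — you spell out the bridge between the chain $\Qc_{\;\phiup}^{(k)}$ and the integers $\ktt^{\sigmaup}_{\phiup}(\beta)$, which the paper leaves implicit — so no further comparison is needed.
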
 
\begin{proof}
We assume, as we can, that $(\gs,\qt_{\phiup})$ is described by a cross-marked Satake diagram.
In particular, complex positive roots have a positive $\sigmaup$-conjugate. This implies
that all complex roots that do not belong to $\sigmaup(\Qc_{\;\phiup})$ are negative and hence,
if they are in $\Qc_{\;\phiup}$, they 
belong to $\Qc_{\;\phiup}^{r}$. On the other hand,
imaginary roots not belonging to $\sigmaup(\Qc_{\;\phiup})$ belong
to $\Qc^{n}_{\;\phiup}$.
\par
The statement about the order of nondegeneracy follows from Corollary\,\ref{c4.6}.
\end{proof}

\section{$\Gf^{\sigmaup}$-equivariant  CR  fibrations}\label{s5}
In this section we describe  the canonical 
$G^{\sigmaup}$-equivariant  CR-fibrations of the closed orbits 
and their characterization 
in terms of  
associated \emph{cross-marked  Satake diagrams}.  
Together with  Lemma~\ref{l1.3}, this will give us effective criteria of finite nondegeneracy.
\par

\begin{rmk}
Let $(\gs,\qt)$ be a  CR  algebra, $\at^{\sigmaup}$ an ideal of $\gs$ and $\at$ its complexification. 
Set 
\begin{equation}\begin{cases}
 \ft=(\qt+\at)\cap(\sigmaup(\qt)+\at),\\
 \ft^{\sigmaup}=\ft\cap\,\gs,\\
 \qt'=\qt\cap(\sigmaup(\qt)+\at).
 \end{cases}
\end{equation}
Since 
\begin{equation*}
 \qt+(\qt+\at)\cap(\sigmaup(\qt)+\at)=\qt+\at,
\end{equation*}
the following fibration is CR: 
\begin{equation} \label{e4.6}
\begin{CD}
 0 @>>> (\ft^{\sigmaup},\qt')@>>> (\gs,\qt) @>>> (\gs,\qt+\at)@>>> 0. 
\end{CD}
\end{equation}
\end{rmk}
\vspace{1cm}
\par

If $\psiup\,{\subseteq}\,\phiup\,{\subseteq}\,\Bz$, then $\qt_{\phiup}\,{\subseteq}\,\qt_{\psiup}$ and we obtain
a $\gt^{\sigmaup}$-equivariant fibration 
\begin{equation} \label{e5.3}
\begin{CD}
 0 @>>> (\ft_{\psiup}^{\sigmaup},\qt_{\phiup,\psiup})@>>> (\gs,\qt_{\phiup}) @>>> (\gs,\qt_{\psiup}) @>>> 0
\end{CD}
\end{equation}
where the \textit{typical fiber} $( \ft^{\sigmaup}_{\;\psiup}, \qt_{\phiup,\psiup})$ is described by 
\begin{equation} 
\begin{cases}
 \ft_{\psiup}=\qt_{\psiup}\cap\sigmaup(\qt_{\psiup}),\\
 \ft_{\psiup}^{\sigmaup}=\qt_{\psiup}\cap\gs,\\
 \qt_{\phiup,\psiup}=\qt_{\phiup}\cap\sigmaup(\qt_{\psiup}).
\end{cases}
 \end{equation}
 \par
 We recall that \eqref{e5.3} is a  CR  fibration  iff 
\begin{equation}
 \qt_{\psiup}=\qt_{\psiup}\cap\sigmaup(\qt_{\psiup})+\qt_{\phiup}.
\end{equation}
In terms of roots, this equality can be rewritten by 
\begin{align*}
 &\Qc_{\;\psiup}=\left(\Qc_{\;\psiup}\cap\sigmaup(\Qc_{\;\phiup})\right)\cup\Qc_{\;\phiup}\\
 &\quad
 \Leftrightarrow \Rad\setminus\Qc^{-n}_{\;\psiup}=
 \left[(\Rad\setminus\Qc^{-n}_{\;\psiup})\cap\sigmaup(\Rad\setminus\Qc^{-n}_{\;\psiup})\right]
 \cup(\Rad\setminus\Qc^{-n}_{\,\,\phiup})\\
 &\quad
 \Leftrightarrow  \Rad\setminus\Qc^{-n}_{\;\psiup}=
 \left[\Rad\setminus\left(\Qc^{-n}_{\;\psiup}\cup\sigmaup(\Qc^{-n}_{\;\psiup})\right)\right]
 \cup(\Rad\setminus\Qc^{-n}_{\,\,\phiup})\\
 &\quad
 \Leftrightarrow \Qc^{-n}_{\;\psiup}=(\Qc^{-n}_{\;\psiup}\cup\sigmaup(\Qc^{-n}_{\;\psiup}))\cap\Qc^{-n}_{\,\,\phiup}\\
 &\quad
 \Leftrightarrow \Qc^{n}_{\;\psiup}=(\Qc^{n}_{\;\psiup}\cup\sigmaup(\Qc^{n}_{\;\psiup}))\cap\Qc^{n}_{\,\,\phiup}\\
 &\quad
 \Leftrightarrow \Qc^{n}_{\,\,\phiup}\cap\sigmaup(\Qc^{n}_{\;\psiup})\subseteq\Qc^{n}_{\;\psiup}.
\end{align*}
We summarize the above discussion by the following proposition.
\begin{prop}
 Let $C$ be an adapted Weyl chamber for the parabolic  CR  algebra $(\gs,\qt_{\phiup})$, described by
 the subset $\phiup$ of the basis $\Bz\,{=}\,\Bz(C)$ of simple positive roots of $\Rad^{+}(C)$. 
 If there is a $\gs$-equivariant   CR  algebra homomorphism of $(\gs,\qt_{\phiup})$
 onto a  CR  algebra $(\gs,\qt')$, then this is also parabolic and $\qt'\,{=}\,\qt_{\psiup}$ for a subset
 $\psiup$ of $\Bz$ with $\psiup\,{\subseteq}\,\phiup\,{\subseteq}\,\Bz$. The corresponding fibration \eqref{e5.3}
 is  CR  if and only if 
\begin{equation}\label{e5.4} \vspace{-18pt}
\Qc^{n}_{\;\phiup}\cap\sigmaup(\Qc^{n}_{\;\psiup})\subseteq\Qc^{n}_{\;\psiup}.
\end{equation}
\qed
\end{prop}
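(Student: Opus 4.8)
The plan is to prove the two assertions of the proposition separately.

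For the first one --- that $\qt'$ must be a standard parabolic subalgebra $\qt_{\psiup}$ with $\psiup\subseteq\phiup$ --- I would argue by elementary structure theory. A $\gs$-equivariant morphism of CR algebras over the same real form $\gs$ is induced, after conjugating $\qt'$ if necessary, by $\id_{\gs}$, so it amounts to an inclusion $\qt_{\phiup}\subseteq\qt'$ of complex Lie subalgebras of $\gt$; conversely every such inclusion produces a $\gs$-equivariant CR submersion onto the homogeneous space attached to $\qt'$, which accounts for the word ``onto''. Since $\phiup\subseteq\Bz$ we have $\bt\,{\doteq}\,\hg\oplus\bigoplus_{\alpha\in\Rad^{+}}\gt_{\alpha}\subseteq\qt_{\phiup}$, and $\bt$ is a Borel subalgebra of $\gt$; hence $\qt'\supseteq\bt$ is parabolic, equal to $\qt_{\psiup}$ for a uniquely determined $\psiup\subseteq\Bz$, by the classification of Lie subalgebras containing a fixed Borel subalgebra (\cite[Ch.VIII,\S{3.4}]{Bo2005}). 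Finally $\psiup$ is recovered from $\qt_{\psiup}$ as $\{\alpha\in\Bz\mid\gt_{-\alpha}\not\subseteq\qt_{\psiup}\}$, so the inclusion $\qt_{\phiup}\subseteq\qt_{\psiup}$ forces $\psiup\subseteq\phiup$.

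For the second assertion I would start from the characterization recalled just above, namely that \eqref{e5.3} is a CR fibration if and only if $\qt_{\psiup}=(\qt_{\psiup}\cap\sigmaup(\qt_{\psiup}))+\qt_{\phiup}$, and then convert this subalgebra identity into an identity of root subsets. What makes this possible is that the Cartan subalgebra $\hg$ is $\sigmaup$-stable --- being the complexification of $\hs\subseteq\gs$ --- and that $\sigmaup$ permutes the one-dimensional root spaces along the induced symmetry of $\Rad$; hence $\sigmaup(\qt_{\psiup})=\hg\oplus\bigoplus_{\alpha\in\sigmaup(\Qc_{\psiup})}\gt_{\alpha}$, and likewise each intersection and sum entering the identity is again the direct sum of $\hg$ with a union of root spaces. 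Comparing root sets, the identity becomes $\Qc_{\psiup}=(\Qc_{\psiup}\cap\sigmaup(\Qc_{\psiup}))\cup\Qc_{\phiup}$.

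It then remains to run the purely set-theoretic chain displayed before the statement: writing $\Qc_{\phiup}$ and $\Qc_{\psiup}$ as $\Rad$ with the respective strictly negative parts $\Qc^{-n}_{\phiup}$ and $\Qc^{-n}_{\psiup}$ removed, applying De Morgan and complementation in $\Rad$, using that $\sigmaup$ is a bijection of $\Rad$ commuting with $\alpha\mapsto-\alpha$, and then invoking $\Qc^{n}_{\psiup}\subseteq\Qc^{n}_{\phiup}$ --- which holds since $\psiup\subseteq\phiup$ and the coefficients $n_{\beta,\alpha}$ of a positive root are nonnegative --- one lands exactly on \eqref{e5.4}. The step most likely to need genuine care is the passage between the subalgebra identity and the root-set identity --- which hinges on $\hg$ being common to all the algebras involved and on $\sigmaup$ acting as a permutation of root spaces --- together with the check that every link of the combinatorial chain is a true equivalence and not merely a one-sided implication. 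I do not expect a deeper obstacle: the adapted-chamber hypothesis belongs to the standing framework of Satake diagrams and guarantees that the parabolic attached to $\phiup$ is $\qt_{\phiup}$, while the fibration computation itself uses only $\psiup\subseteq\phiup$ and the fact that $\sigmaup$ is a symmetry of the root system.
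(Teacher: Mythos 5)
Your proposal is correct and follows essentially the same route as the paper: the paper proves the proposition precisely by the displayed chain of root-set equivalences preceding the statement, starting from the characterization $\qt_{\psiup}=\qt_{\psiup}\cap\sigmaup(\qt_{\psiup})+\qt_{\phiup}$ and using $\Qc^{n}_{\;\psiup}\subseteq\Qc^{n}_{\;\phiup}$ in the last step, exactly as you do. Your Borel-subalgebra argument for the first assertion (that $\qt'$ must be a standard parabolic $\qt_{\psiup}$ with $\psiup\subseteq\phiup$) is the standard justification the paper leaves implicit, and it is sound.
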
 
In general, $\gt_{\psiup}^{\sigmaup}$ and $\gt_{\psiup}$ may not be semisimple. However,
 both $\ft_{\psiup}$ and $\qt_{\phiup,\psiup}$ are \textit{regular} subalgebras of $(\gt,\hg)$. 
 With 
\begin{equation}
 \Fc_{\psiup}^{r}{=}\Qc^{r}_{\;\psiup}{\cap}\,\sigmaup(\Qc^{r}_{\;\psiup}),\;
 \Fc_{\psiup}^{n}{=}(\Qc_{\;\psiup}{\cap}\,\sigmaup(\Qc^{n}_{\;\psiup})){\cup}(\Qc^{n}_{\;\psiup}{\cap}
 \,\sigmaup(\Qc_{\;\psiup})),\;  \Fc_{\psiup}{=} \Fc_{\psiup}^{r}{\cup}\Fc^{n}_{\;\psiup},
\end{equation}
we get a Levi-Chevalley decomposition of $\ft_{\psiup}$:
\begin{equation}
 \ft_{\psiup}\,{=}\,\ft_{\psiup}^{r}\oplus\ft^{n}_{\;\psiup},\;\;\text{with}\;\; 
\begin{cases}
 \ft_{\psiup}^{r}=\hg\oplus{\sum}_{\alpha\in \Fc_{\psiup}^{r}}\gt^{\alpha} &\text{(reductive Levi factor)},\\
 \ft^{n}_{\;\psiup}={\sum}_{\alpha\in\Fc^{n}_{\;\psiup}}\gt^{\alpha} &\text{(nilradical)}.
\end{cases}
\end{equation}
\par
Analogously, with 
\begin{equation}
 \Qc_{\phiup,\psiup}{=}\Qc_{\;\phiup}{\cap}\sigmaup(\Qc_{\;\psiup}),\; 
 \Qc^{r}_{\phiup,\psiup}{=}\,\Qc^{r}_{\phiup}{\cap}\sigmaup(\Qc^{r}_{\;\psiup}),
 \;
 \Qc^{n}_{\;\phiup,\psiup}{=}\,(\Qc^{n}_{\;\phiup}{\cap}\sigmaup(\Qc_{\;\psiup}))
 {\cup}(\Qc_{\;\phiup}{\cap}\sigmaup(\Qc^{n}_{\;\psiup})),
\end{equation}
we get a Levi-Chevalley decomposition of $\qt_{\phiup,\psiup}$:
\begin{equation}
 \qt_{\phiup,\psiup}\,{=}\,\qt_{\phiup,\psiup}^{r}\oplus\qt^{n}_{\phiup,\psiup},\;\text{with}\; 
\begin{cases}
 \qt_{\phiup,\psiup}^{r}=\hg\oplus{\sum}_{\alpha\in\Qc_{\phiup,\psiup}^{r}}\gt^{\alpha} &\text{(reductive Levi factor)},\\
 \qt^{n}_{\phiup,\psiup}={\sum}_{\alpha\in\Qc^{n}_{\;\phiup\psiup}}\gt^{\alpha} &\text{(nilradical)}.
\end{cases}
\end{equation} 
\begin{lem} Assume that $C$ has property \eqref{Adapted Weyl} for $\sigmaup$. 
Then 
\begin{equation}
 \Fc^{n}_{\;\psiup}\subseteq\Rad^{+}(C){\setminus}\Rad_{\,\;\bullet}^{\sigmaup}\subseteq\Qc_{\;\phiup}
 \;\Longrightarrow\; \ft_{\psiup}^{n}\subseteq\qt_{\phiup}.
\end{equation}
\end{lem}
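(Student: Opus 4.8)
\emph{Plan.} The statement is a translation of an inclusion of Lie subalgebras into one of root sets, so the strategy is to pass to roots and then read the conclusion off the hypothesis. Recall that $\ft_{\psiup}$ (and a fortiori its nilradical $\ft^{n}_{\;\psiup}$) is a regular subalgebra of $(\gt,\hg)$, with $\ft^{n}_{\;\psiup}=\bigoplus_{\alpha\in\Fc^{n}_{\;\psiup}}\gt_{\alpha}$, while $\qt_{\phiup}=\hg\oplus\bigoplus_{\beta\in\Qc_{\;\phiup}}\gt_{\beta}$ by \eqref{e3.5}. Since root spaces are one–dimensional and linearly independent over $\hg$, the inclusion $\ft^{n}_{\;\psiup}\subseteq\qt_{\phiup}$ is equivalent to the purely combinatorial one $\Fc^{n}_{\;\psiup}\subseteq\Qc_{\;\phiup}$; so it suffices to establish the latter.

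\emph{The root inclusion.} This is immediate from the assumed chain $\Fc^{n}_{\;\psiup}\subseteq\Rad^{+}(C)\setminus\Rad_{\,\bullet}^{\sigmaup}\subseteq\Qc_{\;\phiup}$ by transitivity of $\subseteq$, and Step~1 then concludes. It is worth noting \emph{why} each link of the chain holds, since this explains the role of \eqref{Adapted Weyl}. The second inclusion $\Rad^{+}(C)\setminus\Rad_{\,\bullet}^{\sigmaup}\subseteq\Qc_{\;\phiup}$ holds for every $\phiup\subseteq\Bz(C)$: a positive root $\beta=\sum_{\alpha\in\Bz}n_{\beta,\alpha}\alpha$ has all $n_{\beta,\alpha}\geq 0$, hence $\xiup_{\phiup}(\beta)=\sum_{\alpha\in\phiup}n_{\beta,\alpha}\geq 0$, i.e. $\beta\in\Qc_{\;\phiup}$; in fact $\Rad^{+}(C)\subseteq\Qc_{\;\phiup}$. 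The first inclusion $\Fc^{n}_{\;\psiup}\subseteq\Rad^{+}(C)\setminus\Rad_{\,\bullet}^{\sigmaup}$ is where \eqref{Adapted Weyl} is used: write $\Fc^{n}_{\;\psiup}=\bigl(\Qc_{\;\psiup}\cap\sigmaup(\Qc^{n}_{\;\psiup})\bigr)\cup\bigl(\Qc^{n}_{\;\psiup}\cap\sigmaup(\Qc_{\;\psiup})\bigr)$ and check for each piece that its elements are positive and non-imaginary. Indeed $\Qc^{n}_{\;\psiup}\subseteq\Rad^{+}$ because $\xiup_{\psiup}>0$ there; an imaginary $\delta\in\Qc^{n}_{\;\psiup}$ would have $\sigmaup(\delta)=-\delta$ with $\xiup_{\psiup}(-\delta)<0$, so such a $\delta$ lies neither in $\sigmaup(\Qc_{\;\psiup})$ nor has $\sigmaup(\delta)\in\Qc_{\;\psiup}$; and for an element of the form $\gamma=\sigmaup(\delta)$ with $\delta\in\Qc^{n}_{\;\psiup}\setminus\Rad_{\,\bullet}^{\sigmaup}$, property \eqref{Adapted Weyl} forces $\gamma\in\Rad^{+}$, while $\sigmaup$ preserves imaginarity so $\gamma\notin\Rad_{\,\bullet}^{\sigmaup}$.

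\emph{Main obstacle.} There is essentially none: the lemma is a bookkeeping step translating the Levi--Chevalley decomposition of the typical fiber into the language of the root system. The only care required is in handling the two-piece description of $\Fc^{n}_{\;\psiup}$ and in the systematic use of the two elementary facts — that $\xiup_{\psiup}$ is nonnegative on positive roots, and that \eqref{Adapted Weyl} sends positive non-imaginary roots to positive roots under $\sigmaup$. With these, the implication follows; in fact one obtains the stronger assertion that, for an adapted chamber $C$, the inclusion $\ft^{n}_{\;\psiup}\subseteq\qt_{\phiup}$ holds unconditionally.
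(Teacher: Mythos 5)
Your proof is correct and follows essentially the same route as the paper's: both reduce to showing $\Fc^{n}_{\;\psiup}\subseteq\Rad^{+}(C)\setminus\Rad_{\,\bullet}^{\sigmaup}$ by checking that $\Qc^{n}_{\;\psiup}\cap\sigmaup(\Qc_{\;\psiup})$ contains no imaginary roots and then applying $\sigmaup$ together with \eqref{Adapted Weyl} to handle the other piece $\Qc_{\;\psiup}\cap\sigmaup(\Qc^{n}_{\;\psiup})$. Your closing observation that the chain of inclusions (and hence the conclusion) holds unconditionally for an adapted chamber is consistent with the paper, whose proof likewise establishes the hypothesis rather than assuming it.
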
 
\begin{proof}
 Let us check that $\Qc^{n}_{\;\psiup}\,{\cap}\,\sigmaup(\Qc_{\;\psiup})\,{\cap}\,\Rad_{\,\;\bullet}^{\sigmaup}\,{=}\,\emptyset$.
 Assume that $\alpha\,{\in}\,\Qc^{n}_{\;\psiup}\,{\cap}\,\Rad_{\,\;\bullet}^{\sigmaup}$.
 Then ${-}\alpha\,{\in}\,\sigmaup(\Qc^{n}_{\;\psiup})$ shows that $\alpha\,{\in}\,\sigmaup(\Qc_{\;\psiup}^{-n})$,  i.e.  that
 $\alpha\,{\notin}\,\sigmaup(\Qc_{\;\psiup})$.  Since $\Qc_{\;\psiup}^{n}$ is contained in $\Rad^{+}(C)$
 and we assumed that $C$ satisfies
 \eqref{Adapted Weyl}, we obtain that also
 $\sigmaup(\Qc^{n}_{\;\psiup}\,{\cap}\,\sigmaup(\Qc_{\;\psiup}))
 \,{=}\,\Qc_{\;\psiup}\,{\cap}\,\sigmaup(\Qc^{n}_{\;\psiup})\,{\subseteq}\,
 \Rad^{+}(C)$. The proof is complete.
\end{proof}

\begin{lem}
The set $\Fc^{r}_{\psiup}$ is a subsystem of roots of $\Rad$,
\begin{equation}
  \Bz_{\psiup}=\{\alpha\in\Bz\mid \alpha,\sigmaup(\alpha)\notin\Qc_{\;\psiup}^{n}\}
\end{equation}
a basis of simple positive roots of $\Fc^{r}_{\psiup}$ and $\Qc_{\phiup,\psiup}{\cap}\,\Fc^{r}_{\psiup}$
a parabolic subset of~$\Fc^{r}_{\phiup}$.\qed
\end{lem}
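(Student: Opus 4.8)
The statement packs three independent claims, and the plan is to dispatch them in turn, reducing each to the combinatorics of the functionals $\xiup_{\psiup}$, $\xiup_{\phiup}$ and the description \eqref{sigmaup} of $\sigmaup$ on simple roots; as in the preceding lemma we assume $C$ satisfies \eqref{Adapted Weyl}. For the first claim I would observe that $\Fc^{r}_{\psiup}=\Qc^{r}_{\psiup}\cap\sigmaup(\Qc^{r}_{\psiup})$ is a root subsystem because both factors are. Indeed $\Qc^{r}_{\psiup}=\{\beta\in\Rad\mid\xiup_{\psiup}(\beta)=0\}$ is the root system of the reductive Levi factor $\qt^{r}_{\psiup}$, hence a closed symmetric subset of $\Rad$; more precisely, since every root is a sign-definite integral combination of $\Bz$, it is exactly the standard Levi subsystem of $\Rad$ generated by $\Bz{\setminus}\psiup$, with positive part $\Qc^{r}_{\psiup}\cap\Rad^{+}(C)$. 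Because $\sigmaup$ permutes $\Rad$ by an $\R$-linear bijection of $\sfE$, the set $\sigmaup(\Qc^{r}_{\psiup})$ is again a root subsystem, and hence so is $\Fc^{r}_{\psiup}$ (equivalently $\Fc^{r}_{\psiup}=\Rad\cap V$ with $V=\ker\xiup_{\psiup}\cap\sigmaup(\ker\xiup_{\psiup})$).

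For the second claim the idea is to exhibit $\Fc^{r}_{\psiup}$ not as a subsystem of $\Rad$ directly, but as a standard Levi subsystem of $\Qc^{r}_{\psiup}$. Setting $f\doteq\xiup_{\psiup}\circ\sigmaup$, involutivity of $\sigmaup$ gives $\beta\in\sigmaup(\Qc^{r}_{\psiup})$ iff $f(\beta)=0$, so $\Fc^{r}_{\psiup}=\{\beta\in\Qc^{r}_{\psiup}\mid f(\beta)=0\}$. The crucial step is that $f\geq 0$ on $\Qc^{r}_{\psiup}\cap\Rad^{+}(C)$: by linearity it suffices to check $f(\alpha)=\xiup_{\psiup}(\sigmaup(\alpha))\geq 0$ on the simple roots $\alpha\in\Bz{\setminus}\psiup$, which follows from \eqref{sigmaup}, since $\sigmaup(\alpha)$ is $-\alpha$ with $\xiup_{\psiup}(\alpha)=0$ when $\alpha\in\Bz^{\sigmaup}_{\bullet}$, and a non-negative combination of simple roots otherwise, on which $\xiup_{\psiup}\geq 0$. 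Granting this, for $\alpha\in\Bz{\setminus}\psiup$ one has $f(\alpha)=0$ iff $\xiup_{\psiup}(\sigmaup(\alpha))\leq 0$ iff $\sigmaup(\alpha)\notin\Qc^{n}_{\psiup}$, i.e.\ (a simple root lies in $\Qc^{n}_{\psiup}$ exactly when it lies in $\psiup$, so that the first defining condition of $\Bz_{\psiup}$ is automatic here) iff $\alpha\in\Bz_{\psiup}$. Hence $f$ restricts on the basis $\Bz{\setminus}\psiup$ of $\Qc^{r}_{\psiup}$ to a functional which is non-negative on positive roots and vanishes precisely on $\Bz_{\psiup}$; therefore $\Fc^{r}_{\psiup}$ consists exactly of the roots of $\Qc^{r}_{\psiup}$ supported on $\Bz_{\psiup}$, i.e.\ it is the standard Levi subsystem of $\Qc^{r}_{\psiup}$ generated by $\Bz_{\psiup}$, which has $\Bz_{\psiup}$ as basis of simple positive roots for $\Rad^{+}(C)$ and $\Fc^{r}_{\psiup}\cap\Rad^{+}(C)$ as positive part.

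For the third claim I would first simplify the set in question. Since $\Qc^{r}_{\psiup}\subseteq\Qc_{\psiup}$, hence $\sigmaup(\Qc^{r}_{\psiup})\subseteq\sigmaup(\Qc_{\psiup})$, the factor $\sigmaup(\Qc_{\psiup})$ in $\Qc_{\phiup,\psiup}=\Qc_{\phiup}\cap\sigmaup(\Qc_{\psiup})$ is absorbed, so $\Qc_{\phiup,\psiup}\cap\Fc^{r}_{\psiup}=\Qc_{\phiup}\cap\Fc^{r}_{\psiup}=\{\beta\in\Fc^{r}_{\psiup}\mid\xiup_{\phiup}(\beta)\geq 0\}$. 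The restriction of $\xiup_{\phiup}$ to the $\R$-span of $\Fc^{r}_{\psiup}$ is a linear functional, and its non-negativity locus inside the root system $\Fc^{r}_{\psiup}$ is closed (a sum of roots with $\xiup_{\phiup}\geq 0$ again has $\xiup_{\phiup}\geq 0$) and meets every pair $\{\beta,-\beta\}$; that is exactly the definition of a parabolic subset of $\Fc^{r}_{\psiup}$ (here one should read $\Fc^{r}_{\psiup}$ in place of the $\Fc^{r}_{\phiup}$ of the statement).

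The one step that is not pure bookkeeping is the inequality $f\geq 0$ on $\Qc^{r}_{\psiup}\cap\Rad^{+}(C)$ used in the second part: this is precisely where the hypothesis that $C$ is an $S$-chamber enters, through \eqref{Adapted Weyl} and \eqref{sigmaup} (equivalently: a positive complex simple root has a positive $\sigmaup$-conjugate). Everything else is formal manipulation with the partitions \eqref{equ2.5}, the identification of $\Qc^{r}_{\psiup}$ with a standard Levi subsystem, and the linearity of $\xiup_{\psiup}$ and $\xiup_{\phiup}$.
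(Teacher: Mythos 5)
The paper states this lemma with no proof (it ends in a \qed), so there is nothing to compare against; your argument is correct and supplies exactly the missing details. The three reductions — $\Fc^{r}_{\psiup}=\Rad\cap V$ for a subspace $V$, the functional $f=\xiup_{\psiup}\circ\sigmaup$ being nonnegative on $\Qc^{r}_{\psiup}\cap\Rad^{+}(C)$ by \eqref{Adapted Weyl} and \eqref{sigmaup} so that its zero locus is the standard subsystem on $\Bz_{\psiup}$, and the identification $\Qc_{\phiup,\psiup}\cap\Fc^{r}_{\psiup}=\{\beta\in\Fc^{r}_{\psiup}\mid\xiup_{\phiup}(\beta)\geq 0\}$ — are all sound, and you are right that the final clause should read $\Fc^{r}_{\psiup}$ rather than $\Fc^{r}_{\phiup}$ (a typo in the statement) and that the $S$-chamber hypothesis, implicit from the surrounding context, is genuinely needed in the second step.
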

\begin{prop}\label{p5.5}
 Let $\psiup\,{\subseteq}\,\phiup\,{\subseteq}\,\Bz$ and assume that 
 $(\gt^{\sigmaup},\qt_{\phiup})\,{\to}\,(\gt^{\sigmaup},\qt_{\psiup})$
 is a CR fibration. A necessary and sufficient condition for the typical fiber being totally complex is that 
\begin{equation}\label{e5.13}
 \qt_{\phiup}+\sigmaup(\qt_{\phiup})= \qt_{\psiup}+\sigmaup(\qt_{\psiup}).
\end{equation}
\end{prop}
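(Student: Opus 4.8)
The plan is to rewrite both ``the typical fiber $(\ft^\sigmaup_\psiup,\qt_{\phiup,\psiup})$ is totally complex'' and the equality \eqref{e5.13} as purely set-theoretic identities among subsets of $\Rad$, and then to check that, under the standing hypothesis that \eqref{e5.3} is a CR fibration, these two identities are equivalent.

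First I would unwind total complexity for the fiber. Since $\ft_\psiup=\qt_\psiup\cap\sigmaup(\qt_\psiup)$ is $\sigmaup$-stable, it is the complexification of its real form $\ft^\sigmaup_\psiup$ (one checks $\ft_\psiup\cap\gs=\qt_\psiup\cap\gs$, a $\sigmaup$-fixed element of $\qt_\psiup$ automatically lying in $\sigmaup(\qt_\psiup)$), and $\sigmaup$ restricted to $\ft_\psiup$ is the corresponding conjugation; so total complexity of the fiber means $\qt_{\phiup,\psiup}+\sigmaup(\qt_{\phiup,\psiup})=\ft_\psiup$, i.e. $(\qt_\phiup\cap\sigmaup(\qt_\psiup))+(\sigmaup(\qt_\phiup)\cap\qt_\psiup)=\qt_\psiup\cap\sigmaup(\qt_\psiup)$. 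All the subalgebras here contain $\hg$ and are spanned by $\hg$ together with root spaces, hence each is determined by its set of roots, with sums, intersections and $\sigmaup$ corresponding to unions, intersections and the induced involution of $\Rad$. Setting $A=\Qc_\phiup$, $B=\Qc_\psiup$, $A^*=\sigmaup(A)$, $B^*=\sigmaup(B)$ — and noting $A\subseteq B$, $A^*\subseteq B^*$ because $\psiup\subseteq\phiup$ — total complexity of the fiber becomes $(A\cap B^*)\cup(A^*\cap B)=B\cap B^*$, the identity \eqref{e5.13} becomes $A\cup A^*=B\cup B^*$, and the CR-fibration hypothesis, already in the form $\qt_\psiup=\qt_\psiup\cap\sigmaup(\qt_\psiup)+\qt_\phiup$, becomes $B=(B\cap B^*)\cup A$; applying $\sigmaup$ I also get $B^*=(B\cap B^*)\cup A^*$.

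It then remains to show these two set identities are equivalent. The inclusions $(A\cap B^*)\cup(A^*\cap B)\subseteq B\cap B^*$ and $A\cup A^*\subseteq B\cup B^*$ are automatic from $A\subseteq B$ and $A^*\subseteq B^*$, so only the reverse inclusions are at stake. For ``\eqref{e5.13} $\Rightarrow$ fiber totally complex'' I take $\beta\in B\cap B^*$; by the root form of \eqref{e5.13} we have $\beta\in A\cup A^*$, and combining $\beta\in A$ with $\beta\in B^*$ (resp. $\beta\in A^*$ with $\beta\in B$) puts $\beta$ into $(A\cap B^*)\cup(A^*\cap B)$ — this uses nothing beyond $A\subseteq B$, $A^*\subseteq B^*$. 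For the converse I take $\beta\in B\cup B^*$ and use $B=(B\cap B^*)\cup A$ together with its $\sigmaup$-image $B^*=(B\cap B^*)\cup A^*$ to reduce to the case $\beta\in B\cap B^*$, which by the total-complexity identity lies in $(A\cap B^*)\cup(A^*\cap B)\subseteq A\cup A^*$; hence $B\cup B^*\subseteq A\cup A^*$, which is \eqref{e5.13}.

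I do not expect a substantial obstacle: the argument is formal once one observes that $\hg$-invariant subalgebras are faithfully recorded by their root sets. The only place the CR-fibration hypothesis is actually used is the implication ``fiber totally complex $\Rightarrow$ \eqref{e5.13}'', and there the point to be careful about is that one needs both the hypothesis $B=(B\cap B^*)\cup A$ and its $\sigmaup$-conjugate $B^*=(B\cap B^*)\cup A^*$; the reverse implication holds with no hypothesis at all.
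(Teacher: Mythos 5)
Your proof is correct and follows essentially the same route as the paper's: the paper substitutes the total-complexity identity $\qt_{\psiup}\cap\sigmaup(\qt_{\phiup})+\qt_{\phiup}\cap\sigmaup(\qt_{\psiup})=\qt_{\psiup}\cap\sigmaup(\qt_{\psiup})$ into the CR-fibration identity $\qt_{\psiup}=\qt_{\phiup}+\qt_{\psiup}\cap\sigmaup(\qt_{\psiup})$ to obtain $\qt_{\psiup}\subseteq\qt_{\phiup}+\sigmaup(\qt_{\phiup})$, which is precisely your second implication written with subalgebra sums instead of root subsets. The only difference is that you also spell out the (hypothesis-free) converse implication, which the paper leaves implicit, explicitly proving only that total complexity of the fiber implies \eqref{e5.13}.
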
 
\begin{proof}
Indeed we have 
 \begin{align*}
 \qt_{\phiup,\psiup}\,{+}\,\sigmaup(\qt_{\phiup,\psiup})\,{=}\,\ft_{\psiup}\;\Leftrightarrow\;
 \qt_{\psiup}\cap\sigmaup(\qt_{\phiup})+\qt_{\phiup}\cap\sigmaup(\qt_{\psiup})=\qt_{\psiup}\cap\sigmaup(\qt_{\psiup}).
 \end{align*} 
 By the assumption that the fibration is CR we get 
\begin{equation*}
 \qt_{\psiup}=\qt_{\phiup}+\qt_{\psiup}\cap\sigmaup(\qt_{\psiup})=
\qt_{\phiup}+ \qt_{\psiup}\cap\sigmaup(\qt_{\phiup})+\qt_{\phiup}\cap\sigmaup(\qt_{\psiup})
\subset \qt_{\phiup}+\sigmaup(\qt_{\phiup})
\end{equation*}
and this implies that the right hand side is contained in the left hand side of
\eqref{e5.13}. Since the opposite inclusion is obvious, the proof is complete.
 \end{proof}
 \begin{rmk} Condition \eqref{e5.13} is equivalent to any of 
\begin{gather}
\tag{\ref{e5.13}a}
 \qt_{\phiup}\subseteq\qt_{\psiup}\subseteq\qt_{\phiup}+\sigmaup(\qt_{\phiup}),\\
 \tag{\ref{e5.13}b}
 \qt_{\phiup}+\qt_{\psiup}\cap\sigmaup(\qt_{\phiup})=\qt_{\psiup},\\
 \tag{\ref{e5.13}c}
 \qt_{\phiup}\cap \sigmaup(\qt_{\psiup})\subseteq\qt_{\psiup}.
\end{gather}
 In terms of roots, we can translate \eqref{e5.13} into 
\begin{equation} (\ft^{\sigmaup}_{\;\psiup},\qt_{\phiup,\psiup})\,\;\text{is totally complex}\; \Leftrightarrow\;\;
 \Qc_{\;\psiup}\cup\sigmaup(\Qc_{\;\psiup})= \Qc_{\;\phiup}\cup\sigmaup(\Qc_{\;\phiup}).
\end{equation}
\end{rmk}
\par\smallskip
We have the following criterion on its cross-marked Satake diagram 
to establish whether a minimal orbit is \textit{fundamental} 
(see Def.~\ref{fundamental}):
\begin{prop}[{\cite[Th. 9.1]{AlMeNa2006}}]\label{p5.7}
The effective parabolic  CR  algebra  $(\gt^{\sigmaup},\qt_{\phiup})$ 
of a closed orbit is \emph{fundamental}  
if and only if its corresponding cross-marked Satake diagram $(\mathcal{S},\phiup)$ has the property
\begin{equation}\vspace{-18pt}
\alpha\in\phiup\setminus\Rad_{\;\bullet}^{\sigmaup}\Rightarrow\epi(\alpha)\notin\phiup .
\end{equation}\qed
\end{prop}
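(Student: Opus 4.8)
The plan is to recast fundamentality of $(\gt^{\sigmaup},\qt_{\phiup})$ --- that $\qt_{\phiup}+\sigmaup(\qt_{\phiup})$ generate $\gt$ --- as the statement that the standard parabolic subalgebra generated by $\qt_{\phiup}+\sigmaup(\qt_{\phiup})$ is all of $\gt$, and then to identify that parabolic directly from the cross-marked Satake diagram. Since the orbit is closed, by Theorem~\ref{t3.4} we may work with a Cartan subalgebra $\hs$ of maximal vector part and an $S$-chamber $C$, so that $\sigmaup$ acts on the basis $\Bz$ through \eqref{sigmaup} and $\qt=\qt_{\phiup}$ for some $\phiup\subseteq\Bz$. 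First I would note that $\qt_{\phiup}+\sigmaup(\qt_{\phiup})=\hg\oplus\bigoplus_{\beta\in\Qc_{\;\phiup}\cup\sigmaup(\Qc_{\;\phiup})}\gt_{\beta}$ contains the standard Borel $\hg\oplus\bigoplus_{\beta\in\Rad^{+}}\gt_{\beta}$, so the Lie subalgebra $\qt'$ it generates is again a standard parabolic $\qt_{\psiup}$ with $\psiup\subseteq\phiup$; moreover $\qt'$ is $\sigmaup$-stable, because $\sigmaup$ is a conjugate-linear automorphism of $\gt$ fixing the generating set as a whole. As any $\sigmaup$-stable standard parabolic containing $\qt_{\phiup}$ also contains $\sigmaup(\qt_{\phiup})$, hence $\qt'$, the subalgebra $\qt'=\qt_{\psiup}$ is the \emph{smallest} $\sigmaup$-stable standard parabolic containing $\qt_{\phiup}$; thus $(\gt^{\sigmaup},\qt_{\phiup})$ is fundamental if and only if $\psiup=\emptyset$.

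The second step is a clean description of $\sigmaup$-stability: I would show that $\qt_{\rho}$ (for $\rho\subseteq\Bz$) is $\sigmaup$-stable precisely when $\rho$ is $\epi$-invariant and $\rho\cap\Bz_{\;\bullet}^{\sigmaup}=\emptyset$. For necessity, $\sigmaup$-stability of $\qt_{\rho}$ is equivalent to $\sigmaup(\Qc_{\;\rho}^{n})=\Qc_{\;\rho}^{n}$; a black $\gamma\in\rho$ would force $-\gamma=\sigmaup(\gamma)\in\Qc_{\;\rho}^{n}$, impossible since $\xiup_{\rho}(-\gamma)<0$, so $\rho$ is black-free, and then --- using \eqref{sigmaup}, whose black-root tail no longer contributes to $\xiup_{\rho}$ --- the requirement $\sigmaup(\alpha)\in\Qc_{\;\rho}^{n}$ for $\alpha\in\rho$ becomes $\epi(\alpha)\in\rho$. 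For sufficiency, if $\rho$ is $\epi$-invariant and black-free, \eqref{sigmaup} gives $\xiup_{\rho}(\sigmaup(\alpha))=\xiup_{\rho}(\alpha)$ for every $\alpha\in\Bz$; since $\Bz$ is an $\R$-basis of $\sfE$ and both $\xiup_{\rho}$ and $\xiup_{\rho}\circ\sigmaup$ are $\R$-linear, $\xiup_{\rho}\circ\sigmaup=\xiup_{\rho}$, so $\sigmaup$ preserves $\Qc_{\;\rho}=\{\beta\in\Rad\mid\xiup_{\rho}(\beta)\ge0\}$.

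Finally I would combine the two steps: the smallest $\sigmaup$-stable standard parabolic containing $\qt_{\phiup}$ corresponds to the \emph{largest} subset $\psiup$ of $\phiup$ that is $\epi$-invariant and black-free, and that largest subset is visibly $\psiup=\{\alpha\in\phiup\setminus\Bz_{\;\bullet}^{\sigmaup}\mid\epi(\alpha)\in\phiup\}$ (it is $\epi$-invariant, lies in $\phiup$, and contains every $\epi$-invariant black-free subset of $\phiup$). Hence $(\gt^{\sigmaup},\qt_{\phiup})$ is fundamental if and only if this $\psiup$ is empty, i.e.\ if and only if no $\alpha\in\phiup\setminus\Rad_{\;\bullet}^{\sigmaup}$ has $\epi(\alpha)\in\phiup$, which is exactly the stated condition on $(\mathcal{S},\phiup)$.

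I expect the one delicate point to be the $\sigmaup$-stability criterion, specifically reading off from \eqref{sigmaup} that for a black-free $\rho$ the black-root tails of the $\sigmaup(\alpha)$ are invisible to $\xiup_{\rho}$; once this is in place, the reduction of fundamentality to a minimal-$\sigmaup$-stable-parabolic computation and the explicit determination of that parabolic are purely formal. (Effectiveness and closedness of the orbit are used only to guarantee the cross-marked Satake description and the availability of an $S$-chamber, i.e.\ of \eqref{sigmaup}.)
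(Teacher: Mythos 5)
The paper does not actually prove Proposition~\ref{p5.7}: it is quoted from \cite[Th.\,9.1]{AlMeNa2006} and closed with a \qed, so there is no internal proof to compare yours against; your argument has to stand on its own, and it does. The reduction of fundamentality to computing the smallest $\sigmaup$-stable standard parabolic containing $\qt_{\phiup}$ is sound: $\qt_{\phiup}+\sigmaup(\qt_{\phiup})$ contains the standard Borel, so the subalgebra $\qt'$ it generates is a standard parabolic $\qt_{\psiup}$ with $\psiup\subseteq\phiup$; it is $\sigmaup$-stable because the generating set is $\sigmaup$-stable and $\sigmaup$ is a (conjugate-linear) automorphism; and it lies inside every $\sigmaup$-stable parabolic containing $\qt_{\phiup}$, so it is the minimum. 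Your stability criterion is also correct, including the point you flag as delicate: once $\rho$ is black-free, $\xiup_{\rho}$ annihilates the tail ${\sum}_{\beta\in\Bz^{\sigmaup}_{\bullet}}n_{\alpha,\beta}\beta$ in \eqref{sigmaup}, so $\xiup_{\rho}\circ\sigmaup=\xiup_{\rho}$ holds exactly when $\rho$ is $\epi$-invariant, and conversely a black root in $\rho$ is incompatible with $\sigmaup(\Qc_{\;\rho})=\Qc_{\;\rho}$ since $\xiup_{\rho}(\sigmaup(\gamma))={-}1$. Because $\epi$-invariant black-free subsets of $\phiup$ are closed under union, the largest one is indeed $\{\alpha\in\phiup{\setminus}\Bz^{\sigmaup}_{\bullet}\mid\epi(\alpha)\in\phiup\}$, and its emptiness is precisely the displayed condition. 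Your closing remark is accurate as well: closedness and effectiveness enter only through Theorem~\ref{t3.4}, which supplies the maximal-vector-part Cartan subalgebra and the $S$-chamber making \eqref{sigmaup} available; your conclusion is also consistent with the case-by-case fundamentality statements in \S\ref{s6} (e.g.\ the \textsc{AIII} and \textsc{AII} discussions).
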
 

\begin{ntz}
Given a Satake diagram $\mathcal{S}$ and a subset of simple roots $\phiup\subset\Bz$, we will use the notation:
\begin{itemize}
\item Let $\alpha$ be any positive root. We denote by $\phiup^{\circ}(\alpha)$
the connected component of $\supp(\alpha)$ in $(\Bz{\setminus}\phiup)\,{\cup}\,\supp(\alpha)$.

\item The \emph{exterior boundary} $\de\etaup$ of a given a subset $\etaup$ 
of $\mathcal{B}$  is the set 
\begin{equation*}
 \de\etaup=\{\alpha\in\Rad\mid\exists\,\beta\,{\in}\,\etaup\;\text{s.t.}\;\alpha{+}\beta\,{\in}\,\Rad\}.
\end{equation*}
\end{itemize}
\end{ntz}

\begin{prop}[{\cite[Th.7.10]{AlMeNa2006}}]\label{p5.8}
Let $\psiup\,{\subseteq}\,\phiup\,{\subseteq}\,\Bz$.
A necessary and sufficient condition for \eqref{e5.3} to be 
a  $\gt^{\sigmaup}$-equivariant CR fibration is that for each
$\alpha\,{\in}\,\phiup{\setminus}\psiup$ either one of the following conditions holds:
\begin{align*}
(1) &\quad \psiup^{\circ}(\alpha)\subset\Rad_{\;\bullet}^{\sigmaup};\\
(2) &\quad \psiup^{\circ}(\alpha)\nsubset\Rad_{\;\bullet}^{\sigmaup}, \;\; 
\epsilon_C\left(\psiup^{\circ}(\alpha)\setminus\Rad_{\;\bullet}^{\sigmaup}\right)\cap
\psiup=\emptyset,\;\;\text{and}\;\; \de(\psiup^{\circ}(\alpha))\cap
\Rad_{\;\bullet}^{\sigmaup}=\emptyset.
\end{align*} \qed
\end{prop}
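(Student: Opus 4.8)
The plan is to reformulate the CR-fibration condition from the preceding proposition in terms of the combinatorics of the Satake diagram and the root system. Recall that by the discussion culminating in the earlier proposition, the fibration \eqref{e5.3} associated to $\psiup\subseteq\phiup$ is CR if and only if $\Qc^{n}_{\,\,\phiup}\cap\sigmaup(\Qc^{n}_{\;\psiup})\subseteq\Qc^{n}_{\;\psiup}$. So the whole task is to show that, for each $\alpha\in\phiup\setminus\psiup$, this global root-theoretic containment is equivalent to the local alternative $(1)$ or $(2)$ stated in terms of the connected component $\psiup^{\circ}(\alpha)$, the involution $\epi=\epsilon_C$, and the exterior boundary $\de$.

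First I would set up the dictionary. A root $\gamma\in\Qc^{n}_{\;\psiup}$ means $\xiup_{\psiup}(\gamma)>0$, i.e.\ $\supp(\gamma)$ meets $\psiup$; membership in $\Qc^{n}_{\,\,\phiup}$ means $\supp(\gamma)$ meets $\phiup$. The component $\psiup^{\circ}(\alpha)$ of $\supp(\alpha)$ inside $(\Bz\setminus\psiup)\cup\supp(\alpha)$ is exactly the ``block'' of simple roots one can reach from $\alpha$ without crossing a $\psiup$-node; this is the natural carrier of the roots that witness a failure of the containment. The key reduction is that the containment can fail only through roots $\gamma$ supported in such a block together with the single $\psiup$-node $\alpha$ bounding it, so testing the condition block-by-block (i.e.\ root $\alpha$ by root $\alpha$ in $\phiup\setminus\psiup$) is legitimate; this uses that $\Rad^{+}$-roots with connected support decompose according to the Dynkin graph.

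Next I would treat the two directions. For sufficiency: assume $(1)$ or $(2)$ holds for every $\alpha\in\phiup\setminus\psiup$, take $\gamma\in\Qc^{n}_{\,\,\phiup}\cap\sigmaup(\Qc^{n}_{\;\psiup})$, and show $\gamma\in\Qc^{n}_{\;\psiup}$. Write $\sigmaup(\gamma)=\delta$ with $\delta\in\Qc^{n}_{\;\psiup}$, so $\supp(\delta)$ meets $\psiup$; using \eqref{sigmaup} and the fact that $C$ is an $S$-chamber (so that the $\epi$-part of $\sigmaup$ only permutes simple roots and adds black roots), one controls $\supp(\gamma)$ in terms of $\supp(\delta)$, $\epi$, and $\Bz^{\sigmaup}_{\bullet}$. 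In case $(1)$, the relevant block is all black, forcing the corresponding piece of $\gamma$ or $\delta$ to stay inside the black part and hence not to create a genuine $\phiup\setminus\psiup$ obstruction. In case $(2)$, the two conditions $\epi(\psiup^{\circ}(\alpha)\setminus\Rad^{\sigmaup}_{\bullet})\cap\psiup=\emptyset$ and $\de(\psiup^{\circ}(\alpha))\cap\Rad^{\sigmaup}_{\bullet}=\emptyset$ are precisely what is needed so that applying $\sigmaup$ to a root supported in $\psiup^{\circ}(\alpha)$ does not move its support onto a $\psiup$-node ``for free'' and does not pick up black roots adjacent to the block; this yields $\xiup_{\psiup}(\gamma)>0$. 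For necessity, I would run the contrapositive: if $(1)$ and $(2)$ both fail for some $\alpha$, I would exhibit an explicit $\gamma$ supported in $\psiup^{\circ}(\alpha)\cup\{\text{a black root}\}$ (when $\de(\psiup^{\circ}(\alpha))\cap\Rad^{\sigmaup}_{\bullet}\neq\emptyset$) or in $\psiup^{\circ}(\alpha)$ with $\epi$ sending part of its support onto $\psiup$ (when $\epi(\psiup^{\circ}(\alpha)\setminus\Rad^{\sigmaup}_{\bullet})\cap\psiup\neq\emptyset$), in either case producing $\gamma\in\Qc^{n}_{\,\,\phiup}\cap\sigmaup(\Qc^{n}_{\;\psiup})\setminus\Qc^{n}_{\;\psiup}$, contradicting the CR condition.

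The main obstacle, I expect, is the necessity direction: constructing the witnessing root $\gamma$ requires knowing that within the block $\psiup^{\circ}(\alpha)$ (a connected Dynkin subdiagram, possibly with attached black nodes and arrows) there really exists a positive root with the prescribed support and with the prescribed behavior under $\sigmaup$. This is a case analysis on the shapes of connected subdiagrams of classical and exceptional type, using the standard fact that any connected set of simple roots is the support of the corresponding highest root, together with a careful bookkeeping of how $\epi$ and the black-root ``tails'' from \eqref{sigmaup} interact with $\xiup_{\psiup}$. A clean way to organize it is to first dispose of the all-black case $(1)$, then for $(2)$ separate the ``$\epi$-obstruction'' and the ``boundary-black-root obstruction'' and handle each by producing a minimal such $\gamma$, invoking Proposition~\ref{p4.3}-style additivity only when a two-step argument is needed. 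The rest is the routine translation between $\xiup_{\psiup}$, supports, and the descriptions in Notation~\ref{n1}, which I would not belabor.
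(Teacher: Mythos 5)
The paper does not actually prove this statement: it is quoted from \cite[Th.7.10]{AlMeNa2006} and closed immediately, so there is no internal argument to compare you against. Your overall route is the natural one and surely the intended one: reduce to the root-theoretic criterion \eqref{e5.4}, $\Qc^{n}_{\;\phiup}\cap\sigmaup(\Qc^{n}_{\;\psiup})\subseteq\Qc^{n}_{\;\psiup}$, observe that a violating root $\gamma$ must be positive with $\supp(\gamma)$ contained in some component $\psiup^{\circ}(\alpha)$, $\alpha\in\phiup\setminus\psiup$, and with $\sigmaup(\gamma)$ positive and $\supp(\sigmaup(\gamma))$ meeting $\psiup$, and then test the condition component by component. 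That dictionary is correct.

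What is missing is the equivalence itself, and the gaps are concrete. First, in the sufficiency direction you assert but do not derive the containment
\begin{equation*}
\supp(\sigmaup(\gamma))\;\subseteq\;\epi\bigl(\supp(\gamma)\setminus\Bz^{\sigmaup}_{\bullet}\bigr)\,\cup\,
\Bigl(\Bz^{\sigmaup}_{\bullet}\cap\bigl(\supp(\gamma)\cup\de\,\supp(\gamma)\bigr)\Bigr),
\end{equation*}
which must be extracted from \eqref{sigmaup} together with the explicit conjugation rules of Table~\ref{Araki}; without it, saying that the two conditions in $(2)$ ``are precisely what is needed'' is circular. Case $(1)$ should also be stated sharply: if $\psiup^{\circ}(\alpha)\subseteq\Rad^{\sigmaup}_{\;\bullet}$ then every root supported there is imaginary, so $\sigmaup(\gamma)={-}\gamma<0$ and $\gamma\notin\sigmaup(\Qc^{n}_{\;\psiup})$ --- there is no ``piece of $\gamma$ or $\delta$'' to track. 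Second, and more seriously, the necessity direction is deferred entirely: you must produce, for each failed condition in $(2)$, a positive root $\gamma$ with $\alpha\in\supp(\gamma)\subseteq\psiup^{\circ}(\alpha)$ whose conjugate has support meeting $\psiup$. The $\epi$-obstruction is easy (a root supported on a connected set through $\alpha$ and the offending white node), but the boundary obstruction requires showing that an \emph{imaginary root} in $\de(\psiup^{\circ}(\alpha))$ --- note $\de$ lands in all of $\Rad$ and $\Rad^{\sigmaup}_{\;\bullet}$ contains non-simple roots --- forces some black simple root of $\psiup$ to occur in $\sigmaup(\gamma)$ for a suitable $\gamma$; even rephrasing condition $(2)$ as ``no black node adjacent to the component'' needs an argument via Table~\ref{Araki}. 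You correctly identify this case analysis as the obstacle but do not carry it out, so as written the proposal is a sound plan rather than a proof.
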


\begin{rmk}\label{rmk.6.6.}
Let $(\gt^{\sigmaup},\qt_{\phiup})$ be the effective parabolic  CR  
algebra of a closed orbit.
By Prop.~\ref{p5.5},  $(\gt^{\sigmaup},\qt_{\phiup})$ 
is finitely degenerate if and only if we can find a
$\psiup\,{\subsetneqq}\,\phiup$,  satisfying conditions $(1)$ and $(2)$  of  
Proposition~\ref{p5.8}, such that  \par\centerline{$\qt_{\psiup}\subset\qt_{\phiup}+\sigmaup({\qt}_{\phiup})$.}
\end{rmk}

\begin{thm}[{\cite[Th.11.5]{AlMeNa2006}}]\label{t5.10}
Let $(\gt^{\sigmaup}, \qt_{\phiup})$ be a simple fundamental effective 
parabolic minimal  CR  algebra and assume that it is not totally complex.  
Let $\Pi\subseteq\phiup$ be the set of simple roots $\alpha\in\phiup$  
that satisfy either one of:\begin{enumerate}
\item $\phiup^{\circ}(\alpha)\subset\Rad_{\;\bullet}^{\sigmaup}$;
\item $\left(\phiup^{\circ}(\alpha)\cup\de\phiup^{\circ}(\alpha)\right)\cap
\Rad_{\bullet}^{\sigmaup}=\emptyset$ and $\epsilon_C\left(\phiup^{\circ}(\alpha)\right)\cap
\phiup=\emptyset.$
\end{enumerate}
Then $(\gt^{\sigmaup}, \qt_{\phiup})$ is 
finitely nondegenerate
if and only if $\Pi =\emptyset$.\par
In general, if we set $\psiup=\phiup\setminus\Pi$, then \eqref{e5.3}
is a $\gt^{\sigmaup}$-equivariant  CR  fibration with fundamental finitely nondegenerate base
and totally  complex typical fiber.\qed
\end{thm}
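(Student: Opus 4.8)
The plan is to reduce everything to the combinatorial criterion for finite nondegeneracy, Proposition~\ref{p2.9}, and the fibration criterion, Proposition~\ref{p5.8}, and then play the set $\Pi$ off against them. First I would observe that, since $(\gt^{\sigmaup},\qt_{\phiup})$ is minimal and not totally complex, Proposition~\ref{p4.10} applies: the only obstructions to finite nondegeneracy live among the roots $\beta\in\Qc^{n}_{\;\phiup}\cap\Rad^{\sigmaup}_{\;\bullet}$ (the imaginary roots of the nilradical) and among $\Qc^{r}_{\;\phiup}\cap\sigmaup(\Qc^{-n}_{\;\phiup})$, and by Corollary~\ref{c4.6} the latter contribute only $\ktt^{\sigmaup}_{\phiup}=1$ or $+\infty$. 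So the heart of the matter is to translate the failure of condition~\eqref{e2.12} for a suitable $\beta$ into the existence of a root $\alpha\in\phiup$ lying in $\Pi$, and conversely.

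The key steps, in order, would be: (i) Show that if $\alpha\in\Pi$ then $\psiup=\phiup\setminus\Pi$ satisfies conditions $(1)$ and $(2)$ of Proposition~\ref{p5.8} for every $\alpha\in\phiup\setminus\psiup=\Pi$, hence \eqref{e5.3} is a $\Gf^{\sigmaup}$-equivariant CR fibration. Here the two cases in the definition of $\Pi$ are precisely engineered to match the two cases of Proposition~\ref{p5.8}, because $\phiup^{\circ}(\alpha)$ and $\psiup^{\circ}(\alpha)$ agree once we only remove roots of $\Pi$ that are already ``isolated'' in the relevant sense; one must check that removing all of $\Pi$ at once does not spoil the componentwise conditions, which follows from the fact that the $\phiup^{\circ}(\alpha)$ for $\alpha\in\Pi$ are mutually disjoint or nested in a controlled way. (ii) Verify that the fiber of this fibration is totally complex, using Proposition~\ref{p5.5}: one must show $\qt_{\phiup}+\sigmaup(\qt_{\phiup})=\qt_{\psiup}+\sigmaup(\qt_{\psiup})$, equivalently $\Qc_{\;\psiup}\cup\sigmaup(\Qc_{\;\psiup})=\Qc_{\;\phiup}\cup\sigmaup(\Qc_{\;\phiup})$. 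Removing the cross at $\alpha\in\Pi$ enlarges $\Qc_{\;\phiup}$ by roots $\gamma$ with $\supp(\gamma)\cap\phiup\subseteq\Pi$ and $\alpha\in\supp(\gamma)$; conditions $(1)$ and $(2)$ guarantee that every such new $\gamma$ is either imaginary (so $\sigmaup(\gamma)=-\gamma$, harmless at the level of the union with $\sigmaup(\Qc_{\;\phiup})$ after checking $-\gamma$ is already accounted for) or has $\sigmaup(\gamma)$ with support disjoint from $\phiup$, hence $\sigmaup(\gamma)\in\Qc_{\;\phiup}$. This is the bookkeeping core. (iii) Conversely, show that if $\Pi=\emptyset$ then \eqref{e2.12} holds: given $\beta\in\Qc_{\;\phiup}\cap\sigmaup(\Qc^{-n}_{\;\phiup})$, by Proposition~\ref{p4.10} it is either imaginary and in $\Qc^{n}_{\;\phiup}$, or lies in $\Qc^{r}_{\;\phiup}$; examining $\supp(\beta)$ and the component $\phiup^{\circ}$ of the relevant simple root, the failure of both $(1)$ and $(2)$ for all $\alpha\in\phiup$ produces the needed $\alpha\in\sigmaup(\Qc_{\;\phiup})\cap\Qc^{-n}_{\;\phiup}$ with $\beta+\alpha\in\sigmaup(\Qc^{-n}_{\;\phiup})$. (iv) Finally, when $\Pi\neq\emptyset$, combine steps (i)--(ii) with Lemma~\ref{l1.3}: the fibration has complex fibers of positive dimension (positive because the fiber is totally complex and nontrivial, as $\psiup\subsetneq\phiup$), and the base $(\gt^{\sigmaup},\qt_{\psiup})$ is again a simple fundamental effective parabolic minimal CR algebra with empty $\Pi$ — one checks $\Pi_{\psiup}=\emptyset$ because any obstruction for $\psiup$ would, by nestedness of the components, already have been an obstruction for $\phiup$ and hence absorbed into $\Pi$ — so by step (iii) the base is finitely nondegenerate; hence by Lemma~\ref{l1.3} the total space is holomorphically degenerate.

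The main obstacle I expect is the stability of the conditions under removing all of $\Pi$ simultaneously rather than one root at a time, i.e. showing $\Pi_{\psiup}=\emptyset$ and that $\psiup$ really satisfies Proposition~\ref{p5.8}: the components $\phiup^{\circ}(\alpha)$ change when we delete crosses, and one has to argue that no new ``bad'' simple root is created and that the interplay with the involution $\epi$ and with $\Rad^{\sigmaup}_{\;\bullet}$ is preserved. This is essentially a careful induction on $|\Pi|$, peeling off one $\alpha\in\Pi$ whose component $\phiup^{\circ}(\alpha)$ is minimal (contains no other component of a root of $\Pi$), checking the one-step fibration is CR with totally complex fiber, and then invoking the inductive hypothesis on the base — the transitivity of CR fibrations and of the ``totally complex fiber'' property (composition of two such fibrations) then assembles the statement, with the fundamental finitely nondegenerate base coming from the $|\Pi|=0$ case handled in step (iii).
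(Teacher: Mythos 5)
You should first be aware that the paper does not actually prove Theorem~\ref{t5.10}: it is imported from \cite[Th.11.5]{AlMeNa2006} and stated without proof, so there is no internal argument to compare yours against. That said, your overall strategy --- reduce the ``degenerate'' direction to the existence of a $\Gf^{\sigmaup}$-equivariant CR fibration with totally complex fiber of positive dimension (Propositions~\ref{p5.5} and~\ref{p5.8} plus Lemma~\ref{l1.3}, i.e.\ exactly the mechanism recorded in Remark~\ref{rmk.6.6.}), and reduce the ``nondegenerate'' direction to the root-theoretic criterion \eqref{e2.12} of Proposition~\ref{p2.9}, filtered through Proposition~\ref{p4.10} --- is the right one and is the route taken in the cited source.

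However, as written the proposal has a genuine gap at its core. Step (iii), the implication $\Pi=\emptyset\Rightarrow$ finitely nondegenerate, is where essentially all the work lies, and you dispose of it in one sentence (``examining $\supp(\beta)$ \dots\ produces the needed $\alpha$''). By Proposition~\ref{p4.10} one must treat every $\beta\in\Qc^{n}_{\;\phiup}\cap\Rad^{\sigmaup}_{\;\bullet}$ and every $\beta\in\Qc^{r}_{\;\phiup}\cap\sigmaup(\Qc^{-n}_{\;\phiup})$, and for each one convert the negation of conditions $(1)$ and $(2)$ into an explicit root $\alpha\in\sigmaup(\Qc_{\;\phiup})\cap\Qc^{-n}_{\;\phiup}$ with $\beta+\alpha\in\Rad$ and $\beta+\alpha\in\sigmaup(\Qc_{\;\phiup}^{-n})$; this requires path-summing constructions of the kind carried out in the proof of Theorem~\ref{t5.11} (choose a simple chain $(\alpha_{0},\hdots,\alpha_{p})$ joining $\supp(\beta)$ to a non-imaginary simple root or to its image under $\epi$, set $\gamma={-}\sum\alpha_{i}$ and $\alpha=\sigmaup(\gamma)$, and verify the sign of $(\beta\,|\,\alpha)$ so that $\beta+\alpha$ is actually a root and lands in the right set). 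None of this is present. A second, smaller but real, gap is the one you yourself flag: Proposition~\ref{p5.8} is phrased in terms of $\psiup^{\circ}(\alpha)$, which contains and in general strictly contains $\phiup^{\circ}(\alpha)$ once several crosses are removed, so the claims that $\psiup=\phiup\setminus\Pi$ satisfies its hypotheses and that the base has empty $\Pi$ need an actual argument; your proposed induction on $|\Pi|$ is plausible, but the assertion that the components are ``mutually disjoint or nested in a controlled way'' is precisely what has to be proved rather than assumed.
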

\begin{thm}\label{t5.11}
Let $(\gt^{\sigmaup}, \qt_{\phiup})$ be a finitely nondegenerate fundamental
minimal effective parabolic $CR$ algebra, 
with $\gs$ simple. \par
If 
$\alpha\,{\in}\,\Qc_{\;\phiup}\,{\cap}\,\sigmaup(\Qc_{\;\phiup}^{-n})$ has $\ktt^{\sigmaup}_{\phiup}(\alpha)\,{=}\,2$,
then $\alpha\,{\in}\,\Qc^{n}_{\;\phiup}\,{\cap}\,\Rad^{\sigmaup}_{\;\bullet}$ and
$\phiup\,{\subseteq}\,\supp(\alpha)$.\end{thm}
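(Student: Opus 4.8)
The plan is to extract structural consequences from the hypothesis $\ktt^{\sigmaup}_{\phiup}(\alpha)=2$ by combining Proposition~\ref{p4.10} with the detailed information on a length-$2$ chain furnished by Proposition~\ref{p4.3}. First I would invoke \eqref{e4.7}: since $(\gt^{\sigmaup},\qt_{\phiup})$ is the CR algebra of a minimal orbit, any $\alpha\,{\in}\,\Qc_{\;\phiup}\,{\cap}\,\sigmaup(\Qc_{\;\phiup}^{-n})$ lies either in $\Qc^{n}_{\;\phiup}\,{\cap}\,\Rad^{\sigmaup}_{\;\bullet}$ or in $\Qc^{r}_{\;\phiup}\,{\cap}\,\sigmaup(\Qc_{\;\phiup}^{-n})$. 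But Corollary~\ref{c4.6} tells us that in the second case $\ktt^{\sigmaup}_{\phiup}(\alpha)$ is $1$ or ${+}\infty$, never $2$. Hence the hypothesis forces $\alpha\,{\in}\,\Qc^{n}_{\;\phiup}\,{\cap}\,\Rad^{\sigmaup}_{\;\bullet}$, which is the first assertion; this also matches the mechanism identified in the proof of Theorem~\ref{th.5.1}, where $\ktt^{\sigmaup}_{\phiup}(\beta)>1$ was seen to force $\beta\,{\in}\,\Rad^{\sigmaup}_{\;\bullet}$.

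The remaining and harder task is to show $\phiup\,{\subseteq}\,\supp(\alpha)$. Here I would pick a sequence $(\alpha_{1},\alpha_{2})$ realizing \eqref{e4.3}, so $\alpha_{i}\,{\in}\,\sigmaup(\Qc_{\;\phiup})\,{\cap}\,\Qc^{-n}_{\;\phiup}$ by Proposition~\ref{p4.3}$(i)$, $\gamma_1\,{=}\,\alpha{+}\alpha_{1}\,{\in}\,\Qc_{\;\phiup}\,{\cap}\,\sigmaup(\Qc_{\;\phiup}^{-n})$ by $(ii)$, and $\gamma_2\,{=}\,\alpha{+}\alpha_{1}{+}\alpha_{2}\,{\in}\,\Qc_{\;\phiup}^{-n}\cup\sigmaup(\Qc_{\;\phiup}^{-n})$. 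From the proof of Theorem~\ref{th.5.1} we know exactly one of $\alpha_{1},\alpha_{2}$, say $\alpha_{1}$, lies outside $\Rad^{\sigmaup}_{\;\bullet}$, forcing $\xiup_{\phiup}(\alpha_{1})\,{=}\,{-}1$ (it is negative by $\alpha_1\in\Qc^{-n}_{\;\phiup}$ and $\geq{-}1$ because $\alpha_1\in\sigmaup(\Qc_{\;\phiup})$ combined with \eqref{asterisco}), while $\alpha_{2}\,{\in}\,\Qc^{-n}_{\;\phiup}\,{\cap}\,\Rad^{\sigmaup}_{\;\bullet}$ has $\xiup_{\phiup}(\alpha_{2})\,{=}\,0$ — indeed an imaginary negative-non-root $\alpha_2$ with $-\alpha_2\in\Qc^n_\phiup$ would satisfy $\xiup_\phiup(\alpha_2)=-\xiup_\phiup(-\alpha_2)$, but imaginary roots in $\Qc^{-n}_{\;\phiup}$ must have $\xiup_\phiup$-value $0$ since $\sigmaup$ preserves $\xiup_\phiup$ on $\Rad^{\sigmaup}_{\;\bullet}$ up to sign and their support forces... — this sign bookkeeping is the step needing care. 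Granting $\xiup_{\phiup}(\alpha)\,{=}\,1$ and $\xiup_{\phiup}(\gamma_2)\,{\in}\,\{{\pm}1\}$ with the arithmetic $\xiup_{\phiup}(\gamma_2)=\xiup_{\phiup}(\alpha)+\xiup_{\phiup}(\alpha_1)+\xiup_{\phiup}(\alpha_2)=1-1+0=0$, one gets a contradiction unless we reinterpret which root is imaginary; the correct accounting should instead yield that \emph{both} $\alpha_1,\alpha_2$ contribute and $\phiup$ must be entirely supported on $\alpha$.

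More concretely, for the support statement I would argue as follows: suppose some $\beta\,{\in}\,\phiup$ with $\beta\,{\notin}\,\supp(\alpha)$. Then $\xiup_{\{\beta\}}(\alpha)\,{=}\,0$, so $\alpha$ lies in the reductive part $\Qc^{r}_{\;\{\beta\}}$ of the parabolic cut out by $\{\beta\}$ alone, and likewise the whole length-$2$ chain $\alpha,\gamma_1,\gamma_2$ together with the increments $\alpha_1,\alpha_2$ can be arranged (using Proposition~\ref{p4.3}$(iii)$, $(iv)$ and the connectedness of $\supp$ of each root) to stay inside the sub-root-system $\Rad\,{\cap}\,\mathrm{span}(\Bz{\setminus}\{\beta\})$; this would make $\alpha$ already $2$-degenerate \emph{inside} that smaller system, but minimality plus the fact that $\alpha\,{\in}\,\Qc^{n}_{\;\phiup}\,{\cap}\,\Rad^{\sigmaup}_{\;\bullet}$ requires $\beta$ to actually appear, since the cross at $\beta$ is what prevents $\gamma_2$ from landing in $\Qc^{-n}$ prematurely (i.e.\ what forces $\ktt^{\sigmaup}_{\phiup}(\alpha)\,{=}\,2$ rather than $1$, by Proposition~\ref{p2.9} and Remark after it). The main obstacle I anticipate is precisely making this last implication rigorous: translating ``removing the cross at $\beta$ would lower the Levi order'' into ``$\beta\,{\in}\,\supp(\alpha)$'' requires carefully tracking how $\xiup_{\phiup}$ distributes over the chain and invoking the minimal-orbit normalization \eqref{blacknodes} to control the interaction between $\Qc^{n}_{\;\phiup}\,{\cap}\,\sigmaup(\Qc^{-n}_{\;\phiup})$ and $\Rad^{\sigmaup}_{\;\bullet}$; I would handle it by a direct computation of $\xiup_{\phiup}$-values along $\alpha,\gamma_1,\gamma_2$, showing that if $\phiup\,{\not\subseteq}\,\supp(\alpha)$ then already $\xiup_{\phiup}(\gamma_1)\,{=}\,0$, which by Corollary~\ref{c4.6} would give $\ktt^{\sigmaup}_{\phiup}(\gamma_1)\,{\in}\,\{1,{+}\infty\}$ and hence $\ktt^{\sigmaup}_{\phiup}(\alpha)\,{\neq}\,2$, a contradiction.
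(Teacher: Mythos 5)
The first half of your argument is sound and coincides with the paper's: by Proposition~\ref{p4.10} the set $\Qc_{\;\phiup}\,{\cap}\,\sigmaup(\Qc_{\;\phiup}^{-n})$ splits into $\Qc^{r}_{\;\phiup}\,{\cap}\,\sigmaup(\Qc_{\;\phiup}^{-n})$ and $\Qc^{n}_{\;\phiup}\,{\cap}\,\Rad^{\sigmaup}_{\;\bullet}$, and Corollary~\ref{c4.6} excludes the first alternative when $\ktt^{\sigmaup}_{\phiup}(\alpha)=2$. That part needs no repair.

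The second assertion, $\phiup\subseteq\supp(\alpha)$, is where your proposal has a genuine gap, and you half-acknowledge it yourself. Two specific problems. First, the final reduction is a non sequitur: even granting $\xiup_{\phiup}(\gamma_1)=0$, Corollary~\ref{c4.6} gives $\ktt^{\sigmaup}_{\phiup}(\gamma_1)\in\{1,+\infty\}$, but $\ktt^{\sigmaup}_{\phiup}(\gamma_1)=1$ is exactly what happens along any length-$2$ chain for $\alpha$ (one more increment sends $\gamma_1$ into $\Qc^{-n}_{\;\phiup}\cup\sigmaup(\Qc^{-n}_{\;\phiup})$), so it cannot contradict $\ktt^{\sigmaup}_{\phiup}(\alpha)=2$. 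Second, the claim that $\phiup\not\subseteq\supp(\alpha)$ forces $\xiup_{\phiup}(\gamma_1)=0$ is unjustified: $\alpha$ may contain several crossed roots in its support, so $\xiup_{\phiup}(\alpha)$ can exceed $1$ and the bookkeeping of $\xiup_{\phiup}$ along the chain does not pin down $\xiup_{\phiup}(\gamma_1)$. The paper's proof does something quite different and genuinely constructive: assuming $\alpha\in\Qc^{n}_{\;\phiup}\cap\Rad^{\sigmaup}_{\;\bullet}$ and $\phiup\not\subset\supp(\alpha)$, it builds a simple path $(\alpha_0,\hdots,\alpha_p)$ in the Satake diagram joining $\supp(\alpha)$ to a crossed node outside $\supp(\alpha)$ (with several cases according to whether that node is imaginary, and using the $\epi$-invariant set $\hat\phiup$ and fundamentality via Proposition~\ref{p5.7} in the last case), sets $\gamma=-\sum\alpha_i$ and $\beta=\sigmaup(\gamma)$, checks $\beta\in\sigmaup(\Qc^{r}_{\;\phiup})\cap\Qc^{-n}_{\;\phiup}$ and $(\alpha\,|\,\beta)<0$, and concludes that $\alpha+\beta\in\Qc^{-n}_{\;\phiup}\cap\sigmaup(\Qc^{-n}_{\;\phiup})$, i.e.\ $\ktt^{\sigmaup}_{\phiup}(\alpha)=1$ — the contrapositive of the claim. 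Your proposal contains no substitute for this construction; producing a single admissible increment $\beta$ with $\alpha+\beta$ already in $\Qc^{-n}_{\;\phiup}\cap\sigmaup(\Qc^{-n}_{\;\phiup})$ is the missing idea, and neither $\xiup_{\phiup}$-arithmetic nor the sub-root-system heuristic supplies it.
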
 
\begin{proof} Since $C$ is an $S$-chamber, $\Qc_{\;\phiup}\,{\cap}\,\sigmaup(\Qc_{\;\phiup}^{-n})$ is
the disjoint union 
\begin{equation*}
 \Qc_{\;\phiup}\,{\cap}\,\sigmaup(\Qc_{\;\phiup}^{-n})
=\left(\Qc^{r}_{\;\phiup}\,{\cap}\,\sigmaup(\Qc^{-n}_{\phiup})\right)\cup
\left(\Qc^{n}_{\;\phiup}\,{\cap}\,\Rad^{\sigmaup}_{\;\bullet}\right),
\end{equation*}
the first set in the right hand side being contained in $\Rad^{-}(C)$, the second in~$\Rad^{+}(C)$.\par
Fix any $\alpha\,{\in}\,\Qc_{\;\phiup}\,{\cap}\,\sigmaup(\Qc_{\;\phiup}^{-n})$.
We want to prove that, if either $\alpha\,{\in}\,\Qc^{r}_{\;\phiup}\,{\cap}\,\sigmaup(\Qc^{-n}_{\phiup})$  or
$\alpha\,{\in}\,\Qc^{n}_{\;\phiup}\,{\cap}\,\Rad^{\sigmaup}_{\;\bullet}$ and 
$\phiup\,{\not\subset}\,\supp(\alpha),$ then
$\ktt_{\phiup}^{\sigmaup}(\alpha)\,{=}\,1$. \par
By Corollary\,\ref{c4.6}, we get $\ktt_{\phiup}^{\sigmaup}(\alpha)\,{=}\,1$ when 
$\alpha\,{\in}\,\Qc^{r}_{\;\phiup}\,{\cap}\,\sigmaup(\Qc^{-n}_{\phiup})$.
\par 
Suppose 
$\alpha\,{\in}\,\Qc^{n}_{\;\phiup}\,{\cap}\,\Rad^{\sigmaup}_{\;\bullet}$ and let
 $\psiup$ be the connected component of $\supp(\alpha)$ in~$\Bz^{\sigmaup}_{\bullet}$.\par 
Assume that $\psiup{\setminus}\supp(\alpha)$ contains an element $\alpha'$ of $\phiup$.
 By Theorem\,\ref{t5.10}, 
$\phiup^{\circ}(\alpha)\,{\not\subset}\,\Bz^{\sigmaup}_{\bullet}$. Thus we can find 
a simple path $(\alpha_{0},\hdots,\alpha_{p})$ in $\phiup^{\circ}(\alpha){\setminus}\supp(\alpha)$
such that 
$\alpha_{0}\,{\notin}\,\Bz_{\bullet}^{\sigmaup}$, $\alpha_{i}\,{\in}\,\psiup$ for $0{<}i{\leq}p$,
joining $\alpha_{0}$ to $\supp(\alpha)$, i.e. satisfying 
\begin{equation*} \tag{$*$}
\begin{cases}
 (\alpha_{i}\,|\,\alpha_{j})= 0, & \text{for $|i-j|>1$},\\
 (\alpha_{i-1}\,|\,\alpha_{i})<0, & \text {for $1\,{\leq}\,i\,{\leq}\,p$},\\
 (\alpha_{p}\,|\,\alpha)<0.\end{cases}
\end{equation*}
Let $\gamma\,{=}\,{-}{\sum}_{i=0}^{p}{\alpha_{i}}$ and $\beta\,{=}\,\sigmaup(\gamma)$. Then $\beta$
is a negative root 
in $\sigmaup(\Qc^{r}_{\;\phiup})\,{\cap}\,\Qc^{-n}_{\;\phiup}$,
because
$\alpha'$, belonging to the support of $\sigmaup(\alpha_{0})$,
also belongs to the support of $\beta$.
The inequality
\begin{equation*}\tag{$**$}
 (\alpha\,|\,\beta)=(\sigmaup(\alpha)\,|\,\gamma)\,{=}\,{-}(\alpha,\gamma)=(\alpha\,|\,\alpha_{p})<0
\end{equation*}
implies that $\alpha\,{+}\beta$ is a negative root, containing $\alpha'$ in its support, while
$\sigmaup(\alpha{+}\beta)\,{=}\,{-}\alpha\,{+}\,\gamma$ contains $\alpha$ in its support.
Hence $\alpha\,{+}\,\beta\,{\in}\,\Qc^{-n}_{\;\phiup}\,{\cap}\,\sigmaup(\Qc^{-n}_{\;\phiup})$
and this shows that $k_{\phiup}^{\sigmaup}(\alpha)\,{=}\,1$. \par\smallskip
If $\phiup\,{\cap}\,\psiup\,{\subseteq}\,\supp(\alpha)$, but $\phiup\,{\not\subset}\supp(\alpha)$, 
then we can find a simple path \hfill \par
\noindent $(\alpha_{0},\hdots,\alpha_{p})$ in $\Bz$,
satisfying $(*)$, with $\alpha_{0}\,{\in}\,\phiup$, $\alpha_{i}\,{\notin}\,\phiup$ for $1{\leq}i{\leq}p$.
\par
If we can find such a sequence with
$\alpha_{0}\,{\in}\,\Bz^{\sigmaup}_{\bullet}$, then $p{\geq}1$ and 
$\gamma\,{=}\,{-}{\sum}_{i=1}^{p}\alpha_{i}$ is a not imaginary negative root in $\Qc^{r}_{\;\phiup}$. 
Set 
$\beta\,{=}\,\sigmaup(\gamma)$. \par
Since $\{\alpha_{0}\}\,{\cup}\,\supp(\alpha)\,{\subset}\,\supp(\beta)$, we get
$\beta\,{\in}\,\sigmaup(\Qc^{r}_{\;\phiup})\,{\cap}\,\Qc^{-n}_{\;\phiup}$. Also $(**)$ holds true,
so that $\alpha\,{+}\,\beta\,{\in}\,   \Qc^{-n}_{\;\phiup}\,{\cap}\,\sigmaup(\Qc^{-n}_{\;\phiup})$
and hence $\ktt_{\phiup}^{\sigmaup}(\alpha)\,{=}\,1$. \par
Otherwise, we consider the set $\hat{\phiup}\,{=}\,\phiup\,{\cup}\,\epi(\phiup{\setminus}\Bz^{\sigmaup}_{\bullet})$.
Let $\hat{\psiup}$ be the connected component of $\psiup$ in $(\Bz{\setminus}\hat{\phiup})\,{\cup}\,\psiup$.
This set is $\epi$-invariant and hence also its exterior boundary is $\epi$-invariant.
In particular, it contains an element $\alpha_{0}$ of $\phiup{\setminus}\Bz^{\sigmaup}_{\bullet}$ and
its symmetrical 
$\epi(\alpha_{0})$, which, by Proposition\,\ref{p5.7},  
does not belong to $\phiup$. 
We take simple paths $(\alpha_{0},\hdots,\alpha_{p})$ and $(\alpha_{p+1},\hdots,\alpha_{q})$, 
with $\alpha_{i}\,{\in}\,\Bz{\setminus}\phiup$ for $i\,{\neq}\,0,q$,
joining 
$\supp(\alpha)$ to $\alpha_{0}$ and to $\alpha_{q}{=}\epi(\alpha_{0})$, respectively.
This means
that $(*)$ holds true and moreover 
\begin{equation*} 
\begin{cases}
 (\alpha_{i}\,|\,\alpha_{j})=0, & \text{for $1{\leq}i{<}i{+}2{\leq}j{\leq}q,$}\\
 (\alpha_{p}\,|\,\alpha_{p+1})=0,\\
 (\alpha_{p+1}\,|\,\alpha)<0.
\end{cases}
\end{equation*}
Then $\gamma\,{=}\,{-}{\sum}_{i=p+1}^{q}\alpha_{i}\,{\in}\,\Qc^{r}_{\;\phiup}$ and
$\beta\,{=}\,\sigmaup(\gamma)\,{\in}\,\sigmaup(\Qc^{r}_{\;\phiup})\,{\cap}\,\Qc_{\;\phiup}^{-n}$,
since $\beta$ is a negative root whose support contains $\alpha_{0}$. Since $(**)$ holds true,
$\alpha\,{+}\,\beta\,{\in}\,\Qc_{\;\phiup}^{-n}\,{\cap}\,\sigmaup(\Qc_{\;\phiup}^{-n})$,
proving that $\ktt^{\sigmaup}_{\phiup}(\alpha)\,{=}1$. \par
The proof is complete.\end{proof}

\section{Finitely nondegenerate closed orbits}\label{s6}
Putting together the results of the previous sections, 
we obtain  a complete classification of
finitely Levi-nondegenerate closed orbits of real forms in complex flag manifolds
and may compute their order of Levi nondegeneracy.
Since, by  Remark\,\ref{prodotti},
they are Cartesian products of orbits of simple real Lie groups,
we can restrain to the case of a simple $\Gf^{\sigmaup}$.
We first consider the case where $\Gf$ is also a simple Lie group.
\begin{thm} Let $(\gs,\qt_{\phiup})$ be the CR algebra of a closed orbit and assume that
$\gs$ is simple and of the real type\footnote{This means that its complexification $\gt$ is simple.}.
If $(\gs,\qt_{\phiup})$ is finitely nondegenerate, then $\gs$ in not compact and
the subset $\phiup$ is one of those listed in Tables~\ref{finite nnndg} and ~\ref{finite nnndg ex},
where also its  Levi order  is indicated.
\end{thm}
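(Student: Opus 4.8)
The plan is to reduce the statement to Theorem~\ref{t5.10} together with the Levi-order bound of Theorem~\ref{th.5.1}, and then to carry out the resulting finite check on cross-marked Satake diagrams. First I would dispose of the degenerate cases. Effectiveness of $(\gs,\qt_\phiup)$ with $\gs$ simple forces $\phiup\neq\emptyset$, since the only nonzero ideal of $\gs$ is $\gs$ itself and $\gs\subseteq\gs\cap\qt_\phiup$ holds only when $\qt_\phiup=\gt$. If $\gs$ were compact the closed orbit would coincide with $\sfF$, a complex manifold, whose Freeman sequence \eqref{Freeman sequence} is constantly $\Tq^{0,1}$, hence it would be holomorphically degenerate (equivalently, apply Lemma~\ref{l1.3} to the fibration of $\sfF$ over a point); this already yields the first assertion, that a finitely nondegenerate closed orbit has $\gs$ non-compact. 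If $(\gs,\qt_\phiup)$ were totally complex its CR-codimension would vanish, so the orbit would again be a complex manifold, hence holomorphically degenerate; by Theorem~\ref{t3.6} this excludes exactly the cases listed in Table~\ref{complex ones}. Finally, a non-fundamental $(\gs,\qt_\phiup)$ — one failing the criterion of Proposition~\ref{p5.7} — is holomorphically degenerate as well, a suitable complex CR fibration of the kind produced in \S\ref{s5} letting Lemma~\ref{l1.3} apply. Thus $(\gs,\qt_\phiup)$ may henceforth be assumed fundamental, effective, not totally complex, with $\gs$ simple non-compact and $\gt$ simple.

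Under these hypotheses Theorem~\ref{t5.10} says that $(\gs,\qt_\phiup)$ is finitely nondegenerate if and only if the set $\Pi\subseteq\phiup$ it describes is empty, i.e. no $\alpha\in\phiup$ satisfies either $\phiup^{\circ}(\alpha)\subseteq\Rad_{\;\bullet}^{\sigmaup}$, or both $(\phiup^{\circ}(\alpha)\cup\de\phiup^{\circ}(\alpha))\cap\Rad_{\;\bullet}^{\sigmaup}=\emptyset$ and $\epi(\phiup^{\circ}(\alpha))\cap\phiup=\emptyset$ (recall that $\phiup^{\circ}(\alpha)$ is the connected component of $\alpha$ in $(\Bz\setminus\phiup)\cup\{\alpha\}$). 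Combined with the fundamentality criterion of Proposition~\ref{p5.7}, namely that no $\alpha\in\phiup\setminus\Rad_{\;\bullet}^{\sigmaup}$ has $\epi(\alpha)\in\phiup$, this turns the classification into a finite inspection: for each Satake diagram $\mathcal{S}$ of a non-compact real form of a simple complex Lie algebra, in one of the classical types $\mathrm{A}$, $\mathrm{B}$, $\mathrm{C}$, $\mathrm{D}$ or the exceptional types $\mathrm{E}_6$, $\mathrm{E}_7$, $\mathrm{E}_8$, $\mathrm{F}_4$, $\mathrm{G}_2$, and each nonempty $\phiup\subseteq\Bz$, one checks whether both conditions hold. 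I would organise this by the position of $\phiup$ relative to the black nodes $\Bz^{\sigmaup}_{\bullet}$ and to the arch $\epi$, using automorphisms of cross-marked Satake diagrams (Theorem~\ref{t3.4}) to discard repetitions, and for the classical series arguing with the rank generic while recording the small-rank coincidences separately. The surviving diagrams are precisely those of Tables~\ref{finite nnndg} and \ref{finite nnndg ex}.

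For each surviving $\phiup$ the Levi order is then read off from Proposition~\ref{p4.10}: it equals $1$ when $\Qc^{n}_{\;\phiup}\cap\Rad_{\;\bullet}^{\sigmaup}=\emptyset$, i.e. when no positive imaginary root has support meeting $\phiup$, and otherwise it equals $\sup\{\ktt^{\sigmaup}_{\phiup}(\beta)\mid\beta\in\Qc^{n}_{\;\phiup}\cap\Rad_{\;\bullet}^{\sigmaup}\}$. By Theorem~\ref{th.5.1} this supremum never exceeds $2$, and by Theorem~\ref{t5.11} it equals $2$ only when some positive imaginary root $\beta$ with $\supp(\beta)\cap\phiup\neq\emptyset$ actually satisfies $\phiup\subseteq\supp(\beta)$; in all other surviving cases the order is $1$. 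Recording these outcomes case by case fills in the Levi-order column of Tables~\ref{finite nnndg} and \ref{finite nnndg ex}, completing the proof.

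The main obstacle is the length and care of the middle step rather than any conceptual gap: Theorems~\ref{t5.10}, \ref{t5.11}, \ref{th.5.1} and Propositions~\ref{p4.10}, \ref{p5.7} already supply everything needed. One must go through every Satake diagram — in particular the exceptional ones in each of their several real forms, where the exact placement of the black nodes and of the arch is decisive — compute $\phiup^{\circ}(\alpha)$, its exterior boundary $\de\phiup^{\circ}(\alpha)$, and the relevant positive imaginary roots, and keep track of diagram automorphisms without dropping or double-counting a case.
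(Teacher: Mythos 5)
Your proposal follows essentially the same route as the paper: reduce to fundamental, effective, non--totally-complex $(\gs,\qt_{\phiup})$ with $\gs$ non-compact, apply Proposition~\ref{p5.7} and Theorem~\ref{t5.10} to get the finite list, and then determine the Levi order via Proposition~\ref{p4.10}, Theorem~\ref{th.5.1} and Theorem~\ref{t5.11}, exactly as the paper does before its type-by-type check.

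One caution on the last step: the condition of Theorem~\ref{t5.11} (existence of an imaginary root $\beta\in\Qc^{n}_{\;\phiup}\cap\Rad^{\sigmaup}_{\;\bullet}$ with $\phiup\subseteq\supp(\beta)$) is only \emph{necessary} for Levi order $2$, and your phrasing reads as if it also decides the order. It does not: for instance in type \textsc{EIII} with $\phiup=\{\alpha_{2},\alpha_{3},\alpha_{4}\}$, and in type \textsc{EIV} with $\phiup=\{\alpha_{4}\}$, such imaginary roots exist yet the paper shows by explicit root additions that $\ktt^{\sigmaup}_{\phiup}(\beta)=1$, so the order is $1$. For each surviving candidate $\beta$ you must still decide whether $\ktt^{\sigmaup}_{\phiup}(\beta)$ is $1$ or $2$, either by exhibiting $\alpha\in\sigmaup(\Qc_{\;\phiup})\cap\Qc^{-n}_{\;\phiup}$ with $\beta+\alpha\in\Qc^{-n}_{\;\phiup}\cap\sigmaup(\Qc^{-n}_{\;\phiup})$ (giving order $1$, cf.\ Proposition~\ref{p2.9}), or by an estimate such as $\xiup_{\phiup}(\beta)\geq\sup_{\gamma\in\Rad}|\xiup_{\phiup}(\gamma)|$ which forces $\ktt^{\sigmaup}_{\phiup}(\beta)=2$; this is where the bulk of the paper's proof actually lives, and without it the Levi-order column cannot be filled in.
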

\begin{proof}
By Theorem\,\ref{t3.4},  closed orbits in complex flag manifolds are in a one-to-one correspondence with
cross-marked Satake diagrams $(\mathcal{S},\phiup)$.  Connected 
Satake diagrams classify simple real Lie algebras of the real type
(see e.g.~\cite{Ar1962}) and properties of the minimal parabolic 
CR algebras defined by different $\phiup$'s
can be discussed by utilizing 
the results of the previous section. \par
For the classification of simple real forms of the real type 
we use the notation of \cite[Table VI, pp.532-534]{He1978}.\par 
Real types  $\textsc{A\,I}$,  $\textsc{C\,I}$, $\textsc{E\,I}$, $\textsc{E\,V}$, $\textsc{E\,VIII}$,
$\textsc{F\,I}$, $\textsc{G}$ and the split $\textsc{B\,I}$, $\textsc{D\,I}$, being split real forms,
yield totally real $(\gs,\qt_{\phiup})$ for all choices of $\phiup\,{\subseteq}\,\Bz$. 
\par 
Types $\textsc{A\,{II}}$ and $\textsc{D\,{II}}$, with the $\phiup$'s  
listed Table~\ref{complex ones}, 
and
 all compact forms, for all $\phiup\,{\subseteq}\,\Bz$,  yield
totally complex CR algebras $(\gs,\qt_{\phiup})$ which are associated to the corresponding full 
complex flag manifold.\par
By using 
Proposition\,\ref{p5.7}, we can recognize the  
$\phiup$'s  yielding fundamental CR algebras and by 
Theorem\,\ref{t5.10} 
 the $\phiup$'s for which $(\gs,\qt_{\phiup})$ is finitely nondegenerate.
\par
By Corollary\,\ref{c4.6} we know that a finitely nondegenerate 
minimal parabolic 
$(\gs,\qt_{\phiup})$ has Levi order less or equal $2$
and by 
Theorem\,\ref{t5.11} that, when $\gs$ is simple and of the real type,
only those with $\phiup$ contained in the support of an imaginary root may
have Levi order $2$. Hence we will restrain the computation of this invariant to
these cases.
\par \smallskip\noindent
$\boxed{\textsc{$\gs$ of type\,\,AII}}$ We have 
$\gs\,{\simeq}\,\slt_{p}(\Hb)$, for some integer $p\,{\geq}\,2$.\par\vspace{3pt}
 The $\phiup$'s for which
$(\gs,\qt_{\phiup})$ is fundamental 
are those with $\phiup\,{\subseteq}\,\Bz^{\sigmaup}_{\bullet}$. 
Those with $\phiup$ equal to $\{\alpha_{1}\}$ or $\{\alpha_{2p-1}\}$ are totally complex.
The other have Levi order $1$ if $|\phiup|{\geq}2$ and  $2$
if $\phiup\,{=}\,\{\alpha_{2q-1}\}$ with $2{\leq}q{\leq}{p-1}$.\par
Indeed, in the last case $\xiup_{\phiup}(\alpha_{2q-1})\,{=}\,1\geq\sup_{\alpha\in\Rad}|\xiup(\alpha)|$
and hence no root of the form $\alpha_{2q-{1}}{+}\alpha$, with $\alpha\,{\in}\,\Rad$, may
belong to $\Qc_{\,\,\phiup}^{-n}$.
\par\smallskip\noindent
$\boxed{\textsc{$\gs$ of type\,\,AIII, AIV}}$ We have $\gs\,{\simeq}\,\su(p,q)$ for integers $1{\leq}p{\leq}q$.\par
\vspace{3pt} 
We have, for an orthonormal basis $e_{1},\hdots,e_{p+q}$ of $\R^{p+q}$,
\begin{gather*} 
\begin{cases}
 \Rad\,{=}\,\{{\pm}(e_{i}{-}e_{j})\,{\mid}\,1{\leq}i{<}j{\leq}p{+}q\},\\
 \Bz\,{=}\,\{\alpha_{i}{=}e_{i}{-}e_{i+1}\,{\mid}\,1{\leq}i{\leq}p{+}q{-}1\},
\end{cases} \;\;
\begin{cases}
 \sigmaup(e_{i})={-}e_{p+q+1-i}, \,1{\leq}i{\leq}p\vee q{<}i{\leq}p{+}q, \\
 \sigmaup(e_{i})={-}e_{i}, \qquad p{<}i{\leq}q.
\end{cases} \\
\begin{cases}
 \epi(\alpha_{i})=\alpha_{p{+}q{-}i}, & 1{\leq}i{<}j{\leq}p\,\vee\, q{<}i{<}p{+}q,\\
 \epi(\alpha_{i})=\alpha_{i}, & p{<}i{\leq}q.
\end{cases}
\end{gather*}\par
The CR algebra $(\gs,\qt_{\phiup})$ is fundamental iff $\epi(\alpha_{i})\,{\notin}\,\phiup$ for all
$\alpha_{i}\,{\in}\,\phiup{\setminus}\Bz^{\sigmaup}_{\bullet}$ and finitely nondegenerate
iff $\phiup{\setminus}\Bz^{\sigmaup}_{\bullet}$ either contains at most one element, or,
for a sequence $1{\leq}i_{1}{<}\cdots{<}i_{h}{\leq}p$, equals either one of 
\begin{equation*}
\begin{aligned}
 & \{\alpha_{i_{2j-1}}\,{\mid}\,1{\leq}(2j{-}1){\leq}h\}\,{\cup}\, \{\epi(\alpha_{i_{2j}})\,{\mid}\,1{\leq}2j{\leq}h\},\\
& \{\epi(\alpha_{i_{2j-1}})\,{\mid}\,1{\leq}(2j{-}1){\leq}h\}\,{\cup} \, \{\alpha_{i_{2j}}\,{\mid}\,1{\leq}2j{\leq}h\},
\end{aligned}
\end{equation*}
and 
\begin{equation*} 
\begin{cases}
 |\phiup\cap\Bz^{\sigmaup}_{\bullet}|\,{\leq}\,2, & \text{if $\phiup\cap\{\alpha_{p},\alpha_{q+1}\}=\emptyset$},\\
  |\phiup\cap\Bz^{\sigmaup}_{\bullet}|\,{\leq}\,1, & \text{if $\phiup\cap\{\alpha_{p},\alpha_{q+1}\}\neq\emptyset$}.
\end{cases}
\end{equation*}
\par
The CR algebra $(\gs,\qt_{\phiup})$ has Levi order $2$ if and only if, moreover,
$\phiup\,{\subseteq}\,\Bz^{\sigmaup}_{\bullet}$. \par
Indeed, if $\emptyset\,{\neq}\,\phiup\,{\subseteq}\,\Bz^{\sigmaup}_{\bullet}$ and
$\alpha$ is the largest positive imaginary root, then $\alpha\,{\in}\,\Qc_{\;\phiup}\,{\cap}\,\sigmaup(\Qc^{-n}_{\;\phiup})$
and $\xiup_{\phiup}(\alpha)\,{\geq}\, \sup_{\alpha\in\Rad}|\xiup(\alpha)|$. 

 \par\medskip

\noindent
$\boxed{\textsc{$\gs$ of type\,\, BI,\,BII}}$ We have $\gs\,{\simeq}\,\so(p,2\ell{+}1{-}p)$, for $1{\leq}p{\leq}\ell$. 
\par\vspace{3pt}
When $p\,{=}\,\ell$, we get the split real form and 
all minimal parabolic CR algebras $(\gs,\qt_{\phiup})$ are totally real. \par
If $1{\leq}p{<}\ell$, 
$(\gs,\qt_{\phiup})$ is 
fundamental iff $\phiup\,{\subseteq}\,\Bz^{\sigmaup}_{\bullet}$ and 
finitely nondegenerate if, moreover, $|\phiup|\,{=}\,1$.  
\par Let us describe the root system, the basis of positive simple roots and the symmetry $\sigmaup$ by 
\begin{equation*} 
\begin{cases}
 \Rad\,{=}\,\{{\pm}e_{i}\,{\mid}\, 1{\leq}i{\leq}\ell\}\,{\cup}\,\{{\pm}e_{i}{\pm}e_{j}\,{\mid}\,1{\leq}i{<}j{\leq}\ell\},\\
 \Bz\,{=}\,\{\alpha_{i}{=}e_{i}{-}e_{i+1}\,{\mid}\,1{\leq}i{\leq}\ell{-}1\}\,{\cup}\,\{\alpha_{\ell}{=}e_{\ell}\},
\end{cases}\;\; 
\begin{cases}
 \sigmaup(e_{i})\,{=}\,e_{i}, & 1{\leq}i{\leq}p,\\
 \sigmaup(e_{i})\,{=}\,{-}e_{i}, & p{+}1{\leq}i{\leq}\ell.
\end{cases}
\end{equation*}
If $\phiup\,{=}\,\{\alpha_{p+1}\}$, then 
\begin{equation*} 
\begin{cases}
 \Qc^{n}_{\;\{\alpha_{p+1}\}}\cap\Rad^{\sigmaup}_{\bullet}\,{=}\,\{e_{p+1}\}\,{\cup}\,
 \{e_{p+1}{\pm}e_{i}\,{\mid}\, p{+}2{\leq}i{\leq}\ell\},\\
 \beta\,{=}\,({-}e_{1}{-}e_{p{+}1})=\sigmaup(e_{p+1}{-}e_{1})\,{\in}\,\sigmaup(\Qc^{r}_{\;\{\alpha_{p+1}\}})\cap
 \Qc^{-n}_{\;\{\alpha_{p+1}\}},\\
 e_{p+1}{+}\beta={-}e_{1}\,\in\,\Qc^{-n}_{\;\{\alpha_{p+1}\}}\cap\sigmaup(\Qc^{-n}_{\;\{\alpha_{p+1}\}}),\\
 (e_{p+1}{\pm}e_{i}){+}\beta={-}e_{1}{\pm}e_{i}\,\in\,\Qc^{-n}_{\;\{\alpha_{p+1}\}}\cap\sigmaup(\Qc^{-n}_{\;\{\alpha_{p+1}\}}),
 \;\forall p{+}2{\leq}i{\leq}\ell,
\end{cases}
\end{equation*}
shows that $(\gs,\qt_{\{\alpha_{p+1}\}})$ has Levi order~$1$.
\par\smallskip
If $\phiup\,{=}\,\{\alpha_{q}\}$ with $p{+}2\,{\leq}\,q\,{\leq}\,\ell$, then 
$e_{p+1}{+}e_{p+2}\,{\in}\,\Qc^{n}_{\;\phiup}{\cap}\Rad^{\sigmaup}_{\;\bullet}$
and $$2\,{=}\,\xiup_{\{\alpha_{q}\}}(e_{p+1}{+}e_{p+2})\,{\geq}\,{\sup}_{\beta\in\Rad}|\xiup_{\{\alpha_{q}\}}(\beta)|$$
implies that $(\gs,\qt_{\{\alpha_{q}\}})$ has Levi order~$2$.

\par \smallskip
\noindent
$\boxed{\textsc{$\gs$ of type\,\, CII}}$ We have $\gs\,{\simeq}\,\spt_{p,2\ell-p}$ for some integer $1{\leq}p{<}\ell$. With 
\begin{align*} 
&\begin{cases}
 \Rad\,{+}\,\{{\pm}2e_{i}\,{\mid}\,1{\leq}i{\leq}\ell\}\,{\cup}\,
 \{{\pm}e_{i}{\pm}e_{j}\,{\mid}\,1{\leq}i{<}j{\leq}\ell\},\\
 \Bz\,{=}\,\{\alpha_{i}{=}e_{i}{-}e_{{i+1}}\,{\mid}\,1{\leq}i{\leq}\ell{-}1\}\,{\cup}\,
 \{\alpha_{\ell}{=}2e_{\ell}\},
\end{cases} \\
&
\begin{cases}
 \sigmaup(e_{2i-1})\,{=}\,e_{2i},\; \sigmaup(e_{2i})\,{=}\,e_{2i-1}, & 1{\leq}i{\leq}p,\\
 \sigmaup(e_{i})\,{=}\,{-}e_{i}, & 2p{+}1{\leq}i{\leq}\ell,
\end{cases}
\end{align*}
we have
\begin{equation*}
 \Bz_{\bullet}^{\sigmaup}
 =\{\alpha_{2h-1}\mid 1{\leq}h{\leq}p\}\cup\{\alpha_{h}\mid 2p{+}1{\leq}h{\leq}\ell\}.
\end{equation*}
Fundamental $(\gs,\qt_{\phiup})$ have $\phiup\,{\subseteq}\,\Bz_{\bullet}^{\sigmaup}$; those which are
finitely nondegenerate have $|\{\alpha_{h}\,{\in}\,\phiup\,{\mid}\,2p{<}h{\leq}\ell\}|\,{\leq}\,1$. 
Hence, when $|\phiup|\,{\geq}\,2$ and $(\gs,\qt_{\phiup})$ is finitely nondegenerate, 
each root of $\phiup$ belongs to a different connected component of $\Rad^{\sigmaup}_{\;\bullet}$.
By Theorem\,\ref{t5.10} this implies that $(\gs,\qt_{\phiup})$ has Levi order $1$.\par
We are left  to compute Levi orders 
when $|\phiup|\,{=}\,1$. 
\par\noindent
1) \; Assume that $\phiup\,{=}\,\{\alpha_{2q-1}\}$, with $1{\leq}q{\leq}p$. \par
Then $\Qc^{n}_{\;\phiup}\,{\cap}\,\Rad_{\;\bullet}^{\sigmaup}\,{=}\,\{\alpha_{2q-{1}}\}$
and $\beta\,{=}\,{-}2e_{q-1}\,{=}\,\sigmaup({-}e_{2q})\,{\in}\,\sigmaup(\Qc^{r}_{\;\{\alpha_{2q-1}\}}\cap
\Qc_{\;\{\alpha_{2q-1}\}}^{-n})$ and
$\alpha_{2q-1}{+}\beta\,{=}\,{-}e_{2q-1}{-}e_{2q}
\,{\in}\,\Qc^{-n}_{\;\{\alpha_{2q-1}\}}\cap\sigmaup(\Qc^{-n}_{\;\{\alpha_{2q-1}\}})$
shows that also in this case the Levi order is $1$.
 \par\noindent
 2)\; Assume that $\phiup\,{=}\,\{\alpha_{q}\}$ with $2p{+}1{\leq}q{\leq}\ell$.
 Then $2e_{q}\,{\in}\,\Qc_{\;\{\alpha_{q}\}}^{n}\,{\cap}\,\Rad^{\sigmaup}_{\;\bullet}$
 and 
\begin{equation*}
 2=\xiup_{\{\alpha_{q}\}}(2e_{q})\geq{\sup}_{\beta\in\Rad}|\xiup_{\{\alpha_{q}\}}(\beta)|
\end{equation*}
shows that the Levi order of $(\gs,\qt_{\{\alpha_{q}\}})$ is $2$.

\par \medskip
\noindent
$\boxed{\textsc{$\gs$ of type\,\, DI, DII}}$ We have $\gs\,{\simeq}\,\so_{p,2\ell-p}$, with $1{\leq}p{\leq}\ell$.
\par \vspace{3pt}
For $p\,{=}\,\ell$ we obtain the split real form and $(\gs,\qt_{\phiup})$ is totally real for every $\phiup\,{\subseteq}\,
\Bz$. For $p\,{=}\,\ell{-}1$ we obtain the quasi-split real form. There are no imaginary roots and
$(\gs,\qt_{\phiup})$ is fundamental if and only if $\phiup$ equals either $\{\alpha_{\ell-1}\}$ or
$\{\alpha_{\ell}\}$. In both cases it is also finitely nondegenerate and of Levi order $1$.\par 
If $1{\leq}p{\leq}\ell{-}2,$ then $(\gs,\qt_{\phiup})$ is fundamental iff $\phiup\,{\subseteq}\,
\Bz^{\sigmaup}_{\bullet}\,{=}\,\{\alpha_{p+1},\hdots,\alpha_{\ell}\}$ and finitely nondegenerate 
in the following cases 
\begin{enumerate}
 \item $p\,{\geq}\,2$ and $|\phiup|=1$ or $\phiup\,{=}\,\{\alpha_{\ell-1},\alpha_{\ell}\}$;
 \item $p\,{=}\,1$ and either $\phiup\,{=}\,\{\alpha_{q}\}$ with $2{\leq}q{\leq}\ell{-}2$, or
 $\phiup\,{=}\,\{\alpha_{\ell-1},\alpha_{\ell}\}$.
\end{enumerate}

 \par
To discuss the different cases, we recall that, for an orthonormal basis $e_{1},\hdots,e_{\ell}$ of $\R^{\ell},$
we can set 
\begin{equation*}
\begin{cases}
 \Rad\,{=}\,\{{\pm}e_{i}{\pm}e_{j}\,{\mid}\,1\,{\leq}\,i\,{<}\,j\,{\leq}\,\ell\},\\
 \Bz\,{=}\,\{\alpha_{i}\,{=}\,e_{i}{-}e_{i+1}\,{\mid}\,1{\leq}i{\leq}i{-}1\}\cup\{\alpha_{\ell}\,{=}\,e_{\ell-1}{+}e_{\ell}\},
\end{cases}
 \end{equation*}
while $\sigmaup$ is the symmetry
\begin{equation*}
 \sigmaup(e_{i})= 
\begin{cases}
 \; e_{i}, & 1{\leq}i{\leq}p,\\
 {-}e_{i}, & p{<}i{\leq}\ell.
\end{cases}
\end{equation*}

\par
 We consider first the cases where $p{\geq}1$ and $\phiup\,{=}\,\phiup_{q}\,{=}\,\{\alpha_{q}\}$ with
$p{<}q{\leq}\ell{-}2$ or $\phiup\,{=}\,\phiup_{\ell-1}\,{=}\,\{e_{\ell-1},e_{\ell}\}$. Then  
$\Qc_{\;\phiup_{q}}\,{=}\,\{\alpha\,{\in}\,\Rad\,{\mid}\,\xiup_{q}(\alpha){\geq}0\}$ with 
\begin{equation*}
 \xiup_{q}(e_{i})=
\begin{cases}
 1, & 1{\leq}i{\leq}q,\\
 0, & q{<}i{\leq}\ell. 
\end{cases}
\end{equation*}
We have 
\begin{equation*} 
\begin{cases}
 \Qc^{n}_{\;\phiup_{p+1}}=\{e_{p+1}{\pm}e_{i}\mid p{+}2{\leq}i{\leq}\ell\},\\
 \sigmaup(e_{p+1}{-}e_{1})={-}e_{p+1}-e_{1}\,{\in}\,\sigmaup(\Qc^{r}_{\;\phiup})\cap\Qc^{-n}_{\;\phiup},\\
(e_{p+1}{\pm}e_{i}){+}({-}e_{p+1}{-}e_{1})={-}e_{1}{\pm}e_{i}\in\Qc^{-n}_{\;\phiup}\cap\sigmaup(\Qc^{-n}_{\;\phiup})
\end{cases}
\end{equation*}
and hence $(\gs,\qt_{\{\alpha_{p+1}\}})$ has Levi order $1$.\par
If $p{+}1\,{<}\,q\,{\leq}\,\ell{-}1$
$$\alpha\,{=}\,\alpha_{p+1}
{+}\alpha_{\ell-1}{+}\alpha_{\ell}{+}2{\sum}_{i=p+2}^{\ell{-}2}\alpha_{i}\,
{\in}\,\Qc^{n}_{\;\phiup}\,{\cap}\,\Rad^{\sigmaup}_{\;\bullet}$$
 and 
$\xiup_{\phiup}(\alpha)\,{\geq}\,{\sup}_{\beta\in\Rad}|\xiup_{\phiup}(\beta)|$ 
shows that $(\gs,\qt_{\phiup})$ has Levi order $2$.\par\smallskip
By the symmetry of the Satake diagram, the cases of $\phiup$ equal to $\{\alpha_{\ell-1}\}$ or
$\{\alpha_{\ell}\}$ have the same Levi order. Take $\phiup\,{=}\,\{\alpha_{\ell}\}$. Then 
\begin{equation*}
 \xiup_{\{\alpha_{\ell}\}}(e_{i})\,{=}\,\tfrac{1}{2},\;\;\forall 1{\leq}i{\leq}\ell
\end{equation*}
and $1\,{=}\,\xiup(\alpha_{\ell}){\geq}\,\sup_{\beta\in\Rad}|\xiup_{\{\alpha_{\ell}\}}(\beta)|$
shows that also in this case the Levi order is~$2$.

\par \medskip
\noindent
$\boxed{\textsc{$\gs$ of type\,\, DIIIa}}$ We have $\gs\,{\simeq}\,\su^{*}_{2p}(\Hb)$, for an integer $p{\geq}2$.
In all these cases it is finitely nondegenerate and has Levi order $1$ when $|\phiup|\,{\geq}2$.
\par
\vspace{3pt}\noindent
The conjugation on the root system is 
\begin{equation*}
\begin{cases}
 \sigmaup(e_{2i-1})=e_{2i}, \\ 
 \sigmaup(e_{2i})=e_{2i-1},  
\end{cases}1{\leq}i{\leq}p.
\end{equation*}
The CR algebra $(\gs,\qt_{\phiup})$ is fundamental if and only if $\Phi{\subseteq}\Bz^{\sigmaup}_{\;\bullet}
{=}\{\alpha_{2i-1}{\mid} 1{\leq}i{\leq}p\}$ and in all these cases is also finitely nondegnerate. 
By Theorem\,\ref{t5.10} it has Levi order $1$ when $|\phiup|\,{\geq}\,2$. 
\par
Let us consider the case where $\phiup\,{=}\,\{\alpha_{2q-1}\}$ for some $1{\leq}q{\leq}p.$ 
\par
We distinguish two cases.
\par\noindent
1) 
If $\phiup\,{=}\,\{\alpha_{2q-1}\}$ with $1{\leq}q{<}p,$ then the 
roots in $\sigmaup(\Qc_{\;\phiup}){\cap}\Qc_{\;\phiup}^{-n}$ which are not orthogonal to $\alpha_{2q-1}$
are either of the form ${-}e_{2q-1}{\pm}e_{h},$ with $h{>}2q$, or of the form $e_{2q}{-}e_{h}$ with $h{\leq}2q{-}2$.
We have 
\begin{equation*}\begin{cases}
 \alpha_{2q-1}{+}({-}e_{2q-1}{\pm}e_{h})=e_{2q}{\pm}e_{h}\notin \Qc^{-n}_{\;\phiup}\cap\sigmaup(\Qc_{\;\phiup}^{-n})
 & \text{for $h{>}2q$},\\
  \alpha_{2q-1}{+}(e_{2q}{-}e_{h})=e_{2q-1}{-}e_{h}\notin \Qc^{-n}_{\;\phiup}\cap\sigmaup(\Qc_{\;\phiup}^{-n})
 & \text{for $1{\leq}h{\leq}2q{-}2$}.
 \end{cases}
\end{equation*}
This shows that $k_{\phiup}^{\sigmaup}(\alpha_{2q-1})\,{\neq}\,1$ and therefore
$(\gs,\qt_{\phiup})$ has Levi order $2$. \par \noindent
2)
Consider finally the case $\phiup\,{=}\,\{\alpha_{2p-1}\}$. The roots 
 in $\sigmaup(\Qc_{\;\phiup}){\cap}\Qc_{\;\phiup}^{-n}$ which can be added to $\alpha_{2p-1}$
are of the form $e_{2p}{-}e_{h}$ with $h{\leq}2p{-}2$. Since 
\begin{equation*}
  \alpha_{2p-1}{+}(e_{2p}{-}e_{h})=e_{2p-1}{-}e_{h}\notin \Qc^{-n}_{\;\phiup}\cap\sigmaup(\Qc_{\;\phiup}^{-n})
 \;\;\text{for $1{\leq}h{\leq}2p{-}2$},
\end{equation*}
we have $k_{\phiup}^{\sigmaup}(\alpha_{2p-1})\,{\neq}\,1$ and therefore
$(\gs,\qt_{\phiup})$ has Levi order $2$.

\par \smallskip
\noindent
$\boxed{\textsc{$\gs$ of type\,\, DIIIb}}$ We have $\gs\,{\simeq}\,\su^{*}_{2p+1}(\Hb)$, for an integer $p{\geq}2$.
The conjugation on the root system is 
\begin{equation*}
\begin{cases}
 \sigmaup(e_{2i-1})=e_{2i}, & 1{\leq}i{\leq}p,\\ 
 \sigmaup(e_{2i})=e_{2i-1},  & 1{\leq}i{\leq}p,\\
 \sigmaup(e_{2p+1})={-}e_{2p+1}.
\end{cases}
\end{equation*}\par
The minimal parabolic fundamental CR algebras $(\gs,\qt_{\phiup})$
have\par
\centerline{$\phiup\,{\subseteq}\,\{\alpha_{2i-1}\,{\mid}\,1{\leq}i{\leq}p\}\,{\cup}\,\{\alpha_{2p},\alpha_{2p+1}\}$,\;
with\; $|\phiup\,{\cap}\,\{\alpha_{2p},\alpha_{2p+1}\}|\,{\leq}1$}
\par\noindent
and are all finitely nondegenerate. \par 
By repeating the previous arguments, it turns out that $(\gs,\qt_{\phiup})$ is 
\begin{itemize}
\item $1$-nondegenerate when
$\phiup$ contains $\alpha_{2p}$ or $\alpha_{2p+1}$, or $|\phiup|\,{\geq}\,2$;
\item $2$-nondegenerate when $\phiup\,{=}\,\{\alpha_{2q-1}\}$ for $1{\leq}q{\leq}p$.
\end{itemize}
\par\medskip
To treat in the following real forms of the exceptional Lie algebras of type $\textsc{E}$,
we shall use suitable root sytem, in which we will use elements of the following form.
After fixing 
an orthonormal basis $e_{1},\hdots,e_{8}$ of $\R^{8}$,
we set, for every permutation  $(i_{1},\hdots,i_{8})$ of $\{1,\hdots,8\}$,
\begin{equation} 
\begin{cases}
 \zetaup_{\emptyset}={-}\tfrac{1}{2}\left(e_{1}+\cdots+e_{8}\right),\\
 \zetaup_{i_{1}}=\tfrac{1}{2}\left(e_{i_{1}}{-}e_{i_{2}}{-}\cdots{-}e_{i_{8}}\right),\\
 \zetaup_{i_{1},\hdots,i_{h}}=\tfrac{1}{2}\left(e_{i_{1}}{+}\cdots{+}e_{i_{h}}{-}e_{i_{h+1}}{-}\cdots{-}e_{i_{8}}\right),
 & 2{\leq}h{\leq}6,\\
  \zetaup_{i_{1},\hdots,i_{7}}=\tfrac{1}{2}\left(e_{i_{1}}{+}\cdots{+}e_{i_{7}}{-}e_{i_{8}}\right),\\
 \zetaup_{i_{1},\hdots,i_{8}}=\tfrac{1}{2}\left(e_{1}+\cdots+e_{8}\right).
\end{cases}
\end{equation}

\par\smallskip\noindent
$\boxed{\textsc{$\gs$ of type\,\,EII}}$ This is a quasi-split real form, which, in  the representation 
\begin{equation} \label{e6.2}
\begin{cases}
 \Rad=\{{\pm}(e_{i}{-}e_{j}){\mid}{1{\leq}i{<}j{\leq}6}\}\,{\cup}\,\{{\pm}(e_{7}{-}e_{8})\}\,{\cup}\,
 \{{\pm}\zetaup_{i,j,k,7}{\mid}
 1{\leq}i{<}j{<}k{\leq}6\},\\
 \Bz=\{\alpha_{i}{=}e_{i}{-}e_{i+1}\,{\mid}\,1{\leq}i{\leq}5\}\cup\{\alpha_{6}=\zetaup_{4,5,6,7}\},
\end{cases}
\end{equation} 
is described by the symmetry
\begin{equation*}
 \epi(e_{i})=
\begin{cases}
{-}e_{7-i}, & 1{\leq}i{\leq}6,\\
{-}e_{9-i}, & i=7,8.
\end{cases}
\end{equation*}
The $\phiup$'s for which $(\gs,\qt_{\phiup})$ is fundamental are those with 
$\phiup\,{\cap}\,\epi(\phiup)\,{=}\,\emptyset$. 
Those for which it is also finitely nondegenerate are those with $|\phiup|\,{=}\,1$
and $\phiup\,\subset\,\{\alpha_{1},\alpha_{2},\alpha_{4},\alpha_{5}\}$
and also $\{\alpha_{1},\alpha_{4}\}$, $\{\alpha_{2},\alpha_{5}\}$.
In all these cases the Levi order is $1$, since $\Rad$ does not contain imaginary roots.
\par \smallskip
\noindent
$\boxed{\textsc{$\gs$ of type\,\, EIII}}$ This is a root system of type $\textsc{E}$, which, in
the setting \eqref{e6.2},
is defined by the symmetry
\begin{equation*} 
\begin{cases}
 \sigmaup(e_{1}){=}{-}e_{6},\; \sigmaup(e_{6}){=}{-}e_{1},\\
 \sigmaup(e_{7}){=}{-}e_{8},\; \sigmaup(e_{8}){=}{-}e_{7},\\
 \sigmaup(e_{i}){=}{-}e_{i}, \quad 2{\leq}i{\leq}5.
\end{cases}
\end{equation*}
The $\phiup$'s for which $(\gs,\qt_{\phiup})$ is fundamental are those 
 containing  neither $\alpha_{6}$ nor the couple $\{\alpha_{1},\alpha_{5}\}$ and 
they are all finitely nondegenerate, with the exceptions: 
\begin{equation*} \{\alpha_{1},\alpha_{2},\alpha_{3}\},\;\{\alpha_{3},\alpha_{4},\alpha_{5}\},\;
 \{\alpha_{1},\alpha_{2},\alpha_{3},\alpha_{4}\}\;\;\text{and}\;\;  \{\alpha_{2},\alpha_{3},\alpha_{4},\alpha_{5}\}.
\end{equation*}
Those containing either $\alpha_{1}$ or $\alpha_{5}$ are $1$-nondegenerate, because $\phiup$
contains a nonimaginary root. \par
Let us consider the cases where $\phiup\,{\subseteq}\,\Bz_{\bullet}^{\sigmaup}$.\par 
If either
$\phiup\,{=}\,\{\alpha_{2},\alpha_{3},\alpha_{4}\}\,{=}\,\Bz_{\bullet}^{\sigmaup}$, or $\phiup\,{=}\,\{\alpha_{2},\alpha_{4}\},$ 
 by Theorem\,\ref{t5.10} the single root in $\Qc^{n}_{\;\phiup}\,{\cap}\,\Rad^{\sigmaup}_{\;\phiup}$ which
 may have Levi order $2$ is $\alpha\,{=}\,\alpha_{2}{+}\alpha_{3}{+}\alpha_{4}\,{=}\,e_{2}{-}e_{5}$.\par
Since $(e_{6}{-}e_{2})\,{=}\,\sigmaup(e_{2}{-}e_{1})\,{\in}\,\sigmaup(\Qc^{r}_{\;\phiup})\,{\cap}\,\Qc_{\;\phiup}^{-n}$
 and 
\begin{equation*}
 (e_{2}{-}e_{5})+(e_{6}{-}e_{2})=e_{6}{-}e_{5}\in\Qc^{-n}_{\;\phiup}\cap\sigmaup(\Qc^{-n}_{\;\phiup}),
\end{equation*}
 we obtain $\ktt^{\sigmaup}_{\phiup}(\alpha)\,{=}\,1$, showing that $(\gs,\qt_{\{\alpha_{2},\alpha_{3},\alpha_{4}\}})$
 and $(\gs,\qt_{\{\alpha_{2},\alpha_{4}\}})$
 have Levi order $1$. 
 
\par
The cases with $\phiup\,{\subset}\,\Bz_{\bullet}^{\sigmaup}$ and $|\phiup|=2$ are
$1$-nondegenerate: the argument is the one used in the discussion of $\textsc{AIII}$.\par
Let $\phiup\,{=}\,\{\alpha_{2}\}.$ Then $\Qc_{\;\phiup}\,{=}\,\{\alpha\,{\in}\,\Rad\,{\mid}\,\xiup(\alpha)\geq{0}\}$ with 
\begin{equation*} 
\begin{cases}
 \xiup_{\phiup}(e_{1})=1,\; \xiup_{\phiup}(e_{2})=1,\;\xiup(e_{3})=0,\, \xiup(e_{4})=0,\\
 \xiup_{\phiup}(e_{5})=0,\; \xiup_{\phiup}(e_{6})=0,\;\xiup(e_{7})=2,\, \xiup(e_{8})=0.
\end{cases}
\end{equation*}
We have $\Qc_{\;\phiup}^{n}\,{\cap}\,\Rad_{\;\bullet}^{\sigmaup},{=}\,\{e_{2}{-}e_{i}\,{\mid}\,3{\leq}i{\leq}5\}$ and 
\begin{equation*} 
\begin{cases}
 (e_{2}{-}e_{3})+\zetaup_{3,4,5,8}=\zetaup_{2,4,5,8}\in\Qc_{\;\phiup}^{-n}\,{\cap}\,\sigmaup(\Qc_{\;\phiup}^{-n}),\;\;
 \text{with $\zetaup_{3,4,5,8}\,{\in}\,\sigmaup(\Qc_{\;\phiup})\,{\cap}\,\Qc_{\;\phiup}^{-n}$},\\
  (e_{2}{-}e_{4})+\zetaup_{3,4,5,8}=\zetaup_{2,3,5,8}\in\Qc_{\;\phiup}^{-n}\,{\cap}\,\sigmaup(\Qc_{\;\phiup}^{-n}),\;\;
 \text{with $\zetaup_{3,4,5,8}\,{\in}\,\sigmaup(\Qc_{\;\phiup})\,{\cap}\,\Qc_{\;\phiup}^{-n}$},\\
  (e_{2}{-}e_{5})+\zetaup_{3,4,5,8}=\zetaup_{2,3,4,8}\in\Qc_{\;\phiup}^{-n}\,{\cap}\,\sigmaup(\Qc_{\;\phiup}^{-n}),\;\;
 \text{with $\zetaup_{3,4,5,8}\,{\in}\,\sigmaup(\Qc_{\;\phiup})\,{\cap}\,\Qc_{\;\phiup}^{-n}$},
\end{cases}
\end{equation*}
shows that $(\gs,\qt_{\{\alpha_{2}\}})$  has Levi order $1$. Symmetrically, also 
$(\gs,\qt_{\{\alpha_{4}\}})$   has Levi order $1$. \par
Let us consider $\phiup\,{=}\,\{\alpha_{3}\}$. 
Then $\Qc_{\;\phiup}\,{=}\,\{\alpha\,{\in}\,\Rad\,{\mid}\,\xiup(\alpha)\geq{0}\}$ with 
\begin{equation*} 
\begin{cases}
 \xiup_{\phiup}(e_{1})=1,\; \xiup_{\phiup}(e_{2})=1,\;\xiup(e_{3})=1,\, \xiup(e_{4})=0,\\
 \xiup_{\phiup}(e_{5})=0,\; \xiup_{\phiup}(e_{6})=0,\;\xiup(e_{7})=3,\, \xiup(e_{8})=0.
\end{cases}
\end{equation*}
We have $\Qc_{\;\phiup}^{n}\,{\cap}\,\Rad_{\;\bullet}^{\sigmaup}\,{=}\,\{e_{2}{-}e_{i}\,{\mid}\,4{\leq}i{\leq}5\}\,{\cup}
\,\{e_{3}{-}e_{i}\,{\mid}\,4{\leq}i{\leq}5\}$ and
\begin{equation*} 
\begin{cases}
 (e_{2}{-}e_{4})+\zetaup_{1,4,5,8}=\zetaup_{1,2,5,8}\in\Qc_{\;\phiup}^{-n}\,{\cap}\,\sigmaup(\Qc_{\;\phiup}^{-n}),\;\;
 \text{with $\zetaup_{1,4,5,8}\,{\in}\,\sigmaup(\Qc_{\;\phiup})\,{\cap}\,\Qc_{\;\phiup}^{-n}$},\\
  (e_{2}{-}e_{5})+\zetaup_{1,4,5,8}=\zetaup_{1,2,4,8}\in\Qc_{\;\phiup}^{-n}\,{\cap}\,\sigmaup(\Qc_{\;\phiup}^{-n}),\;\;
 \text{with $\zetaup_{1,4,5,8}\,{\in}\,\sigmaup(\Qc_{\;\phiup})\,{\cap}\,\Qc_{\;\phiup}^{-n}$},
 \\
  (e_{3}{-}e_{4})+\zetaup_{1,4,5,8}=\zetaup_{1,3,5,8}\in\Qc_{\;\phiup}^{-n}\,{\cap}\,\sigmaup(\Qc_{\;\phiup}^{-n}),\;\;
 \text{with $\zetaup_{1,4,5,8}\,{\in}\,\sigmaup(\Qc_{\;\phiup})\,{\cap}\,\Qc_{\;\phiup}^{-n}$}\\
  (e_{3}{-}e_{5})+\zetaup_{1,4,5,8}=\zetaup_{1,3,4,8}\in\Qc_{\;\phiup}^{-n}\,{\cap}\,\sigmaup(\Qc_{\;\phiup}^{-n}),\;\;
 \text{with $\zetaup_{1,4,5,8}\,{\in}\,\sigmaup(\Qc_{\;\phiup})\,{\cap}\,\Qc_{\;\phiup}^{-n}$}.
\end{cases}
\end{equation*}
shows that also $(\gs,\qt_{\{\alpha_{3}\}})$  has Levi order $1$.
\par \medskip
\noindent
$\boxed{\textsc{$\gs$ of type\,\, \textsc{E\,IV}}}$ To discuss the real form \textsc{E\,IV},
we found convenient to describe 
the root system  and a basis of simple roots  by 
(the upper sign is for the root in $\Rad^{+}$)
\begin{equation} 
\begin{cases}
 \Rad=\{{\pm}(e_{i}{\pm}e_{j}),\,{\pm}\zetaup_{i,j}{\mid}{1{\leq}i{<}j{\leq}5}\}\,{\cup}\,\{{\pm}\zetaup_{\emptyset}\}\,{\cup}\,
 \{{\mp}\zetaup_{i,6,7,8}{\mid}
 1{\leq}i{\leq}5\},\\
 \Bz=\{\alpha_{i}{=}e_{i}{-}e_{i+1}\,{\mid}\,1{\leq}i{\leq}4\}\cup\{\alpha_{5}{=}e_{4}{+}e_{5}\}\cup
 \{\alpha_{6}=\zetaup_{\emptyset}\}.
\end{cases}
\end{equation}
The symmetry  yielding the form $\textsc{EIV}$ is defined by 
\begin{equation*} 
\begin{cases}
 \sigmaup(e_{i}){=}e_{i},\; i=1,6,7,8,\\
 \sigmaup(e_{i}){=}{-}e_{i},\; 1=2,3,4,5.
\end{cases}
\end{equation*}
The corresponding Satake diagram is \par\medskip
\begin{equation*}
\xymatrix @M=0pt @R=4pt @!C=20pt{
  &\alpha_{1}&\alpha_{2}&\alpha_{3}&\alpha_{4}\\
  &\medcirc\ar@{-}[r]
  &\medbullet\ar@{-}[r]
  &\medbullet\ar@{-}[r]\ar@{-}[dddd]
  &\medbullet \\
  \\
  \\
  \\
  &&&\medbullet\ar@{-}[dddd] &\!\!\!\!\!\!\!\!\!\!\!\!\!\!\!\!\!\!\!\!\!\!\!\!\!\!\!\!\alpha_{5}
  \\
  \\
  \\
  \\
 & &&\medcirc&\!\!\!\!\!\!\!\!\!\!\!\!\!\!\!\!\!\!\!\!\!\!\!\!\!\!\!\!\alpha_{6}
  }
\end{equation*}
 
\par\bigskip 
The fundamental $(\gs,\qt_{\phiup})$'s have 
$\phiup\,{\subseteq}\,\Bz_{\bullet}^{\sigmaup}$ and they are finitely nondegenerate iff $|\phiup|{\leq}2$
and $\phiup\,{\neq}\,\{\alpha_{3},\alpha_{4}\}$.
\par
By the argument used in case $\textsc{DII}$, $(\gs,\qt_{\phiup})$  has Levi order $1$ when 
$\phiup$ equals either $\{\alpha_{2}\}$ or $\{\alpha_{5}\}$.  \par
Let $\phiup\,{=}\,\{\alpha_{3}\}$. We have 
\begin{equation*}
 \xiup_{\{\alpha_{3}\}}(e_{i})=
\begin{cases}
 \;1, & i=1,2,3,\\
 \;0, & i=4,5,\\
 {-}1, & i=6,7,8.
\end{cases}
\end{equation*}
The root $\beta\,{=}\,(e_{2}{+}e_{3})$ belongs to $\Qc^{n}_{\;\phiup}\,{\cap}\,\Rad_{\;\bullet}^{\sigmaup}$ 
and $\xiup_{\{\alpha_{3}\}}(\beta)\,{=}2.$ Since only the roots $\zetaup_{4,6,7,8}$ and
$\zetaup_{5,6,7,8}$ have a $\xiup_{\{\alpha_{3}\}}$ less than $({-}2)$, and they both belong to
$\Qc_{\;}^{-n}\,{\cap}\,\sigmaup(\Qc^{-n}_{\phiup})$, we obtain $k_{\{\alpha_{3}\}}^{\sigmaup}(\beta)\,{=}\,2$.
Hence $(\gs,\qt_{\{\alpha_{3}\}})$ has Levi order $2$. \par\smallskip
Let $\phiup\,{=}\,\{\alpha_{4}\}$. We have 
\begin{equation*}
 \xiup_{\{\alpha_{4}\}}(e_{i})=
\begin{cases}
 \;\;\tfrac{1}{2}, & i=1,2,3,4\\
 {-}\tfrac{1}{2}, & i=5,6,7,8
\end{cases}
\end{equation*}
and $\Qc^{n}_{\;\phiup}\,{\cap}\,\Rad_{\;\bullet}^{\sigmaup}\,{=}\,\{e_{i}{-}e_{5}\,{\mid}\,2{\leq}i{\leq}4\}
\,{\cup}\,\{e_{i}{+}e_{j}\,{\mid}\,2{\leq}i{<}j{\leq}4\}$. Taking into account that 
$({-}e_{i}{-}e_{j})$, for $2{\leq}i{<}j{\leq}4$, and
$\zetaup_{1,6,7,8}\,{=}\,\sigmaup({-}\zetaup_{\emptyset})$ belong to $\sigmaup(\Qc^{r}_{\;\phiup})\,{\cap}\,
\Qc^{-n}_{\;\phiup}$, the equalities 
\begin{equation*}\begin{cases}
 (e_{i}{-}e_{5}){+}({-}e_{1}{-}\e_{i})=({-}e_{1}{-}\e_{5})\in\Qc^{-n}_{\;\phiup}\cap\sigmaup(\Qc^{-n}_{\;\phiup},
 & 2{\leq}i{\leq}4,\\
 (e_{i}{+}e_{j}){+}\zetaup_{1,6,7,8}=\zetaup_{1,i,j,6,7,8}\in\Qc^{-n}_{\;\phiup}\cap\sigmaup(\Qc^{-n}_{\;\phiup},
 & 2{\leq}i{<}j{\leq}4,\end{cases}
\end{equation*}
show that $(\gs,\qt_{\{\alpha_{4}\}})$  has Levi order $1$. 
\par\smallskip
Next we consider the subsets $\phiup$ with $|\phiup|\,{=}\,2$. \par
Let $\phiup\,{=}\,\phiup_{2,3}\,{=}\,\{\alpha_{2},\alpha_{3}\}$. 
We have $\Qc_{\;\phiup}\,{=}\,\{\alpha\,{\in}\,\Rad\,{\mid}\,\xiup_{\phiup_{2,3}}(\alpha){\geq}0\}$ with 
\begin{equation*}
 \xiup_{\phiup_{2,3}}= 
\begin{cases}
 \; 2, & i=1,2,\\
 \; 1, & i=3,\\
\; 0, & i=4,5,\\
 {-}\tfrac{5}{3}, & i=6,7,8.
\end{cases}
\end{equation*} 
Then 
\begin{equation*}
 \Qc^{n}_{\;\phiup_{2,3}}\cap\Rad^{\sigmaup}_{\;\phiup_{2,3}}=\{e_{i}{-}e_{j}\,{\mid}\, i=2,3,\; i{<}j{\leq}5\}
 \cup \{e_{i}{+}e_{j}\,{\mid} 2{\leq}{i}{\leq}3,\; i{\leq}j{\leq}5\}.
\end{equation*}
We note that $({-}e_{1}{-}e_{2})\,{=}\,\sigmaup(e_{2}{-}e_{1})$, ${-}\zeta_{2,3}\,{=}\,\sigmaup(\zetaup_{4,5})$
and $\zetaup_{1,6,7,8}{=}\,\sigmaup({-}\zetaup_{\emptyset})$ belong to $\sigmaup(\Qc^{r}_{\phiup_{2,3}})
\,{\cap}\,\Qc^{-n}_{\phiup_{2,3}}.$ Therefore 
\begin{equation*} 
\begin{cases}
 (e_{2}{-}e_{i})+({-}e_{1}{-}e_{2})={-}e_{1}{-}e_{i}\in\Qc_{\;\phiup_{2,3}}^{-n}\cap\sigmaup(\Qc_{\;\phiup_{2,3}}^{-n}),
 & i=3,4,5,\\
 (e_{3}{-}e_{i})+({-}\zetaup_{2,3})={-}\zetaup_{2,i}\in\Qc_{\;\phiup_{2,3}}^{-n}\cap\sigmaup(\Qc_{\;\phiup_{2,3}}^{-n})
 , & i=4,5,\\
 (e_{2}{+}e_{i})+\zetaup_{1,6,7,8}=\zetaup_{1,2,i,6,7,8}\in\Qc_{\;\phiup_{2,3}}^{-n}\cap\sigmaup(\Qc_{\;\phiup_{2,3}}^{-n}), 
 & i=3,4,5,\\
  (e_{3}{+}e_{i})+\zetaup_{1,6,7,8}=\zetaup_{1,3,i,6,7,8}\in\Qc_{\;\phiup_{2,3}}^{-n}\cap\sigmaup(\Qc_{\;\phiup_{2,3}}^{-n}), 
 & i=4,5,
\end{cases}
\end{equation*}
shows that $(\gs,\qt_{\phiup_{2,3}})$  has Levi order $1$. By the symmetry or the Satake diagram $\textsc{E\,IV}$,
also $(\gs,\qt_{\phiup_{3,5}})$, with $\phiup_{3,5}\,{=}\,\{\alpha_{3},\alpha_{5}\}$  has Levi order $1$.\par\smallskip
Let now $\phiup\,{=}\,\phiup_{2,4}\,{=}\,\{\alpha_{2},\alpha_{4}\}$. 
In this case we take 
\begin{equation*}
 \xiup_{\phiup_{2,4}}= 
\begin{cases}
 \; \tfrac{3}{2}, & i=1,2,\\
 \; \tfrac{1}{2}, & i=3,4,\\
-\tfrac{1}{2}, & i=5,\\
 {-}\tfrac{7}{6}, & i=6,7,8.
\end{cases}
\end{equation*} 
By Theorem\,\ref{t5.10} the elements of $\Qc_{\;\phiup}\cap\sigmaup(\Qc^{-n}_{\;\phiup})$ 
which can have Levi order $2$ are roots of $\Qc^{n}_{\;\phiup}\,{\cap}\,\Rad^{\sigmaup}_{\;\bullet}$
whose support contains $\{\alpha_{2},\alpha_{4}\}$. Thus we need to check the Levi order of the
three roots $e_{2}{+}e_{3},\, e_{2}{+}e_{4},\; e_{2}{-}e_{5}$. We have $\sigmaup(e_{2}{-}e_{1})\,{=}\,
({-}e_{1}{-}e_{2})\,{\in}\,\sigmaup(\Qc^{r}_{\;\phiup})\,{\cap}\,\Qc^{-n}_{\;\phiup}$ and then 
\begin{equation*} 
\begin{cases}
 (e_{2}{+}e_{3}){+}({-}e_{1}{-}e_{2})= e_{3}{-}e_{1}\in\Qc^{-n}_{\;\phiup}\cap\sigmaup(\Qc^{-n}_{\;\phiup}),\\
  (e_{2}{+}e_{4}){+}({-}e_{1}{-}e_{2})= e_{4}{-}e_{1}\in\Qc^{-n}_{\;\phiup}\cap\sigmaup(\Qc^{-n}_{\;\phiup}),\\
   (e_{2}{-}e_{5}){+}({-}e_{1}{-}e_{2})={-}e_{1}{-}e_{5}\in\Qc^{-n}_{\;\phiup}\cap\sigmaup(\Qc^{-n}_{\;\phiup}),\\
\end{cases}
\end{equation*}
shows that $(\gs,\qt_{\{\alpha_{2},\alpha_{4}\}})$ has Levi order $1$.
By the symmetry of the Satake diagram $\textsc{E\,IV}$,
also $(\gs,\qt_{\phiup_{4,5}})$, with $\phiup_{4,5}\,{=}\,\{\alpha_{4},\alpha_{5}\}$  has Levi order $1$.\par\smallskip
Let now $\phiup\,{=}\,\phiup_{2,5}\,{=}\,\{\alpha_{2},\alpha_{5}\}.$ We have 
\begin{equation*}
 \xiup_{\phiup_{2,5}}(e_{i})= 
\begin{cases}
 \;\,2, & 1=1,2,\\
 \;\,1,&  i=3,4,5,\\
 {-}\tfrac{7}{3}, & i=6,7,8,
\end{cases}
\end{equation*}
By Theorem\,\ref{t5.10} the elements of $\Qc_{\;\phiup}\cap\sigmaup(\Qc^{-n}_{\;\phiup})$ 
which can have Levi order $2$ are roots of $\Qc^{n}_{\;\phiup}\,{\cap}\,\Rad^{\sigmaup}_{\;\bullet}$
whose support contains $\{\alpha_{2},\alpha_{5}\}$. Thus we need to check the Levi order of the
three roots $e_{2}{+}e_{3},\, e_{2}{+}e_{4},\; e_{2}{+}e_{5}$. We have $\sigmaup(e_{2}{-}e_{1})\,{=}\,
({-}e_{1}{-}e_{2})\,{\in}\,\sigmaup(\Qc^{r}_{\;\phiup})\,{\cap}\,\Qc^{-n}_{\;\phiup}$ and then 
\begin{equation*} 
\begin{cases}
 (e_{2}{+}e_{3}){+}({-}e_{1}{-}e_{2})= e_{3}{-}e_{1}\in\Qc^{-n}_{\;\phiup}\cap\sigmaup(\Qc^{-n}_{\;\phiup}),\\
  (e_{2}{+}e_{4}){+}({-}e_{1}{-}e_{2})= e_{4}{-}e_{1}\in\Qc^{-n}_{\;\phiup}\cap\sigmaup(\Qc^{-n}_{\;\phiup}),\\
   (e_{2}{+}e_{5}){+}({-}e_{1}{-}e_{2})=e_{5}{-}e_{1}\in\Qc^{-n}_{\;\phiup}\cap\sigmaup(\Qc^{-n}_{\;\phiup}),\\
\end{cases}
\end{equation*}
shows that $(\gs,\qt_{\{\alpha_{2},\alpha_{5}\}})$ has Levi order $1$.

\par \medskip
\noindent
$\boxed{\textsc{$\gs$ of type\,\, EVI}}$ This is a real form of $\textsc{E}_{7}$. Let us take
the root system of type $\textsc{E}_{7}$ and the basis of simple roots consisting of  
(the upper sign is for the root in $\Rad^{+}$)
\begin{equation} 
\begin{cases}
 \Rad=\{{\pm}(e_{i}{\pm}e_{j}),{\pm}\zetaup_{i,j},{\mp}\zetaup_{i,j,7,8}{\mid}{1{\leq}i{<}j{\leq}6}\}
 {\cup}\{{\pm}(e_{7}{+}e_{8}),{\pm}\zetaup_{\emptyset},{\mp}\zetaup_{7,8}\}\\
 \Bz=\{\alpha_{i}{=}e_{i}{-}e_{i+1}\,{\mid}\,1{\leq}i{\leq}5\}\cup\{\alpha_{6}{=}e_{5}{+}e_{6}\}\cup
 \{\alpha_{7}=\zetaup_{\emptyset}\}.
\end{cases}
\end{equation}
The symmetry  yielding the form $\textsc{EVI}$ is defined by 
\begin{equation*} 
\begin{cases}
 \sigmaup(e_{2i-1}){=}e_{2i},\; 1{\leq}i{\leq}4,\\
 \sigmaup(e_{2i}){=}e_{2i-1},\; 1{\leq}i{\leq}4.
\end{cases}
\end{equation*}
The corresponding Satake diagram is \par\medskip
 \begin{equation*}
  \xymatrix @M=0pt @R=4pt @!C=20pt{
 \alpha_{1} &\alpha_{2}&\alpha_{3}&\alpha_{4}&\alpha_{5}\\
  \medbullet\ar@{-}[r]&\medcirc\ar@{-}[r]
  &\medbullet\ar@{-}[r]
  &\medcirc\ar@{-}[r]\ar@{-}[dddd]
  &\medbullet \\
  \\
  \\
  \\
  &&&\medcirc\ar@{-}[dddd] &\!\!\!\!\!\!\!\!\!\!\!\!\!\!\!\!\!\!\!\!\!\!\!\!\!\!\!\!\alpha_{6}
  \\
  \\
  \\
  \\
 & &&\medcirc&\!\!\!\!\!\!\!\!\!\!\!\!\!\!\!\!\!\!\!\!\!\!\!\!\!\!\!\!\alpha_{7}
  }
\end{equation*}
\par\medskip
The $\phiup$'s yielding fundamental minimal parabolic CR algebras $(\gs,\qt_{\phiup})$
are those with $\phiup\,{\subseteq}\,\Bz_{\bullet}^{\sigmaup}\,{=}\,\{\alpha_{1},\alpha_{3},\alpha_{5}\}$
and they are all finitely nondegenerate. By the discussion of the case $\textsc{AII}$ we know
that those with $|\phiup|=2,3$ are $1$-nondegenerate. Let us discuss the cases where $|\phiup|\,{=}\,1$.\par
If $\phiup\,{=}\,\phiup_{1}\,{=}\{\alpha_{1}\}$, we have $\Qc_{\;\phiup_{1}}\,{=}\,\{\alpha\,{\in}\,\Rad\,{\mid}\,
\xiup_{1}(\alpha)\,{\geq}\,0\}$ with 
\begin{equation*} \xiup_{1}(e_{i})=
\begin{cases}
\;1, & i=1,\\
\;0, & 2{\leq}i{\leq}6,\\
{-}\tfrac{1}{2}, & i=7,8.
\end{cases}
\end{equation*}
Since $|\xiup_{1}(\alpha)|{\leq}1$ for all roots $\alpha$ of $\Rad$ and $\xiup_{1}(\alpha_{1})\,{=}1$,
we get $k_{\;\phiup_{1}}^{\sigmaup}(\alpha_{1})\,{=}\,2$ and hence $(\gs,\qt_{\;\phiup_{1}})$ has Levi order $2$.
\par
\smallskip
If $\phiup\,{=}\,\phiup_{3}\,{=}\{\alpha_{3}\}$, we have $\Qc_{\;\phiup_{3}}\,{=}\,\{\alpha\,{\in}\,\Rad\,{\mid}\,
\xiup_{3}(\alpha)\,{\geq}\,0\}$ with 
\begin{equation*} \xiup_{3}(e_{i})=
\begin{cases}
\;1, & i=1,2,3\\
\;0, & i=4,5,6,\\
{-}\tfrac{3}{2}, & i=7,8.
\end{cases}
\end{equation*}
By the discussion of $\textsc{DIIIa}$ we know that there is no root $\beta$ of the form ${\pm}e_{i}{\pm}e_{j}$
such that $\alpha_{3}{+}\beta\,{\in}\,\Qc^{-n}_{\;\phiup_{3}}\,{\cap}\,\sigmaup(\Qc^{-n}_{\;\phiup_{3}})$.
The negative $\zetaup$-roots in $\Qc_{\;\phiup_{3}}^{r}$ are $({-}\zetaup_{\emptyset})$ and 
$({-}\zetaup_{i,j})$ with $4{\leq}i{<}j{\leq}6$. We have $\sigmaup({-}\zetaup_{\emptyset})\,{=}\,({-}\zetaup_{\emptyset})$,
$\sigmaup({-}\zetaup_{4,5})\,{=}\,({-}\,\zetaup_{3,6})$, 
$\sigmaup({-}\zetaup_{4,6})\,{=}\,({-}\,\zetaup_{3,5})$,
$\sigmaup({-}\zetaup_{5,6})\,{=}\,({-}\,\zetaup_{5,6})$. Thus the $\zetaup$-roots in 
$\sigmaup(\Qc^{r}_{\;\phiup_{3}})\,{\cap}\,\Qc^{-n}_{\;\phiup_{3}}$ are $({-}\zetaup_{3,5})$ and $({-}\zetaup_{{3,6}})$
and both $\alpha_{3}{+}({-}\zetaup_{3,5})\,{=}\,({-}\zetaup_{4,5})$ and $\alpha_{3}{+}({-}\zetaup_{3,6})\,{=}\,({-}\zetaup_{4,6})$
are in $\Qc^{r}_{\;\phiup_{3}}$. This shows that $k_{\;\phiup_{3}}^{\sigmaup}(\alpha_{3})\,{=}\,2$ and hence
$(\gs,\qt_{\;\phiup_{3}})$ has Levi order $2$. 
\par
\smallskip
\par
\smallskip
If $\phiup\,{=}\,\phiup_{5}\,{=}\{\alpha_{5}\}$, we have $\Qc_{\;\phiup_{5}}\,{=}\,\{\alpha\,{\in}\,\Rad\,{\mid}\,
\xiup_{5}(\alpha)\,{\geq}\,0\}$ with 
\begin{equation*} \xiup_{5}(e_{i})=
\begin{cases}
\;\tfrac{1}{2}, & i=1,2,3,4,5\\
{-}\tfrac{1}{2}, & i=6,\\
{-}1, & i=7,8.
\end{cases}
\end{equation*}
By the discussion of $\textsc{DIIIa}$ we know that there is no root $\beta$ of the form ${\pm}e_{i}{\pm}e_{j}$
with $\alpha_{3}{+}\beta\,{\in}\,\Qc^{-n}_{\;\phiup_{3}}\,{\cap}\,\sigmaup(\Qc^{-n}_{\;\phiup_{3}})$.
The negative $\zetaup$-roots in $\Qc_{\;\phiup_{3}}^{r}$ are $({-}\zetaup_{\emptyset})$ and 
$({-}\zetaup_{i,6})$ with $1{\leq}i{\leq}5$. \par
We have $\sigmaup({-}\zetaup_{\emptyset})\,{=}\,({-}\zetaup_{\emptyset})$,
$\sigmaup({-}\zetaup_{2i-1,6})\,{=}\,({-}\,\zetaup_{2i,5})$,  
$\sigmaup({-}\zetaup_{2i,6})\,{=}\,({-}\,\zetaup_{2i-1,5})$, for $i\,{=}\,1,2$ and
$\sigmaup({-}\zetaup_{5,6})\,{=}\,({-}\,\zetaup_{5,6})$. 
Thus the $\zetaup$-roots in 
$\sigmaup(\Qc^{r}_{\;\phiup_{3}})\,{\cap}\,\Qc^{-n}_{\;\phiup_{3}}$ are $({-}\zetaup_{i,5})$ 
for $i\,{=}\,1,2,3,4$. We have $\alpha_{5}{+}({-}\zetaup_{i,5})\,{=}\,({-}\zetaup_{i,6})\,{\in}\,\Qc^{r}_{\;\phiup_{5}}$
for all $i\,{=}\,1,2,3,4$ and hence $k_{\;\phiup_{5}}^{\sigmaup}(\alpha_{5})=2$, showing that
$(\gs,\qt_{\;\phiup_{5}})$ has Levi order $2$.
\par \medskip
\noindent
$\boxed{\textsc{$\gs$ of type\,\, EVII}}$ This is a real form of $\textsc{E}_{7}$.
We keep the notation introduced above for $\textsc{EVI}$.
\par 
The symmetry  yielding $\textsc{EVII}$ is defined by 
\begin{equation*} \sigmaup(e_{i})=
\begin{cases}
\;\, e_{i},\; i=1,2,7,8,\\
\;\,{-}e_{i},\; i=3,4,5,6.
\end{cases}
\end{equation*}
The corresponding Satake diagram is \par\medskip
 \begin{equation*}
  \xymatrix @M=0pt @R=4pt @!C=20pt{
 \alpha_{1} &\alpha_{2}&\alpha_{3}&\alpha_{4}&\alpha_{5}\\
  \medcirc\ar@{-}[r]&\medcirc\ar@{-}[r]
  &\medbullet\ar@{-}[r]
  &\medbullet\ar@{-}[r]\ar@{-}[dddd]
  &\medbullet \\
  \\
  \\
  \\
  &&&\medbullet\ar@{-}[dddd] &\!\!\!\!\!\!\!\!\!\!\!\!\!\!\!\!\!\!\!\!\!\!\!\!\!\!\!\!\alpha_{6}
  \\
  \\
  \\
  \\
 & &&\medcirc&\!\!\!\!\!\!\!\!\!\!\!\!\!\!\!\!\!\!\!\!\!\!\!\!\!\!\!\!\alpha_{7}
  }
\end{equation*}
\par\medskip
The fundamental minimal parabolic $(\gs,\qt_{\phiup})$ correspond to
$$\phiup\,{\subseteq}\,\Bz_{\bullet}^{\sigmaup}\,{=}\,\{\alpha_{3},\alpha_{4},\alpha_{5},\alpha_{6}\}$$ 
and 
are finitely nondegenerate if $|\phiup|{\leq}2$ and $\phiup\,{\neq}\,\{\alpha_{4},\alpha_{5}\}$.
By the discussion of $\textsc{E\,IV}$, we know that in the cases listed above 
$(\gs,\qt_{\phiup})$  has Levi order $1$ for $\phiup\,{\neq}\,\{\alpha_{4}\}$. \par
Let $\phiup\,{=}\,\phiup_{4}\,{=}\,\{\alpha_{4}\}$. Then $\Qc_{\phiup_{4}}\,{=}\,\{\alpha\,{\in}\,\Rad\,{\mid}\,
\xiup(\alpha){\geq}0\}$ with 
\begin{equation*}
 \xiup(e_{i})= 
\begin{cases}
 \;\,1, & i=1,2,3,4,\\
 \;\,0, & i=5,6\\
{-}2, & i=7,8.
\end{cases}
\end{equation*}
We have 
\begin{equation*}
 \Qc_{\phiup_{4}}^{n}\cap\Rad^{\sigmaup}_{\;\bullet}=\{e_{3}{+}e_{4},e_{3}{\pm}e_{5},e_{3}{\pm}e_{6},e_{4}{\pm}e_{5},
 e_{4}{\pm}e_{6}\}.
\end{equation*}
Since $\xiup(e_{3}{+}e_{4})\,{=}\,2$, there is no root $\beta$ of the form ${\pm}e_{i}{\pm}e_{j}$ in $\Rad$
for which $(e_{3}{+}e_{4}){+}\beta\,{\in}\,\Qc^{-n}_{\;\phiup_{4}}$. The $\zetaup$-roots
in $\Qc^{r}_{\phiup_{4}}$ are ${\pm}\zetaup_{\emptyset}$ and ${\pm}\zetaup_{5,6}$.
We have $\sigmaup({-}\zetaup_{\emptyset})=\zetaup_{1,2,7,8}$ and $\sigmaup({-}\zetaup_{5,6})\,{=}\,{-}\,\zetaup_{3,4}$.
Since 
\begin{equation*}
 (e_{3}{+}e_{4})+\zetaup_{1,2,7,8}={-}\zetaup_{5,6}\,{\in}\,\Qc_{\;\phiup_{4}}^{r}\;\;\text{and}\;\;
 (e_{3}{+}e_{4})+({-}\zetaup_{3,4})={-}\zetaup_{\emptyset}\,{\in}\,\Qc_{\;\phiup_{4}}^{r}, 
\end{equation*}
we obtain that $k_{\phiup_{4}}^{\sigmaup}(e_{3}{+}e_{4})\,{=}\,2$ and therefore
$(\gs,\qt_{\phiup_{4}})$ has Levi order $2$.
\par \medskip
\noindent
$\boxed{\textsc{$\gs$ of type\,\, E\,{IX}}}$ This is a real form of $\textsc{E}_{8}$.
Let us take
the root system of type $\textsc{E}_{8}$ and the basis of simple roots consisting of  
\begin{equation} 
\begin{cases}
 \Rad=\{{\pm}\zetaup_{\emptyset}\}\cup
 \{{\pm}(e_{i}{\pm}e_{j}),{\pm}\zetaup_{i,j},{\mid}{1{\leq}i{<}j{\leq}8}\}
 {\cup}\{\zetaup_{i,j,h,\ell}{\mid} 1{\leq}i{<}j{<}h{<}\ell{\leq}8\}\\
 \Bz=\{\alpha_{i}{=}e_{i}{-}e_{i+1}\,{\mid}\,1{\leq}i{\leq}6\}\cup\{\alpha_{7}{=}e_{6}{+}e_{7}\}\cup
 \{\alpha_{8}=\zetaup_{\emptyset}\}.
\end{cases}
\end{equation}
\par 
The symmetry  yielding $\textsc{E\,{IX}}$ is defined by 
\begin{equation*} \sigmaup(e_{i})=
\begin{cases}
\;\, e_{i},\; i=1,2,3,8,\\
\;\,{-}e_{i},\; i=4,5,6,7.
\end{cases}
\end{equation*}
The corresponding Satake diagram is \par\medskip
 \begin{equation*}
  \xymatrix @M=0pt @R=4pt @!C=20pt{
 \alpha_{1} &\alpha_{2}&\alpha_{3}&\alpha_{4}&\alpha_{5}&\alpha_{6}\\
  \medcirc\ar@{-}[r]&\medcirc\ar@{-}[r]
  &\medcirc\ar@{-}[r]
  &\medbullet\ar@{-}[r]
  &\medbullet\ar@{-}[r]\ar@{-}[dddd]
  &\medbullet \\
  \\
  \\
  \\
  &&&&\medbullet\ar@{-}[dddd] &\!\!\!\!\!\!\!\!\!\!\!\!\!\!\!\!\!\!\!\!\!\!\!\!\!\!\!\!\alpha_{7}
  \\
  \\
  \\
  \\
 && &&\medcirc&\!\!\!\!\!\!\!\!\!\!\!\!\!\!\!\!\!\!\!\!\!\!\!\!\!\!\!\!\alpha_{8}
  }
\end{equation*}
\par\medskip
The fundamental minimal parabolic $(\gs,\qt_{\phiup})$ correspond to
$$\phiup\,{\subseteq}\,\Bz_{\bullet}^{\sigmaup}\,{=}\,\{\alpha_{4},\alpha_{5},\alpha_{6},\alpha_{7}\}$$ 
and 
are finitely nondegenerate if $|\phiup|{\leq}2$ and $\phiup\,{\neq}\,\{\alpha_{5},\alpha_{6}\}$.
By the discussion of $\textsc{EIV}$, we know that in the cases listed above 
$(\gs,\qt_{\phiup})$ has Levi order $1$ for $\phiup\,{\neq}\,\{\alpha_{5}\}$,
while the same argument employed in that case shows that $(\gs,\qt_{\{\alpha_{5}\}})$
has Levi order $2$.
\par \medskip
\noindent
\par \medskip
\noindent
$\boxed{\textsc{$\gs$ of type\,\, F\,{II}}}$ This is a real form of a root system of type $\textsc{F}$.
We take the root system and basis of simple positive roots defined by 
\begin{equation*} 
\begin{cases}
 \Rad=\{{\pm}e_{i}\,{\mid}\,1{\leq}i{\leq}4\}\cup\{{\pm}e_{i}{\pm}e_{j}\,{\mid}\,1{\leq}i{<}j{\leq}4\}\cup
 \{\tfrac{1}{2}({\pm}e_{1}{\pm}e_{2}{\pm}e_{3}{\pm}e_{4})\},\\
 \Bz=\{\alpha_{1}{=}e_{1}{-}e_{2},\,\alpha_{2}{=}e_{2}{-}e_{3},\,\alpha_{3}{=}e_{3},\;\alpha_{4}{=}\tfrac{1}{2}
 (e_{4}{-}e_{1}{-}e_{2}{-}e_{3})\}
\end{cases}
\end{equation*}
and the symmetry defining $\textsc{F\,{II}}$ is 
\begin{equation*}
 \sigmaup(e_{i})=\begin{cases}
 {-}e_{i}, & i{=}1,2,3,\\
 \;\, e_{4}, & i{=}4.
 \end{cases}
\end{equation*}
The corresponding Satake diagram is 
\begin{equation*}
 \xymatrix @M=0pt @R=2pt @!C=15pt{ \alpha_{1}&\alpha_{2}&\alpha_{3}&\alpha_{4}\\
  \qquad\quad 
 \\
 \medbullet\ar@{-}[r]
 &\medbullet\ar@{=>}[r]
 &\medbullet\ar@{-}[r]
 &\medcirc}
 \end{equation*}
 The fundamental CR algebras $(\gs,\qt_{\;\phiup})$ are those with $\phiup\,{\subseteq}\,\Bz_{\bullet}^{\sigmaup}\,{=}\,
 \{\alpha_{1},\alpha_{2},\alpha_{3}\}$, which are finitely nondegenerate iff $|\phiup|\,{=}\,1$. \par\smallskip
 If $\phiup\,{=}\,\phiup_{1}\,{=}\,\{\alpha_{1}\}$, then $\Qc_{\;\phiup_{1}}\,{=}\,\{\alpha\,{\in}\,\Rad\,{\mid}\,
 \xiup_{1}(\alpha)\,{\geq}\,0\},$ with 
\begin{equation*}
 \xiup_{1}(e_{i})= 
\begin{cases}
1, & i=1,4,\\
0, & i=2,3.
\end{cases}
\end{equation*}
We have $\Qc_{\;\phiup_{1}}^{n}\,{\cap}\,\Rad_{\;\bullet}^{\sigmaup}\,{=}\,\{e_{1}\}\,{\cup}\,\{e_{1}{\pm}e_{i}\,{\mid}\,i=2,3\}$.\par
Since
$({-}e_{1}{-}e_{4})\,{=}\,\sigmaup(e_{1}{-}e_{4})\,{\in}\,\sigmaup(\Qc_{\;\phiup_{1}}^{r})\,{\cap}\,\Qc^{-n}_{\;\phiup_{1}}$,
\begin{equation*} \begin{cases}
 e_{1}+({-}e_{1}{-}e_{4})={-}e_{4}\in \Qc^{-n}_{\;\phiup_{1}}\cap\sigmaup(\Qc^{-n}_{\;\phiup_{1}}),\\
 e_{1}{\pm}e_{i}+({-}e_{1}{-}e_{4})={\pm}e_{i}{-}e_{4}\in \Qc^{-n}_{\;\phiup_{1}}\cap\sigmaup(\Qc^{-n}_{\;\phiup_{1}}),\;
\text{for $i=2,3$,}
 \end{cases}
\end{equation*}
shows that $(\gs,\qt_{\;\phiup_{1}})$  has Levi order $1$.
\par\smallskip
 If $\phiup\,{=}\,\phiup_{2}\,{=}\,\{\alpha_{2}\}$, then $\Qc_{\;\phiup_{2}}\,{=}\,\{\alpha\,{\in}\,\Rad\,{\mid}\,
 \xiup_{2}(\alpha)\,{\geq}\,0\},$ with 
\begin{equation*}
 \xiup_{1}(e_{i})= 
\begin{cases}
1, & i=1,2,\\
0, & i=3,\\
2, & i=4.
\end{cases}
\end{equation*}
We have $\Qc_{\;\phiup_{1}}^{n}\,{\cap}\,\Rad_{\;\bullet}^{\sigmaup}\,{=}\,\{e_{1},e_{2},e_{1}{\pm}e_{3},e_{2}{\pm}e_{3}\}$.
\par
Since $\tfrac{1}{2}({\pm}e_{3}-e_{1}{-}e_{2}{-}e_{4})\,{=}\,\sigmaup(\tfrac{1}{2}(e_{1}{+}e_{2}{\mp}e_{3}{-}e_{4})
\,{\in}\,\sigmaup(\Qc_{\;\phiup_{2}}^{r})\,{\cap}\,\Qc^{-n}_{\;\phiup_2}$  and 
\begin{equation*} 
\begin{cases}
 e_{1}{+}\tfrac{1}{2}(e_{3}-e_{1}{-}e_{2}{-}e_{4})=\tfrac{1}{2}(e_{3}{+}e_{1}{-}e_{2}-e_{4})\in \Qc^{-n}_{\;\phiup_{2}}\cap\sigmaup(\Qc^{-n}_{\;\phiup_{2}}),\\
 e_{2}{+}\tfrac{1}{2}(e_{3}-e_{1}{-}e_{2}{-}e_{4})=\tfrac{1}{2}(e_{3}{-}e_{1}{+}e_{2}-e_{4})\in \Qc^{-n}_{\;\phiup_{2}}\cap\sigmaup(\Qc^{-n}_{\;\phiup_{2}}),\\
  (e_{1}{\pm}e_{3})+\tfrac{1}{2}({\mp}e_{3}-e_{1}{-}e_{2}{-}e_{4})=
  \tfrac{1}{2}({\pm}e_{3}{+}e_{1}{-}e_{2}-e_{4})\in \Qc^{-n}_{\;\phiup_{2}}\cap\sigmaup(\Qc^{-n}_{\;\phiup_{2}}),\\
    (e_{2}{\pm}e_{3})+\tfrac{1}{2}({\mp}e_{3}-e_{1}{-}e_{2}{-}e_{4})=
  \tfrac{1}{2}({\pm}e_{3}{-}e_{1}{+}e_{2}-e_{4})\in \Qc^{-n}_{\;\phiup_{2}}\cap\sigmaup(\Qc^{-n}_{\;\phiup_{2}}),
\end{cases}
 \end{equation*}
 $(\gs,\qt_{\;\phiup_{2}})$  has Levi order $1$.\par

\par\smallskip
 If $\phiup\,{=}\,\phiup_{3}\,{=}\,\{\alpha_{3}\}$, then $\Qc_{\;\phiup_{3}}\,{=}\,\{\alpha\,{\in}\,\Rad\,{\mid}\,
 \xiup_{3}(\alpha)\,{\geq}\,0\},$ with 
\begin{equation*}
 \xiup_{1}(e_{i})= 
\begin{cases}
1, & i=1,2,3\\
3, & i=4.
\end{cases}
\end{equation*}
We have $\Qc_{\;\phiup_{1}}^{n}\,{\cap}\,\Rad_{\;\bullet}^{\sigmaup}\,{=}\,\{e_{1},e_{2},e_{3}\}\,{\cup}\,
\{e_{i}{+}e_{j}\,{\mid}\,1{\leq}i{<}j{\leq}3\}$.
\par
Since $(-\tfrac{1}{2}(e_{1}{+}e_{2}{+}e_{3}{+}e_{4}))=\sigmaup(\tfrac{1}{2}((e_{1}{+}e_{2}{+}e_{3}{-}e_{4})))
\in\sigmaup(\Qc_{\;\phiup_{3}}^{r})\,{\cap}\,\Qc^{-n}_{\;\phiup_{3}}$ and 
\begin{equation*} 
\begin{cases}
 e_{1}{+}\tfrac{1}{2}(-e_{1}{-}e_{2}{-}e_{3}{-}e_{4})=\tfrac{1}{2}(e_{1}{-}e_{2}{-}e_{3}-e_{4})\in \Qc^{-n}_{\;\phiup_{3}}\cap\sigmaup(\Qc^{-n}_{\;\phiup_{3}}),\\
 e_{2}{+}\tfrac{1}{2}(-e_{1}{-}e_{2}{-}e_{3}{-}e_{4})=\tfrac{1}{2}({-}e_{1}{+}e_{2}{-}e_{3}-e_{4})\in \Qc^{-n}_{\;\phiup_{3}}\cap\sigmaup(\Qc^{-n}_{\;\phiup_{3}}),\\
  e_{3}{+}\tfrac{1}{2}(-e_{1}{-}e_{2}{-}e_{3}{-}e_{4})=\tfrac{1}{2}({-}e_{1}{-}e_{2}{+}e_{3}-e_{4})\in \Qc^{-n}_{\;\phiup_{3}}\cap\sigmaup(\Qc^{-n}_{\;\phiup_{3}}),\\
\begin{aligned}
(e_{i}{+}e_{j})  {+}\tfrac{1}{2}(-e_{1}{-}e_{2}{-}e_{3}{-}e_{4})=\tfrac{1}{2}(e_{i}{+}e_{j}{-}e_{\ell}{-}e_{4})
\in \Qc^{-n}_{\;\phiup_{3}}\cap\sigmaup(\Qc^{-n}_{\;\phiup_{3}})\quad \\
\text{if $\{i,j,\ell\}=\{1,2,3\}$.}
\end{aligned}
\end{cases}
 \end{equation*}
it follows $(\gs,\qt_{\;\phiup_{3}})$  has Levi order $1$.
\end{proof}
\vfill

\par\bigskip

\begin{table}[H]

  \centering
  \begin{adjustbox}{max width=1.1\textwidth}
  \begin{tabular}{|c|m{5.5em}|m{10.7em}|m{9.4em}|m{8.2em}|}
 
  \hline

  Name & $\,\,\,\,\,\,\,\,\,\,\,\,\gt^{\sigmaup}$& $\,\,\,\,\,\,\,\,\,\,\,\,\,\,\,\,\,\,\,\,\,\,\,\,\,\,\,\mathcal{S}$& 
Levi order 1& 
  Levi order 2 \\ 
   \hline
  AII&$\mathfrak{sl}(\mathbb{H},p),$    $\ell{=}2p{-}1$  & \dynkin[labels={\alpha_1,\alpha_2,\alpha_3,\alpha_{\ell-1},
\alpha_{\ell}},scale=1.8]A{II}& 
 $\Bz_{\bullet}{ \supset}\phiup$
 , $|\phiup|>1.$
 &
  $\Bz_{\bullet}{ \supset}\phiup{\neq}\{\alpha_{1}\},\{\alpha_\ell\}, \,\,\,\,\,\,\, $ $|\phiup|=1.\,\,
 \,\,$  \\
  \hline
   AIIIa%
    & $\mathfrak{su}(p,q)\,\,\,$ $p{+}q{=}\ell{+}1$, 
 $2{\leq} p{\leq}\frac{\ell}{2}$
    &\dynkin[labels={\alpha_1,,\alpha_p,,,,,\alpha_{\ell-p+1},,\alpha_
    \ell},scale=2.0]A{IIIa}
    & 
 \begin{minipage}{5cm}
    $\phiup{\cap}\Bz_{\circ}{\neq}\emptyset$,
   \\
    $\epsilon_C(\phiup{\cap}\Bz_{\circ}){\cap}(\phiup{\cap}
    \Bz_{\circ}){=}\emptyset$,
\\$\epsilon_C(\phiup^{\circ}(\alpha)){\cap}\phiup{\neq}
    \emptyset$, \\$\forall\alpha{\in}\phiup{\cap}\Bz_{\circ}$,
    
    $|\phiup{\cap}\Bz_\bullet|{\leq} 2$,\\   $\textit{ if }|\phiup{\cap}\Bz_\bullet|{=}2\\ 
    \Rightarrow\phiup{\cap}\{\alpha_{p},\alpha_{\ell-p+1}\}{=}\emptyset$
\end{minipage}
& $\phiup{=}\{\alpha_i\}, \,\,\, \,\,\,\,\,$ $p {<} i {<}\ell{-}p{+}1$;
   $\phiup{=}\{\alpha_i,\alpha_j\}, $ 
   $ \,\, p {<} i{<}j{<}\ell{-}p{+}1.$  
  
\\ 
\hline
   AIIIb%
    & 
$\mathfrak{su}(p,p)$ $2{\leq}p{=}\frac{(\ell{+}1)}{2}$ 

    &\dynkin[labels={\alpha_1,,,\alpha_p,,,\alpha_\ell},scale=2.0]A{IIIb}
    & 
 \begin{minipage}{5cm}
    $\phiup{\neq}\emptyset$,  \\$\epsilon_C(\phiup){\cap}\phiup{=}
    \emptyset$,  \\$\epsilon_C(\phiup^{\circ}(\alpha)){\cap}\phiup{\neq}
    \emptyset$, \\$\forall\alpha{\in}\phiup.$
    
\end{minipage}
&$\emptyset$
\\ 
\hline
   AIV%
   &
$\mathfrak{su}(1,\ell)$  
    &\dynkin[labels={\alpha_1,,,\alpha_\ell},scale=2.0]A{IV}
    & 
 \begin{minipage}{5cm}
    $\phiup{=}\{\alpha_1\};$ $\phiup{=}\{\alpha_\ell\};\\ $
  $\phiup{=}\{\alpha_1,\alpha_i\}$,  $\phiup{=}\{\alpha_i,\alpha_\ell\}\\ $
 $1{<}i{<}\ell.$
\end{minipage}
& 
$\phiup{=}\{\alpha_i\}, \,\,\,\,\,\,\,\,\,\,\,\,\,\,\,\,\,$  $1 {<} i {<}\ell;$ 
 $\phiup{=}\{\alpha_i,\alpha_j\},\,\,\,\,\,\,\,\,\,\,\,\,\,\,\,\,\,$ $1 {<} i{<}j{<}\ell.$  \\ 
  \hline
   BI
    & $\mathfrak{so}(p,{2}\ell{+}1{-}{p})$\,\, $2{\leq} p{\leq} \ell$ &\dynkin[labels={\alpha_1,,\alpha_p,\alpha_{p+1},,\alpha_\ell},scale=1.8]B{I}
    & 
 \begin{minipage}{5cm}
    $\phiup{=}\{\alpha_{p+1}\}.$
\end{minipage}
&$
  \phiup=\{\alpha_i\},$ $p{+}1{<} i{\leq}\ell.$
  
\\ 
  \hline

  BII
   &$\mathfrak{so}(1,{2\ell})$ &\dynkin[labels={\alpha_1,\alpha_2,,\alpha_\ell},scale=2.0]B{II}
    & 
 \begin{minipage}{5cm}
   $\phiup{=}\{\alpha_{2}\}.$
\end{minipage}
& 
    $\phiup=\{\alpha_i\},\,\, 2< i \leq \ell.$  
  \\ 
  \hline

 CIIa%
    &
$\mathfrak{sp}(p,{\ell-p})$ $2p{<}\ell$     
     &\dynkin[labels={\alpha_1,,,\alpha_{2p},,,\alpha_\ell},scale=1.7]C{IIa}
    & 
 \begin{minipage}{5cm}

   $\phiup{=}\{\alpha_{2i-1}\},1{\leq} i{\leq} p;$
 $\phiup{=}\{\alpha_{2i_1{-}1},{\dots},\alpha_{2i_{k}{-}1}, \alpha_j\}$,
$k{\geq} 1, 1 {\leq} i_{1}{<}{\dots}{<}i_{k}{<}p$, 
$2i_k{-}1{<} j,\,\alpha_j{\in}\Bz_{\bullet}.$

\end{minipage}
&
$ \phiup{=}\{\alpha_j\},   j{>}2p.$\\ 
  \hline
 CIIb%
    &
$\mathfrak{sp}(p,p)$, $2p=\ell$         
     &\dynkin[labels={\alpha_1,,,,,\alpha_{\ell}},scale=2.0]C{IIb}
    & 
 \begin{minipage}{5cm}
  $
 \phiup{=}\{\alpha_{2i_1-1},{\dots}\alpha_{2i_{k}-1}\}, \\$  $ k{\geq}1, $
 $
 \,\, \,1 {\leq} i_{1}{<}{\dots}{<}i_{k}{\leq}p.$\\ 
\end{minipage}
& 

$\emptyset$\\ 
 \hline
 DIa
   &
$\mathfrak{so}(p,2\ell{-}p)$ $2\leq p\leq \ell-2$.   
   
    &\dynkin[labels={\alpha_1,,\alpha_{p},\alpha_{p+1},,\alpha_{\ell{-}1},
    \alpha_{\ell}},scale=1.8]D{Ia}
    & 
 \begin{minipage}{5cm}
   $\phiup{=}\{\alpha_{p+1}\}$. 
\end{minipage}
& 
    $
  \phiup{=}\{\alpha_i\},\,\, p{+}1 {<} i {\leq}\ell;\,\,\,\,\,\,\,\,\,\,\,\,\,\,\,\,\,\,\,$
  $\phiup{=}\{\alpha_{\ell-1},\alpha_{\ell}\}.\,\, $ \\
 
 \hline
 DIb
    &
$\mathfrak{so}(\ell{-}1,\ell{+}1)$      
     &\dynkin[labels={\alpha_1,,\alpha_{\ell-1},\alpha_{\ell}},scale=2.0]D{Ib}
    & 
 \begin{minipage}{5cm}
   $\phiup{=}\{\alpha_{\ell-1}\}$;
    $\phiup{=}\{\alpha_{\ell}\}.$
    
\end{minipage}
  & 
 \begin{minipage}{5cm}
 $\emptyset$
   \end{minipage}
\\
 \hline
 DII
    & 
$\mathfrak{so}({1,2\ell{-}1})$ 
    &\dynkin[labels={\alpha_1,,,\alpha_{\ell-1},\alpha_{\ell}},scale=2.0]D{II}
    & 
 \begin{minipage}{5cm}
   $\phiup{=}\{\alpha_{2}\}$.
\end{minipage}
& 
    $\phiup{=}\{\alpha_i\},\,\, 2 {<} i {\leq}\ell;\,\,\,\,\,\,\,\,\,\,\,\,\,\,\,\,\,\,\,\,\,\,\,\,
    $
  $\phiup{=}\{\alpha_{\ell-1},\alpha_\ell\}.\,\, \,\,\,\,\,\,\,\,\,\,\,\,\,\,\,\,$      
  \\ 

 \hline
    DIIIa  &
    $\mathfrak{su}^{*}_{2p}(\Hb)$,  $\ell{=}2p$ 
    &
\dynkin[labels={\alpha_1,,,,\alpha_{\ell-1},\alpha_{\ell}},scale=1.8]D{IIIa}    
&  $
  \phiup{=}\{\alpha_{2i_1-1},\dots,\alpha_{2i_k-1}\},$
  $
  1 {\leq} i_{1}{<}\dots{<}i_{k}{\leq}p, k{>}1.\,\, 
$   &
    $
  \phiup{=}\{\alpha_{2i-1}\},$
  $
  1 {\leq} i{\leq}p.\,\, 
$   
  
\\ 

\hline
 DIIIb&
 $\mathfrak{su}^{*}_{2p+1}(\Hb)$,  $\ell{=}2p{+}1$ 
    &\dynkin[labels={\alpha_1,,,,,\alpha_{\ell-1},
    \alpha_{\ell}},scale=2.0]D{IIIb}
    & 
 \begin{minipage}{5cm}
   $\phiup{\cap}\Bz_{\circ}{=}\{\emptyset\},\{\alpha_{\ell-1}\},\{\alpha_{\ell}\}$
   $\\\wedge$\\
   $\phiup{\cap}\Bz_{\bullet}{=}\{\alpha_{2i_1-1},\dots,\alpha_{2i_k-1}\},\,$
  $
 k{>} 1,  {\leq} i_{1}{<}\dots{<}i_{k}{\leq}p.\,\, 
 $ 
\end{minipage}
& 
    $
  \phiup{=}\{\alpha_{2i-1}\},\,$
  $
  1 {\leq} i{\leq}p.\,\, 
 $ 
\\ 

  \hline

  \hline
\end{tabular}
\end{adjustbox}
\medskip
 \caption{\label{finite nnndg} List of all finitely  nondegenerate 
  CR algebras $(\gt^{\sigmaup},\qt_{\phiup})$ of closed orbits with classical simple $\gt$.}
\end{table}

\begin{center}
\begin{table}[H]
  \begin{adjustbox}{max width=\textwidth}
\begin{tabular}{ | m{2.2em} | m{11em} | m{15em} |  m{6.8em} | } 
  \hline
  Name & $\,\,\,\,\,\,\,\,\,\,\,\,\,\,\,\,\,\,\,\,\,\,\,\,\,\,\,\,\,\mathcal{S}$&  Levi order 1&  Levi order 2\\ 
  \hline
 
  EII
    & \dynkin[labels={\alpha_1,\alpha_2,\alpha_3,\alpha_4,\alpha_5,
    \alpha_6},scale=2.0]E{II}
	& 
   $\phiup{=}\{\alpha_1\},\{\alpha_3\},\{\alpha_5\},\{\alpha_6\},$ 
    $\{\alpha_1,\alpha_5\},\{\alpha_3,\alpha_6\}$.
& 
    $\emptyset$    
    
      \\ 
    \hline

 EIII 
    &\dynkin[labels={\alpha_1,\alpha_6,\alpha_2,\alpha_3,\alpha_4,
    \alpha_5},scale=2.0]E{III}
    & 
 \begin{minipage}{6.5cm}
 $\phiup{=}\{\alpha_1\}, $  $\{\alpha_1,\alpha_2\}, $ 
    $\{\alpha_1,\alpha_3\},$ $ \{\alpha_1, \alpha_4\}, $  
    $\{\alpha_1,\alpha_2,\alpha_4\}, $ $\{\alpha_1,\alpha_3,\alpha_4\},$  
      $ \{\alpha_5\},  $  $ \{\alpha_5,\alpha_2\}, $
       $ \{\alpha_5,\alpha_4\}, $   $\{\alpha_5,\alpha_3\}, $ 
       $ \{\alpha_5,\alpha_2,\alpha_4\}, $  $ \{\alpha_5,\alpha_2,\alpha_3\},$
       $\{\alpha_2\},\{\alpha_3\},\{\alpha_4,\}.$
    $\{\alpha_2,\alpha_3\},$ $\{\alpha_2,\alpha_4\},$
    $\{\alpha_3,\alpha_4\},$ $\{\alpha_2,\alpha_3,\alpha_4\}.$
    \end{minipage}
& 
   $\emptyset$
\\ 
  \hline
  EIV 
    & \dynkin[labels={\alpha_1,\alpha_4,\alpha_2,\alpha_3,\alpha_5,
    \alpha_6},scale=2.0]E{IV}
& 
$\phiup{\subseteq}\Rad_{\bullet}$,
$|\phiup|{\leq}2$,
   $\phiup\,{\neq}\{\alpha_3\},\{\alpha_{3},\alpha_{4}\}.$ 
& 
    $\phiup=\{\alpha_3\}.$

    \\ 
  \hline
  EVI 
    & \dynkin[labels={\alpha_7,\alpha_5,\alpha_6,\alpha_4,\alpha_3,
    \alpha_2, \alpha_1},scale=2.0]E{VI}
& 
   $\phiup{\subseteq}\Rad_{\bullet},  |\phiup|=2,3.$
& 
    $\phiup{=}\{\alpha_1\},\{\alpha_3\},\{\alpha_5\}.$    
    
      \\ 
    \hline
   EVII   &\dynkin[labels={\alpha_7,\alpha_5,\alpha_6,\alpha_4,
  \alpha_3,\alpha_2,\alpha_1},scale=2.0]E{VII}
& 
$\phiup{\subseteq}\Rad_{\bullet}$,
$|\phiup|{\leq}2$,
   $\phiup\,{\neq}\{\alpha_4\},\{\alpha_{4},\alpha_{5}\}.$ 
& 
    $\phiup=\{\alpha_4\}.$

       \\ 
    \hline
    EIX  &
\dynkin[labels={\alpha_8,\alpha_6,\alpha_7,\alpha_5,\alpha_4,\alpha_3,
\alpha_2,\alpha_1},scale=2.0]E{IX}     
& 
$\phiup{\subseteq}\Rad_{\bullet}$,
$|\phiup|{\leq}2$,
   $\phiup\,{\neq}\{\alpha_5\},\{\alpha_{5},\alpha_{6}\}.$ 
& 
    $\phiup=\{\alpha_5\}.$          
 
    \\
    \hline
    \medskip

   FII& 
\dynkin[labels={\alpha_1,\alpha_2,\alpha_3,\alpha_4},scale=2.0]F{II}    
&$\phiup=\{\alpha_{1}\},\{\alpha_{2}\},\{\alpha_{3}\}.$
& $\emptyset$
 \\ 
  \hline
  
\end{tabular}
\end{adjustbox}
\smallskip
  \caption{\label{finite nnndg ex} List of all finitely  nondegenerate 
  CR   algebras $(\gt^{\sigmaup},\qt_{\phiup})$ of closed orbits with exceptional simple $\gt$.}
\end{table}
\end{center}

\subsection*{Further remarks: the complex type cases}
In Tables \ref{finite nnndg}, \ref{finite nnndg ex}  we considered simple real Lie algebras 
$\gt^{\sigmaup}$ having a simple
complexification $\gt$ (\textit{real type}). 
Our approach  
generalizes these approach to \emph{complex type} simple real algebras,
i.e. simple complex Lie algebras considered as real Lie algebras by restriction of the field of scalars. 
The complexification $\gt$ of a $\gt^{\sigmaup}$ of the complex type is the direct sum
of two compies of $\gs$. 
Having fixed any real form of the underlying complex Lie algebra $\gs_{\C}$ 
of $\gs$,
an 
involution 
$\sigmaup$ defining the real form $\gt^{\sigmaup}$ can be defined by 
using the corresponding conjugation of $\gs_{\C}$ and setting
\begin{equation}
 \gt\simeq\gs_{\C}\oplus\gs_{\C}\ni(Z,W)\longrightarrow (\bar{W},\bar{Z})\in\gs_{\C}\oplus\gs_{\C}\simeq\gt.
\end{equation}

Since the root system of $\gt$ does not contain imaginary roots, by
the criteria of Lemma~ \ref{p5.8} the parabolic CR algebras $(\gs,\qt_{\phiup})$ with $\gs$ of the complex type
are either holomorphically degenerate or $1$-nondegenerate.\par
A criterion of finitely nondegeneracy was given in
\cite[Thm.11.5]{AlMeNa2006} in terms of cross-marked Satake diagrams. 
Those corresponding to minimal orbits consist of two copies of the Dynkin diagram of
$\gs_{\C}$, with homologous simple roots joined by a curved arrow. We denote by $\epsilonup$
the symmetry exchanging homologous roots of $\Bz$. \par
As above, the minimal parabolic CR algebras have the form $(\gs,\qt_{\phiup})$ for a subset $\phiup$
of the set $\Bz$ of positive simple roots.  \par
If $\alpha\,{\in}\,\phiup$, we denote by
${\phiup}^{\circ}(\alpha)$ the connected component of $\alpha$ in $(\Bz{\setminus}\Phi){\cup}\{\alpha\}$.
\begin{prop}
 Let $(\gs,\qt_{\phiup})$ be a minimal parabolic CR algebra, with a simple $\gs$ of the complex type. The following
 are equivalent 
\begin{enumerate}
 \item $(\gs,\qt_{\phiup})$ is finitely nondegenerate;
 \item $(\gs,\qt_{\phiup})$ is Levi $1$-nondegenerate;
 \item $\forall\alpha\,{\in}\,\phiup$, we have 
$
 \epsilonup({\phiup}^{\circ}(\alpha))\cap\phiup\neq\emptyset.
$ \qed
\end{enumerate}
\end{prop}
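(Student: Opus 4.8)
The plan is to split the equivalence into the easy part $(1)\Leftrightarrow(2)$ and the substantive part $(1)\Leftrightarrow(3)$, exploiting throughout the special shape of the complex type: since $\gt\simeq\gs_{\C}\oplus\gs_{\C}$, its root system $\Rad$ has no imaginary roots, so $\Rad_{\;\bullet}^{\sigmaup}=\emptyset$; by \eqref{sigmaup} the conjugation $\sigmaup$ acts on $\Rad$ exactly as the Satake symmetry $\epsilonup$ (which coincides with the $\epsilon_{C}$ of Propositions~\ref{p5.8} and \ref{t5.10}); and the Satake diagram $\mathcal{S}$ is the disjoint union of two copies of the connected Dynkin diagram of $\gs_{\C}$, joined only by the curved $\epsilonup$-arrows. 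In particular $\epsilonup$ interchanges the two copies, every root of $\Rad$ is supported inside one copy, and for $\alpha\in\Bz$ the set $\phiup^{\circ}(\alpha)$ (and $\psiup^{\circ}(\alpha)$) lies in the copy of $\alpha$ while $\epsilonup(\phiup^{\circ}(\alpha))$ lies in the other. The equivalence $(1)\Leftrightarrow(2)$ is then immediate from Proposition~\ref{p4.10}: since $\Qc^{n}_{\;\phiup}\cap\Rad_{\;\bullet}^{\sigmaup}=\emptyset$, any finitely nondegenerate $(\gs,\qt_{\phiup})$ has Levi order $1$.

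For $(1)\Leftrightarrow(3)$ I would specialize the fibration criterion of Remark~\ref{rmk.6.6.}, which characterizes holomorphic degeneracy of a closed orbit in terms of the existence of a proper $\psiup\subsetneq\phiup$ such that $(\gs,\qt_{\phiup})\to(\gs,\qt_{\psiup})$ is a $\Gf^{\sigmaup}$-equivariant CR fibration with totally complex typical fibre; unlike Theorem~\ref{t5.10}, this criterion needs no fundamentality hypothesis, which is what lets the argument also cover the non-fundamental but finitely nondegenerate complex-type algebras. By Proposition~\ref{p5.8}, with $\Rad_{\;\bullet}^{\sigmaup}=\emptyset$, the CR-fibration condition reduces to ``$\epsilonup(\psiup^{\circ}(\alpha))\cap\psiup=\emptyset$ for all $\alpha\in\phiup\setminus\psiup$''; by Proposition~\ref{p5.5} total complexity of the fibre reduces to $\Qc_{\;\psiup}\subseteq\Qc_{\;\phiup}\cup\sigmaup(\Qc_{\;\phiup})$. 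A short computation then shows that a root lies in $\Qc_{\;\psiup}\setminus\Qc_{\;\phiup}$ only if it is negative, that a negative root $\beta$ lies there exactly when $\supp(\beta)\cap\psiup=\emptyset$ and $\supp(\beta)\cap(\phiup\setminus\psiup)\neq\emptyset$, and that for such a $\beta$ one has $\beta\in\sigmaup(\Qc_{\;\phiup})$ iff $\supp(\beta)\cap\epsilonup(\phiup)=\emptyset$; so the fibre is totally complex precisely when every connected $S\subseteq\Bz$ that is the support of a root, is disjoint from $\psiup$, and meets $\phiup\setminus\psiup$ is also disjoint from $\epsilonup(\phiup)$.

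With this reformulation the two implications are short. If $(3)$ fails, say $\epsilonup(\phiup^{\circ}(\alpha_{0}))\cap\phiup=\emptyset$, one takes $\psiup=\phiup\setminus\{\alpha_{0}\}$: then $\psiup^{\circ}(\alpha_{0})=\phiup^{\circ}(\alpha_{0})$ yields the CR-fibration condition, while every negative $\beta\in\Qc_{\;\psiup}\setminus\Qc_{\;\phiup}$ has connected support containing $\alpha_{0}$ and disjoint from $\phiup\setminus\{\alpha_{0}\}$, hence $\supp(\beta)\subseteq\phiup^{\circ}(\alpha_{0})$, hence disjoint from $\epsilonup(\phiup)$; so $(\gs,\qt_{\phiup})$ is holomorphically degenerate. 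Conversely, given such a $\psiup$ and any $\alpha_{0}\in\phiup\setminus\psiup$, the CR-fibration condition and $\phiup^{\circ}(\alpha_{0})\subseteq\psiup^{\circ}(\alpha_{0})$ give $\epsilonup(\phiup^{\circ}(\alpha_{0}))\cap\psiup=\emptyset$; and if $\epsilonup(\phiup^{\circ}(\alpha_{0}))$ met $\phiup\setminus\psiup$ at some $\epsilonup(\beta_{0})$ with $\beta_{0}\in\phiup^{\circ}(\alpha_{0})$, then the highest root $\mu$ of the subsystem generated by a connected $S\subseteq\phiup^{\circ}(\alpha_{0})$ containing $\{\alpha_{0},\beta_{0}\}$ would give $\beta=-\mu\in\Qc_{\;\psiup}\setminus\Qc_{\;\phiup}$ with $\beta_{0}\in\supp(\beta)\cap\epsilonup(\phiup)$, against total complexity of the fibre; hence $\epsilonup(\phiup^{\circ}(\alpha_{0}))\cap\phiup=\emptyset$ and $(3)$ fails.

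The step I expect to need the most care is the combinatorial bookkeeping — in particular the input, used to build $\beta=-\mu$, that every connected subset of one of the two copies of $\Bz$ is the support of a root (the highest root of the reductive subsystem it spans) — together with the separate treatment of the degenerate instances $\psiup=\emptyset$, i.e.\ of the totally complex (and totally real) $(\gs,\qt_{\phiup})$, where one invokes Theorem~\ref{t3.6}(2) to see that $(1)$ and $(3)$ fail in tandem. The overall line of reasoning parallels the proof of \cite[Thm.\,11.5]{AlMeNa2006} but is substantially shorter because $\Rad_{\;\bullet}^{\sigmaup}$ is empty and the two Dynkin components are unlinked.
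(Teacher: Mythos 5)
Your proof is correct in substance and follows the route the paper intends but does not write out: the Proposition carries a \verb|\qed| in the statement, the implicit justification being (a) $\Rad_{\;\bullet}^{\sigmaup}=\emptyset$ forces Levi order $1$ for any finitely nondegenerate case, and (b) Theorem~\ref{t5.10} specialized to $\Rad_{\;\bullet}^{\sigmaup}=\emptyset$, whose condition $(2)$ collapses to exactly the negation of item $(3)$. What you do differently is refuse to cite Theorem~\ref{t5.10} directly and instead re-derive its complex-type case from Remark~\ref{rmk.6.6.} together with Propositions~\ref{p5.5} and~\ref{p5.8}. This buys something real: Theorem~\ref{t5.10} is stated only for \emph{fundamental}, non--totally-complex CR algebras, so a literal citation does not cover the $\phiup$ with $\phiup\cap\epsilonup(\phiup)\neq\emptyset$; your computations of $\Qc_{\;\psiup}\setminus\Qc_{\;\phiup}$, of the totally-complex-fibre condition in terms of supports, and the construction of $\beta=-\mu$ from a connected subset of one Dynkin component are all correct and close that gap. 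The cost is that you lean on the ``if and only if'' of Remark~\ref{rmk.6.6.}, whose hard direction (finite degeneracy implies the existence of such a $\psiup$) the paper itself only asserts, as a restatement of \cite[Th.\,11.5]{AlMeNa2006}; you are essentially unfolding the same proof one level down rather than taking a genuinely independent path.

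One concrete slip in your closing remarks: you assert that in the totally real instances ($\emptyset\neq\phiup=\epsilonup(\phiup)$) conditions $(1)$ and $(3)$ ``fail in tandem.'' They do not: for such $\phiup$ one has $\epsilonup(\alpha)\in\epsilonup(\phiup^{\circ}(\alpha))\cap\phiup$ for every $\alpha\in\phiup$, so $(3)$ \emph{holds}, while the CR dimension is zero and ``$1$-nondegenerate'' in the sense of Definition~\ref{knndg} cannot be satisfied. This is a boundary case the Proposition itself glosses over (under the criterion \eqref{e2.6} the totally real algebras are vacuously nondegenerate), so it does not undermine your main argument, but the parenthetical claim as written is wrong and should either be dropped or replaced by an explicit exclusion of the totally real $\phiup$.
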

\begin{exam} \label{ex.2}
 The $CR$ algebra  $(\gt^{\sigmaup},\qt_{\phiup})$
 described by the cross-marked Satake diagram 
\begin{equation*} 
    \xymatrix@R=-.15pc{
     \!\!\medcirc\!\!\ar@{-}[r]\ar@{<->}@/_1pc/[ddd]
&\!\!\medcirc\!\! \ar@{-}[r]\ar@{<->}@/_1pc/[ddd]
&\!\!\medcirc\!\!\ar@{<->}@/_1pc/[ddd]\ar@{-}[r]
&\!\!\medcirc\!\!\ar@{<->}@/^1pc/[ddd]\ar@{-}[r]
&\!\!\medcirc\!\! \ar@{-}[r]\ar@{<->}@/^1pc/[ddd]
&\!\!\medcirc\!\! \ar@{<->}@/^1pc/[ddd]\\
\times&&&\times&\times\\
\quad\\
\!\!\medcirc\!\!\ar@{-}[r]
&\!\!\medcirc\!\! \ar@{-}[r]
&\!\!\medcirc\!\!\ar@{-}[r]
&\!\!\medcirc\!\!\ar@{-}[r]
&\!\!\medcirc\!\!\ar@{-}[r]
&\!\!\medcirc\!\! \\
  &\times&\times&&&\times}
\end{equation*}
 with $\gt^{\sigmaup}\,{\simeq}\,\slt_{7}(\C),$ 
is fundamental and $1$-nondegenerate by  criteria of $(2)$ Lemma~\ref{p5.8}.
\end{exam} 
\appendix\section{Elemetary conjugation diagrams}
 We conclude presenting a table,  taken from explicit computations contained in \cite[pp.18--21]{Ar1962},  that summarize all roots conjugation rules in all different subgraph types.
\begin{center}
\begin{table}[H]
  \begin{adjustbox}{max width=0.9\textwidth}
\begin{tabular}{ | m{2.2em} | m{10em} | m{15.2em} |  } 
\hline
 Type & $\,\,\,\,\,\,\,\,\,\,\,\,\,\,\,\,\,\,\,\,\,\,\,\,\,\,\mathcal{S}$ &$\,\,\,\,\,\,\,\,\,\,\,\,\,\,\,\,\,\,\,\,\,\,\,\,\,\,\,\,\,\,\,\,\,\sigmaup$\\ 
  \hline
 $A_1$ &  \,\,\,\,\,\,\,\,\,\,\,\,\,\,\,\,\,\,\,\,\,\, \dynkin[labels={\alpha_1},scale=2.0] A{o} &$\sigmaup (\alpha_1)=\alpha_1$\\
  \hline
  $A_1{\times} A_1$ & 
  \,\,\,\,\,\,\,\,\,\,\,
  \dynkin[labels={\alpha_1},scale=2.0] A{o}
 \tikz [<->] \draw (0,0) -- (1,0);
   \dynkin[labels={\alpha_2},scale=2.0] A{o}

  &$\sigmaup (\alpha_1)=\alpha_2$\\ 
  \hline
   $A_3$ & \,\,\,\,\,\,\,\,\,\,\,\,\,\,\dynkin[labels={\alpha_1,\alpha_2,\alpha_3},scale=2.0] A{*o*}
   &$\sigmaup (\alpha_2)=\alpha_1+\alpha_2+\alpha_3$\\ 
  \hline
   $A_{\ell}$ &

\dynkin[%
scale=2.0,
labels*={\alpha_1,\alpha_2,\alpha_3,\alpha_{\ell-2},\alpha_{\ell-1},\alpha_{\ell}},
involutions={16}]A{o**.**o}
   
   &$\sigmaup (\alpha_1)=\alpha_2+\dots+\alpha_{\ell}\,\,\,\,\,\,\,\,\,\,\,\,\,\,\,\,\,\,\,\,$
   
   \\ 
  \hline
   $B_{\ell}$ &
\dynkin[%
scale=2.0,
labels*={\alpha_1,\alpha_2,\alpha_3,\alpha_{\ell-2},\alpha_{\ell-1},\alpha_{\ell}}]B{o**.***}   
   
    &$\sigmaup (\alpha_1){=}\alpha_1{+}2(\alpha_2{+}\dots{+}\alpha_{\ell})$\\ 
  \hline 
  $C_{\ell}$ & 
\dynkin[%
scale=2.0,
labels*={\alpha_1,\alpha_2,\alpha_3,\alpha_{\ell-2},\alpha_{\ell-1},\alpha_{\ell}}]C{*o*.***}     
  
  &$\sigmaup (\alpha_2){=}\alpha_1{+}\alpha_2{+}2(\alpha_3{+}\dots{+}\alpha_{\ell-1})$ ${+}\alpha_{\ell}$\\ 
  \hline 
  $D_{\ell}$ & 
\dynkin[%
scale=2.0,
labels*={\alpha_1,\alpha_2,\alpha_3,\alpha_{\ell-2},\alpha_{\ell-1},\alpha_{\ell}}]D{o**.***}      
  &$\sigmaup (\alpha_1){=}\alpha_1{+}2(\alpha_3{+}\dots{+}\alpha_{\ell-2})$ ${+}\alpha_{\ell-1}{+}\alpha_{\ell}$\\ 
  \hline
   $F_4$ & 
    \,\,\,\,\,\,\,\,\,\,\,\,
\dynkin[%
scale=2.0,
labels*={\alpha_1,\alpha_2,\alpha_3,\alpha_4}]F{***o}         
   
   &$\sigmaup (\alpha_4)=\alpha_1+2\alpha_2+3\alpha_3+\alpha_4$\\ 
  \hline
  
\end{tabular}
\end{adjustbox}
\smallskip
  \caption{\label{Araki} Elementary conjugation diagrams.}
\end{table}
\end{center}

\bibliographystyle{amsplain}
\renewcommand{\MR}[1]{}
\bibliography{homog}

\end{document}